\title
{Phragm{\'e}n's and Thiele's election methods}
\date{27 November, 2016; 
revised 25 April, 2017 and 28 September, 2018.}
\author{Svante Janson}
\address{Department of Mathematics, Uppsala University, PO Box 480,
SE-751~06 Uppsala, Sweden}
\email{svante.janson@math.uu.se}
\newcommand\urladdrx[1]{{\urladdr{\def~{{\tiny$\sim$}}#1}}}
\subjclass[2010]{} 
\numberwithin{equation}{section}
\renewcommand\le{\leqslant}
\renewcommand\ge{\geqslant}
\newtheorem{thxmetod}{}
\newenvironment{metod}[1]%
{\begin{thxmetod}}{\end{thxmetod}}
\newenvironment{metodx}%
{\par\smallskip\em\noindent\ignorespaces}{\par\smallskip}
\newenvironment{metod0}%
{\par\em}{\par}
\theoremstyle{plain}
\newtheorem{theorem}{Theorem}[section]
\newtheorem{corollary}[theorem]{Corollary}
\theoremstyle{definition}
\newtheorem{example}[theorem]{Example}
\newtheorem{remark}[theorem]{Remark}
\newtheorem*{ack}{Acknowledgement}
\theoremstyle{remark}
\newenvironment{romenumerate}[1][-10pt]{
\addtolength{\leftmargini}{#1}\begin{enumerate}
 \renewcommand{\labelenumi}{\textup{(\roman{enumi})}}%
 \renewcommand{\theenumi}{\textup{(\roman{enumi})}}%
 }{\end{enumerate}}
\newcounter{oldenumi}
{\setcounter{oldenumi}{\value{enumi}}
\begin{romenumerate} \setcounter{enumi}{\value{oldenumi}}}
{\end{romenumerate}}
\newenvironment{PXenumerate}[1]{
\begin{enumerate}
 \renewcommand{\labelenumi}{\textup{(#1\arabic{enumi})}}%
 \renewcommand{\theenumi}{\labelenumi}%
 }{\end{enumerate}}
\newenvironment{PXenumerateq}[1]{%
\setcounter{oldenumi}{\value{enumi}}
\begin{enumerate} \setcounter{enumi}{\value{oldenumi}}
 \renewcommand{\labelenumi}{\textup{(#1\arabic{enumi})}}%
 \renewcommand{\theenumi}{\labelenumi}%
 }{\end{enumerate}}
\newenvironment{alphenumerate}[1][0pt]{
\addtolength{\leftmargini}{#1}\begin{enumerate}
 \renewcommand{\labelenumi}{\textup{(\alph{enumi})}}%
 \renewcommand{\theenumi}{\textup{\alph{enumi}}}%
 }{\end{enumerate}}
\newcounter{thmenumerate}
\newcounter{xenumerate}   
\newenvironment{val}[1][0pt]{
\addtolength{\leftmargini}{#1}
\begin{itemize}%
 }{\end{itemize}}
\newenvironment{itemizex}[1][-10pt]{
\addtolength{\leftmargini}{#1}
\begin{itemize}%
 }{\end{itemize}}
\newenvironment{xquote}[2]%
{\list{}{\rightmargin#2\leftmargin#1}\item[]}
{\endlist }
\newcommand\xfootnote[1]{\unskip\footnote{#1}$ $} 
\newcommand{\refT}[1]{Theorem~\ref{#1}}
\newcommand{\refC}[1]{Corollary~\ref{#1}}
\newcommand{\refR}[1]{Remark~\ref{#1}}
\newcommand{\refS}[1]{Section~\ref{#1}}
\newcommand{\refSS}[1]{Section~\ref{#1}}
\newcommand{\refE}[1]{Example~\ref{#1}}
\newcommand{\refFn}[1]{Footnote~\ref{#1}}
\newcommand{\refApp}[1]{Appendix~\ref{#1}}
\newcommand\REM[1]{{\raggedright\texttt{[#1]}\par\marginal{XXX}}}
\xdef\klockan{\the\count1.0\the\count255}
\xdef\klockan{\the\count1.\the\count255}\fi
\newcommand\set[1]{\ensuremath{\{#1\}}}
\newcommand\bigpar[1]{\bigl(#1\bigr)}
\newcommand\Bigpar[1]{\Bigl(#1\Bigr)}
\newcommand\xcpar[1]{\{#1\}}
\newcommand\bigabs[1]{\bigl|#1\bigr|}
\newcommand\Bigabs[1]{\Bigl|#1\Bigr|}
\def\rompar(#1){\textup(#1\textup)}    
\def\xexp(#1){e^{#1}}
\newcommand\ceil[1]{\lceil#1\rceil}
\newcommand\floor[1]{\lfloor#1\rfloor}
\newcommand\lrfloor[1]{\left\lfloor#1\right\rfloor}
\newcommand\punkt{.\spacefactor=1000}    
\newcommand\ie{i.e\punkt}
\newcommand\eg{e.g\punkt}
\newcommand\viz{viz\punkt}
\newcommand\cf{cf\punkt}
\newcounter{CC}
\newcounter{cc}
\newcommand\ga{\alpha}
\newcommand\gb{\beta}
\newcommand\gam{\gamma}
\newcommand\gG{\Gamma}
\newcommand\gs{\sigma}
\renewcommand\phi{\xxx}  
\newcommand\cA{\mathcal A}
\newcommand\cC{\mathcal C}
\newcommand\cE{\mathcal E}
\newcommand\cS{{\mathcal S}}
\newcommand\cV{\mathcal V}
\newcommand\ett[1]{\boldsymbol1\xcpar{#1}}
\newcommand\qw{^{-1}}
\newcommand\qww{^{-2}}
\newcommand\phragmen{Phrag\-m{\'e}n}
\newcommand\andrae{Andr\ae}
\newcommand\StL{Sainte-Lagu\"e}
\newcommand\gsi{_{\gs\ni i}}
\newcommand\sumpii{\sum\gsi}
\newcommand\xx[1]{^{(#1)}}
\newcommand\tie{\relax}
\newcommand\subb[1]{\ensuremath{{}_{#1}}}
\newcommand\jfr{reduced vote}
\newcommand\EA{Elections Act}
\newcommand\sss[1]{${}_{#1}$}
\newcommand\bxi{\bar\xi}
\newcommand\fx{\hat f}
\newcommand\rmA{\textrm{A}}
\newcommand\rmB{\textrm{B}}
\newcommand\rmC{\textrm{C}}
\newcommand\rmD{\textrm{D}}
\newcommand\rmE{\textrm{E}}
\newcommand\rmx[1]{\textrm{#1}}
\newcommand\tA{\hat t_\rmA}
\newcommand\hcE{\widehat\cE}
\newcommand\MM{s}
\newcommand\ellx{k}
\newcommand\qEJR{\textit{EJR}}
\newcommand\qPJR{\textit{PJR}}
\newcommand\qJR{\textit{JR}}
\newcommand\VV{V}
\newcommand\opt{optimization}
\newcommand\Opt{Optimization}
\newcommand\optal{\opt{al}}
\newcommand\citex[1]{\texttt{[#1]}}
\newcommand\xref[1]{\texttt{(#1)}}
\begin{document}

\begin{abstract} 
The election methods introduced in 1894--1895 by Phragmén and Thiele, and
their somewhat later versions for ordered (ranked) ballots,
are discussed in detail. The paper includes definitions and examples and 
discussion of whether the methods satisfy some properties, including
monotonicity, consistency and various proportionality criteria.
The relation with STV is also discussed.
The paper also contains historical information on the methods.
\end{abstract}

\maketitle

\begingroup
\renewcommand\footnote[1]{\relax}
\tableofcontents
\endgroup

\section{Introduction}\label{S:intro}

The purpose of this paper is to give a detailed presentation in English
of the election methods by Edvard Phragmén 
\cite{Phragmen1894, Phragmen1895, Phragmen1896,Phragmen1899}
and Thorvald Thiele \cite{Thiele}, originally
proposed in 1894 and 1895, respectively; we show also some properties of them.
(The presentation is to a large extent based on my discussion in Swedish in 
\cite[Chapters 13 and 14]{SJV6}.)
Both methods were originally proposed for unordered ballots (see below), but
ordered 
versions were later developed, so we shall  consider four different methods.
We also briefly discuss some other, related, 
methods suggested by \citet{Thiele}.

Versions of both Phragmén's and Thiele's methods have been used in Swedish
parliamentary elections (for distribution of seats within parties), and
Phragmén's method is still part of the election law, although in a minor
role, while Thiele's method is used for some elections in \eg{} city councils,
see \refApp{Ahistory}. 

Brief biographies of \phragmen{} and Thiele are given in \refApp{Abio}.

\subsection{Background}

The problem that \phragmen's and Thiele's methods try to solve is that of
electing a set of a given number $\MM$ of persons from a larger set
of candidates.
\phragmen{} and Thiele discussed this in the context of a parliamentary
election in a multi-member constituency; the same problem can, of course,
also occur in local elections, but also in many other situations such as
electing a board or a committee in an organization.

One simple method that has been used for centuries is the 
\emph{Block Vote}, where each voter votes for $\MM$ candidates, and the
$\MM$ candidates with the largest number of votes win. 
(See further \refApp{ABV}.) 
In the late 19th century, when 
(in Sweden)
political parties started to became organized,
it was evident that the Block Vote tends to give all seats to the largest party,
and as a consequence there was much discussion about
more proportional election methods that give representation also to
minorities.

Many proportional election methods have been constructed.
Among them, the ones that dominate today are 
\emph{list  methods}, where each voter votes for a party, and then each
party is given a number of seats according to some algorithm,
see  \refApp{Alist}. An important example is 
\emph{D'Hondt's method} (\refApp{ADHondt}) proposed by D'Hondt
\cite{DHondt1878,DHondt1882} in 1878.
\phragmen{} and Thiele  were inspired by D'Hondt's method, 
and \phragmen{} \cite{Phragmen1894, Phragmen1895}
called his method a generalization of D'Hondt's method,
but they
did not want a list method. They wanted to keep the voting method of the
Block Vote, where each voter chooses a set of persons, arbitrarily chosen
from the available candidates, without any formal role for parties.
Thus, 
a voter could
select candidates based on their personal merits and views on different
questions, and perhaps combine candidates from different parties and
independents. 
(This frequently happened, see Examples \ref{E1893a} and \ref{E1893b} from
the general election 1893.)
Then, an algorithm more complicated than the simple Block Vote would give
the seats to candidates in a way that, hopefully, would give a proportional
representation to minorities.
As we shall see in \refS{Sparty}, 
both \phragmen's and Thiele's methods achieve this at least in the
special case when there are parties with different lists, and every voter
votes for one of the party lists; then both methods give the same result as
D'Hondt's method. However, the methods were designed to cope also with more
complicated cases, when two different voters may vote for partly the same
and partly different candidates.

\phragmen's and Thiele's  methods, especially the ordered versions, are thus
close in spirit to STV (\refApp{ASTV}), which also is a proportional
election method
where parties play no role, see \refS{SPhSTV} for a closer comparison.
(\phragmen{} knew about STV, at least in \andrae's version, 
and had  proposed a version of it before
developing his own method, see \refSS{SSPh1}.)

\subsection{Contents of the paper}

\phragmen's and Thiele's  election
methods are described in detail in Sections \ref{SPhru}--\ref{Sthio},
in both unordered (Sections \ref{SPhru}--\ref{Sdecap})
and ordered (Sections \ref{Sordered}--\ref{Sthio}) versions.
\refS{Sparty} shows that all the  methods reduce to D'Hondt's method
in the case of party lists. \refS{S1} treats the simple special case of
a single-member constituency, when only one candidate is elected.

\refS{Sex} contains a number of examples, many of them comparing the
methods; some of the examples are constructed to show
weak points of some method. (There are also examples in some other
sections.) 
Sections \ref{Smono2}--\ref{Sprop} discuss further properties of the methods
(monotonicity, consistency and proportionality), and \refS{SPhSTV} discusses
the relation
between \phragmen's method and STV.

Some variants of the methods are described in \refS{Svar}.

The purpose of this paper is not to advocate any particular method, but we
give a few conclusions in \refS{Sconclusions}.

The appendices contain further information, including biographies
and the history of the methods.
Furthermore,
\refApp{Aother} gives for the reader's convenience brief
descriptions of several
other election methods that are related to \phragmen's and Thiele's or occur
in the discussions.

\section{Assumptions and notation}\label{Snot}

For the election methods studied here, 
we assume, as discussed in the introduction, that each voter votes with a
ballot containing the names of one or several candidates. (Blank votes,
containing no candidates, may also be allowed, but in the methods treated
here they are simply ignored.) Parties, if they exist, have no formal role
and are completely ignored by the methods.

These election methods are of two different types, with different types of
ballots:

\begin{description}
\item [Unordered ballots]
The order of the names on a ballot does not matter.
In other words, 
each ballot is regarded as a set of names.
(Sometimes called \emph{approval ballots}, since the voter can be seen as
approving some of the candidates.)

\item [Ordered ballots]
The order of the names on a ballot matters.
Each ballot is an ordered list of names.
(Sometimes called \emph{ranked ballots}, or \emph{preferential voting}.)
\end{description}
Of course, the practical arrangemants may vary; for example,
the voter may write the names by hand, 
or there might be a printed or electronic list of all candidates where the
voter marks his choices by a tick (unordered ballots) or by 1, 2,
\dots in order of preference (ordered ballots).

In some election methods, there are restrictions on the number of candidates
on each ballot (see \refApp{Aotheru}--\ref{Aothero} for examples). 
We make no such assumptions for \phragmen's and
Thiele's methods; each voter can vote for an arbitrary number of candidates.
However, the unordered versions can be modified by allowing at most $\MM$ (the
number to be elected) candidates on each ballot (for philosophical reasons
or for practical convenience); this was for example
done in the version of Thiele's
method used in Sweden 1909--1922, see \refApp{Ahistory}.
\xfootnote{
I do not know whether \phragmen{} intended this restriction or not.
I cannot find anything stated explicitly about it in \phragmen's papers
\cite{Phragmen1894,Phragmen1895,Phragmen1899}, 
but all his examples are of this type.
Remember also that \phragmen{} intended his method as an alternative to the
then used Block Vote (\refApp{ABV}), where unordered ballots with this
restriction were used, and it is possible that he intended the same for
his method.
\citet{Thiele}, on the contrary, gives several examples with more names on
the ballots than the number elected.
}
For the ordered versions, there is no point in forbidding (or allowing) more
than $\MM$ names on each ballot, since only the $\MM$ first names can matter.
\xfootnote{
This does not hold for all methods using ordered ballots, for example neither
for STV (\refApp{ASTV}) nor for scoring rules (\refApp{Aborda}), as is
easily seen be considering the case $\MM=1$.
}

It is often convenient to consider the different types of ballots that
appear, and count the number of ballots of each type.
If $\ga$ is a type of ballot that appears (or might appear), let $v_\ga$
denote the number of ballots of that type, \ie, the number of voters
choosing exactly this ballot.
Furthermore, let $V=\sum_\ga v_\ga$ denote the total number of (valid) votes,
and let 
$p_\ga:=v_\ga/V$ be the proportion of the votes that are cast for $\ga$.

\begin{remark}\label{Rhomo}
    Many election methods are \emph{homogeneous}, meaning that the result
	depends only on the proportions $p_\ga$.
This includes \phragmen's and Thiele's methods.
\xfootnote{
Although homogeneity seems like a natural property, not all election methods
	used in practice are strictly homogeneous. 
In quota methods, see \eg{} \cite{Pukelsheim}, as well as in STV 
(\refApp{ASTV}),
a ``quota'' is calculated, and this is (perhaps by tradition) usually
rounded to an integer, 
see \cite[Section 5.8]{Pukelsheim} for several examples, meaning that the
election method is 
not  homogeneous.
Similarly, in \phragmen's method as used in Sweden, see
\refApp{APh-vallag},
rounding to two decimal places is specified for all intermediary
calculations;
again this means that the method is not strictly homogeneous.
However, in both cases the methods are 
asymptotically homogeneous, as the number of votes gets large,
and for practical purposes they can be regarded as homogeneous, at least for
public elections.
}
\end{remark}

\begin{remark}
It is possible to let different voters have different weights, which in
principle could be any positive real numbers. The only difference is that
$v_\ga$ now is the total weight of all voters choosing $\ga$, and that this
is a real number, not necessarily an integer.
\xfootnote{
This was the case in local elections in Sweden 1909--1918, when a voter
had 1--40 votes depending on income, and a modification of Thiele's method
was used, see \refApp{Ahistory}.
}
We leave the trivial modifications for this extension to the reader, and
continue to talk about numbers of votes.
\end{remark}

\begin{remark}
Every election method has to have provisions for the case that a tie occurs
between two or more candidates. Usually ties are resolved by lot, although
other rules are possible.
\xfootnote{Non-mathematical rules are occasionally used, for example giving
  preference to the oldest candidate. (Such rules have not been used in
  connection with \phragmen's or Thiele's methods as far as I know.)} 
\phragmen{} originally proposed a special rule for his method,
see \refApp{APhragmen1894}, but he seems to have dropped this later and we
shall do the same. We assume that ties are resolved by lot or by some other
rule, and we shall usually not comment on this.
\end{remark}

\subsection{Some notation}
We let throughout 
$\MM$ be the number of seats, \ie, the number of
candidates to be elected; we assume that $\MM$ is
fixed and determined before the election.
We use a variable such as $i$ for an unspecified candidate.
In examples, candidates are usually
denoted by capital letters A, B,\dots

The \emph{outcome} of the election is the set $\cE$ of elected candidates.
By assumption, $|\cE|=\MM$,
where $|\cE|$ denotes the  number of elements of the set $\cE$.
When there are ties, there may be several possible outcomes.

In the discussions below, 'candidate' and 'name' are synonymous. 
Similarly, we identify a voter and his/her ballot.

In numerical examples, $\doteq$ is used for decimal approximations
(correctly rounded).

\subsubsection{Unordered ballots}
In a system with unordered ballots, each ballot can be seen as a set of
candidates, so 
the different types of ballots are  subsets of the set of all candidates.
We denote such sets by $\gs$.
Note that candidate $i$ appears on a ballot of type $\gs$ if and only if $i\in
\gs$; hence, the total number of ballots containing $i$ is $\sum_{\gs\ni i}
v_{\gs}$. 

\subsubsection{Ordered ballots}
In a system with ordered ballots, 
the different types of ballots are  ordered list of some (or
all) candidates.
We denote such ordered lists by $\ga$.

\section{\phragmen's unordered method}\label{SPhru}
 
\subsection{\phragmen's formulation}\label{SPhru1}

\phragmen{} presented his method in a short note in 1894
\cite{Phragmen1894}, 
followed by further discussions, motivations and explanations in
\cite{Phragmen1895,Phragmen1896,Phragmen1899},  
see \refApp{APhragmen1894}.
His definition is as follows (in my words and with my notation).
\phragmen{} assumes
that the ballots are of the unordered type in \refS{Snot}, \ie, 
that each ballot contains a set of candidates,
without order and without other restrictions. 
A detailed example is given in \refSS{SSPhru-ex}; 
further examples are given in \refS{Sex}.

\begin{metod}{\phragmen's unordered method}
Assume that each ballot has some \emph{voting power} $t$; this number is
the same for all ballots and will be determined later. 
A candidate needs total voting power $1$ in order to be elected.
The voting power of a
ballot may be used by the candidates on that ballot, and it may
be divided among several of the candidates on the ballot. 
During the
procedure described below, some of the voting power of a ballot may be already
assigned to already elected candidates; the remaining voting power of the
ballot
is free.

The seats are distributed one by one.

For each seat, each remaining candidate
may use all the free voting power of each ballot that includes
the candidate. (I.e., the full voting power $t$ except for the voting power
already assigned from that ballot to candidates already elected.)
The ballot voting power $t$ 
that would give the candidate voting power $1$ is
computed, and the candidate requiring the smallest voting power $t$ is
elected. All free (\ie, unassigned) 
voting power on the ballots that contain the elected
candidate is assigned to that candidate, and these assignments remain fixed
throughout the election. 

The computations are then repeated for the next seat, for the remaining
candidates, and so on.
\end{metod}

Ties are, as said in \refS{Snot}, 
broken by lot, or by some other supplementary rule.
(See \refApp{APhragmen1894} for \phragmen's original suggestion.)

Note that the required voting power $t$ increases for each seat, except
possibly in the case of a tie.

We consider the method defined above in some more detail.
We begin with the first seat. 
Since candidate $i$ appears on $\sumpii v_\gs$ ballots,
the smallest voting power $t$ that makes it possible for $i$ to get the
seat, provided it gets all voting power from each available ballot, is thus 
$t_i=1/\sumpii v_\gs$. 
\phragmen's rule is that the seat is given to the candidate $i$ that
requires the 
smallest voting power $t_i$. 
Hence the first seat goes to the candidate appearing on the largest number
of ballots.

Suppose that the first seat goes to candidate $i$, and that this requires voting
power $t\xx1=t_i$. The ballots containing $i$ have thus all used voting power
$t\xx1$ for the election of $i$; this allocation will remain fixed forever.
We now increase the voting power $t$ of all ballots, noting that the ballots
containing $i$ only have $t-t\xx1$ free voting power
available for the remaining candidates.
We again calculate the smallest $t$ such that some candidate may be given total
voting power 1; we give this candidate, say $j$, 
the second seat and let $t\xx2$ be the
required voting power. Furthermore, on the ballots containing $j$, we
allocate all available voting power to the election of $j$.

We continue in the same way.
In general, suppose that $n\ge0$ seats have been allocated so far, and that this
requires voting power $t\xx n$. Suppose further that on each ballot for the set
$\gs$, an amount $r_\gs$ of the voting power already is used, with $0\le
r_\gs\le t\xx n$. If the voting power of each ballot is increased to $t\ge
t\xx n$, 
then each ballot for $\gs$ has thus a free voting power $t-r_\gs$, which 
can be used by any of the candidates on the ballot.
The voting power available for candidate $i$ is thus
\begin{equation}
  \sum\gsi v_\gs(t-r_\gs) = t\sum\gsi v_\gs -\sum\gsi v_\gs r_\gs
\end{equation}
and for this to be equal to 1 we need $t$ to be
\begin{equation}\label{ti}
t_i= \frac{1+\sum\gsi v_\gs r_\gs}{\sum\gsi v_\gs}.
\end{equation}
The next seat is then given to the candidate with the smallest $t_i$;
if this is candidate $i$, the required voting power  $t\xx{n+1}$ is thus $t_i$,
so $r_\gs$ is updated to 
\begin{equation}\label{r'}
r'_\gs:=t\xx{n+1}=t_i  = \frac{1+\sum\gsi v_\gs r_\gs}{\sum\gsi v_\gs}
\end{equation}
for each $\gs$ such that $i\in\gs$.
($r_\gs$ is unchanged for $\gs$ with $i\notin\gs$.)

These formulas give an algorithmic version of \phragmen's method.

\begin{remark}
  \label{Rload}
In \cite{Phragmen1899}, \phragmen{} describes the method in an equivalent
way  using the term \emph{load} instead of voting power;
the idea is that when a candidate is elected, the participating ballots
incur a total load of 1 unit, somehow distributed between them. 
The candidates are elected sequentially. In each round, the loads  are
distributed and the candidates are chosen such that the
maximum load of a 
ballot is as small as possible.
(The same description is used by \citet{Cassel}.)
This is also a useful formulation, and it will sometimes be used below.
\end{remark}

\begin{remark}
\citet{Phragmen1899} 
illustrates also the method by imagining the different groups of
ballots as represented by cylindrical vessels, with base area proportional to
the number of ballots in each group.
The already elected candidates are represented by a liquid that is fixed in
the vessels, and 
 the additional voting power required to
elect another candidate is represented by pouring 1 unit of a liquid into the
vessels representing a vote for that candidate, distributed among these
vessels such that the height of the liquid will be the same in all of them.
This is to be tried for each candidate;
the candidate that requires the smallest height is elected, 
and the corresponding amounts of liquid are added to the vessels and fixed
there.
\end{remark}

\begin{remark}\label{Rtime}
  Sometimes it is convenient to think of the voting power as increasing
  continuously with
  time; at time $t$ each ballot has voting power $t$. 
The voting power available to each candidate thus also increases with time,
and as soon as some candidate reaches voting power 1, this candidate is
elected
and the free voting power on each participating ballot is permanently
assigned to this candidate (which reduces the free voting power to 0 for
these ballots, and thus typically reduces the available voting power for
other candidates). This is repeated until $\MM$ candidates have been elected.
\end{remark}

\subsection{An equivalent formulation}\label{SPhru2}
The numbers $t_i$ above will in practice be very small, and it is often more
convenient 
to instead use $W_i:=1/t_i$.
We also let $q_\gs:=v_\gs r_\gs$; this is the total voting power allocated
so far from the ballots of type $\gs$, and can be interpreted as the
(fractional) number of seats already elected by these ballots; $q_\gs$ is
called the \emph{place number} of this group of ballots.
Note that $\sum_\gs q_\gs$ always
equals the number of candidates elected so far. 

This leads to the following algorithmic formulation. To see that it really
is equivalent to the formulation in \refS{SPhru1}, it suffices to note that with
$W_i=1/t_i$ and $q_\gs=v_\gs r_\gs$, \eqref{wi} below is the same as
\eqref{ti}, and the update rule \eqref{q'} is the same as \eqref{r'}.

\begin{metod}{\phragmen's unordered method, equivalent formulation}
Seats are given to candidates sequentially, until the desired number
have been elected. During the process,
each type of ballot, \ie,
each group of identical ballots, 
is given a 
\emph{place number}, which is a rational non-negative number that can be
interpreted as the fractional number of seats elected so far by these
ballots; the sum of the place numbers is always equal to the number of seats
already allocated. 
The place numbers are determined recursively and the seats
are allocated by the following rules:
\begin{romenumerate}
\item 
Initially all place numbers are $0$.
\item \label{phrub}
Suppose that $n\ge0$ seats have been allocated.
Let $q_\gs$ denote the place number for the ballots with a set $\gs$
of candidates;
thus $\sum_\gs{q_\gs}=n$.
The total number of votes for candidate $i$ is $\sum_{\gs\ni i} v_\gs$, and 
the total place number of the ballots containing candidate $i$ is
$\sum_{\gs\ni i} q_\gs$. The \jfr\xfootnote{
The Swedish term is \emph{j\"amf\"orelsetal} (comparative figure).}
for candidate $i$ is defined as
\begin{equation}\label{wi}
  W_i:=\frac{\sum_{\gs\ni i} v_\gs}{1+\sum_{\gs\ni i} q_\gs},
\end{equation}
\ie, the total number of votes for the candidate divided by 
$1+$  their total
place number.

\item 
The next seat is given to the candidate $i$ that has the largest $W_i$.
\tie

\item \label{phrud}
Furthermore, if candidate $i$ gets the next seat, 
then the place numbers are updated
for all sets $\gs$ that participated in the election, \ie, the sets $\gs$
such that $i\in\gs$. For such $\gs$, the new place number is
\begin{equation}\label{q'}
 q'_\gs:=\frac{v_\gs}{W_i}
=
\Bigpar{1+\sum_{\gs\ni i} q_\gs}\frac{v_\gs}{\sum_{\gs\ni i} v_\gs}.
\end{equation}  
For $\gs$ such that $\gs\not\ni i$, $q'_\gs:=q_\gs$.
\end{romenumerate}
Steps \ref{phrub}--\ref{phrud} are repeated as many times as desired.
\end{metod}

We  see that the number $W_i$ may be interpreted as the total number of
votes for candidate $i$, reduced according to the extent to which the ballots
containing $i$ already have successfully participated in the election of other
candidates. (For that reason, we call $W_i$ the ``\jfr'' above.)
Cf.\ D'Hondt's method, see \refApp{ADHondt}, which as said above,
\phragmen{} tried to generalize.

\begin{remark}\label{Rc1}
  Let  $W\xx n$ be the winning (\ie, largest) \jfr{} when the
$n$-th seat is filled.
Then, by \eqref{q'}, during
the calculations above,
the current place number $q_\gs$ is $v_\gs/W\xx \ell$,
if  the last time that some candidate on the ballot (\ie, in $\gs$)
was elected was in round $\ell$. (Provided any of them has been elected;
otherwise $q_\gs=0$; in this case we may define $\ell=0$ and $W\xx0:=\infty$.) 
\end{remark}

\begin{remark}\label{Rplace}
  It is in practice convenient to use place numbers $q_\gs$
defined for groups of identical  ballots as above,
but it is sometimes also useful to consider the place number of an
individual ballot; for a ballot of type $\gs$ this is $r_\gs=1/W\xx\ell $,
with $\ell$ as in \refR{Rc1}.
\end{remark}

\begin{remark}\label{Rsheppard}
  The equivalence with the formulation in \refSS{SPhru1} shows that 
\begin{equation}\label{Wntn}
W\xx n=1/t\xx n  
\end{equation}
with $t\xx n$ as in
\refSS{SPhru1}. (And $t\xx0:=0$.)
\end{remark}

\subsection{An example}\label{SSPhru-ex}
\citet{Phragmen1894,Phragmen1895,Phragmen1896} illustrates his (unordered)
method with
the following example (using slightly varying descriptions in the different
papers).  
We present detailed calculations (partly taken from \phragmen)
using both formulations above.

\begin{example}[\phragmen's unordered method] \label{EPhr1894}
\quad 

Unordered ballots. 3 seats. \phragmen's method.
\begin{val}
\item [1034]ABC
\item [519]PQR
\item [90]ABQ
\item [47]APQ
\end{val}
The total numbers of votes for each candidate are thus
\begin{val}
\item [A]1171
\item [B]1124
\item [C]1034
\item [P]566
\item [Q]656
\item [R]519.
\end{val}
Using the formulation of \phragmen's method in \refSS{SPhru1},
we see that the smallest voting power that gives some candidate command of 
voting power
1 is $t\xx1=1/1171\doteq0.000854$, which gives A voting power $1171/1171=1$. 
Hence A (which has the largest number of votes) is elected to the first seat. 

If now the voting
power is increased to $t>t\xx1$, then each ballot ABC, ABQ or APQ has free
voting power $t-t\xx1=t-1/1171$, while each ballot PQR has free voting power
$t$.
Hence, the voting power that each of the remaining candidates can use is
\begin{val}
\item [B:] $1124(t-t\xx1)$
\item [C:] $1034(t-t\xx1)$
\item [P:] $519t+47(t-t\xx1)=566t-47t\xx1$
\item [Q:] $519t+137(t-t\xx1)=656t-137t\xx1$
\item [R:] $519t$.
\end{val}
In order for these values to be equal to 1, we need for B the voting power
$t$ to be, see \eqref{ti},
$$
t_\rmB:=\frac{1+1124t\xx1}{1124}=t\xx1+\frac{1}{1124}
=\frac{2295}{1316204}
\doteq0.001744,
$$
and for Q a voting power
$$
t_\rmx{Q}:=\frac{1+137t\xx1}{656}
=t\xx1+\frac{1-519t\xx1}{656}
=\frac{327}{192044}
\doteq0.001703,
$$
while the remaining candidates obviously require more than either B or Q.
Since $t_\rmx{Q}<t_\rmB$, $Q$ is elected to the second seat, and
$t\xx2=t_\rmx{Q}$.

For the third seat, if the voting power of each ballot 
is increased further to $t>t\xx2$, then each ballot ABC has free voting
power $t-t\xx1$, and each ballot PQR, ABQ or APQ has free voting power
$t-t\xx2$.
Hence, the voting power that each remaining candidate can use is
\begin{val}
\item [B:] $1034(t-t\xx1)+90(t-t\xx2)=1124t-1034t\xx1-90t\xx2$
\item [C:] $1034(t-t\xx1)$
\item [P:] $566(t-t\xx2)$
\item [R:] $519(t-t\xx2)$.
\end{val}
In order for this to equal 1, B needs the voting power $t$ to be 
$$
t_\rmB:=\frac{1+1034t\xx1+90t\xx2}{1124}
=\frac{195525}{107928728}
\doteq0.001812
$$
while P needs $t$ to be
$$
t_\rmx{P}:=
\frac{1+566t\xx2}{566}=t\xx2+\frac{1}{566}=\frac{188563}{54348452}
\doteq0.003470
$$
while C and R obviously require more than B and Q, respectively.
Since $t_\rmB<t_\rmx{P}$, B is elected to the third seat.
(Also, $t\xx3=t_B$.)

Hence, the elected by \phragmen's method are AQB.

\smallskip
Using ``load'' instead of ``voting power'', see \refR{Rload}, we do the same
calculations; we now say that first A is elected which
gives a load $t\xx1=1/1171$ to each ballot
ABC, ABQ or APQ (total load $1171t\xx1=1$); then Q is elected which gives
a load $t\xx2=327/192044$ to each ballot PQR and an additional load
$t\xx2-t\xx1=163/192044$ to each ballot ABQ or APQ 
(total new load $519\cdot 327/192044 + 137\cdot163/192044=1$);
finally, B is elected which gives an additional load
$t\xx3-t\xx1=103357/107928728$ to each ballot ABC
and
$t\xx3-t\xx2=11751/107928728$ to each ballot ABQ
(total new load $1034\cdot103357/107928728+90\cdot11751/107928728=1$).
The final loads on the ballots of the four types are
$(t\xx3,t\xx2,t\xx3,t\xx2)$, where $t\xx2=327/192044\doteq0.001703$ and 
$t\xx3=195525/107928728\doteq 0.001812$.

\smallskip
Using the formulation with place 
numbers and \jfr{s} in
\refSS{SPhru2}, we obtain the same result by similar but somewhat different
calculations, cf.~\eqref{Wntn}.
For the first seat, the \jfr{s} $W_i$ are just the number of votes
for each candidate; hence A is elected with $W_\rmA=1171$. This gives a place
number $1/1171$ for each participating
ballot, and thus $1034/1171\doteq0.8830$ for all ballots ABC
together, 
$90/1171\doteq0.0769$ for the ballots ABQ and $47/1171\doteq0.0401$ 
for the ballots APQ.

Hence, for the second seat, the \jfr{s} for B and Q are, by \eqref{wi},
\begin{equation*}
  W_\rmB=\frac{1124}{1+1124/1171}=\frac{1316204}{2295}\doteq573.51
\end{equation*}
(since the 1124 ballots containing B have a combined place number $1124/1171$)
and
\begin{equation*}
  W_\rmx{Q}=\frac{656}{1+137/1171}=\frac{192044}{327}=587.29
\end{equation*}
(since the 656 ballots containing Q have a combined place number $137/1171$),
while the remaining candidates have smaller \jfr{s}
($W_\rmC\doteq549.12$, $W_\rmx P\doteq544.16$,
$W_\rmx R=519$).
Hence Q is elected to the second seat. The place numbers for the four groups
of ballots are
$\frac{1034}{W_\rmA}=\frac{1034}{1171}\doteq 0.8830$,
$\frac{519}{W_\rmx Q}=\frac{169713}{192044}\doteq0.8837$,
$\frac{90}{W_\rmx Q}=\frac{14715}{96022}\doteq0.1532$,
$\frac{47}{W_\rmx Q}=\frac{15369}{192044}\doteq0.0800$, with sum 2.

For the third seat, we have the \jfr{s}
$$
W_\rmB=\frac{1124}{1+1034/1171+14715/96022}=\frac{107928728}{195525}
\doteq551.99
$$
and
$$
W_\rmx{P}=\frac{566}{1+169713/192044+15369/192044}=\frac{54348452}{188563}
\doteq288.22
$$
while the remaining candidates have smaller \jfr{s}
($W_\rmC\doteq549.12$ as for the second seat,
$W_\rmx R\doteq275.52$). Hence B is elected to the third seat.
\end{example}

\section{Thiele's unordered methods}\label{Sthiu}

 \citet{Thiele} praised \phragmen's contribution \cite{Phragmen1894}, 
but proposed a different method based on a different idea.
Thiele realized that his idea led to a difficult
optimization problem that was not practical to solve, so he also proposed two
approximations of the method.
There are thus three different methods by Thiele for unordered ballots; 
we may call them
\emph{Thiele's \opt{} method},  
\emph{Thiele's addition method}
and \emph{Thiele's elimination method}, 
\xfootnote{In \citet{Thiele} (in Danish), the \opt{} method has no name,
  the addition 
  method is called \emph{Tilf{\o}jelsesreglen} and the elimination method
is called \emph{Udskydelsesreglen}; in his examples, there are also captions
in French, with the names \emph{règle d'addition} and \emph{règle de rejet}.

When the addition method was used in
Sweden 1909--1922, it was called \emph{Reduk\-tions\-regeln} (the Reduction
Rule). 
}
but
since only the addition method has found practical use, 
we mainly consider this method and
we often call the addition method simply 
\emph{Thiele's method}.
(The addition method was also the only of the methods that was considered in
the discussions in Sweden in the early 20th century, see \refApp{Ahistory}
and \eg{} \cite{Cassel}, \cite{bet1913}.)
The three methods are defined below.
A detailed example is given in \refSS{SSthiu-ex}; 
further examples are given in \refS{Sex}.

\subsection{Thiele's \opt{} method}\label{SSthiuopt}
Thiele's idea was that a voter that sees $n$ of the candidates on 
his\xfootnote
{In 1895, only men were allowed to vote, in Denmark as well as in Sweden.}
ballot elected, will feel a \emph{satisfaction} $f(n)$, for some increasing
function $f(n)$, and the result of the election should be the set of $\MM$
candidates that maximizes the total satisfaction, \ie, the sum of $f(n)$
over all voters.

In formulas, if a set $\cE$ of candidates is elected, a voter that has voted
for a set $\gs$ will feel a satisfaction $f(|\gs\cap\cE|)$.

As Thiele notes, we can without loss of generality assume $f(0)=0$ and
$f(1)=1$. We also let $w_n:=f(n)-f(n-1)$, the added satisfaction when a
voter sees the $n$-th candidate elected; thus 
\begin{equation}
  \label{fw}
f(n)=\sum_{k=1}^n w_k.
\end{equation}

Of course, the result of the method depends heavily on the choice of the
function $f(n)$. Thiele discusses this, and 
argues that the function depends on the purpose of the election. He claims
that for the election of a government or governing body, each new member is
as important as the first, so 
\begin{equation}\label{fstrong}
  f(n)=n
\end{equation}
(\ie, $w_n=1$); he calls this the \emph{strong} method. 
On the other hand, for the election
of a committee for comprehensive investigation of some issue, he sees no
point in having several persons of the same meaning, so he sets 
\begin{equation}\label{fweak}
  f(n)=\ett{n\ge1}:=
  \begin{cases}
	0,&n=0,
\\1,&n\ge1,
  \end{cases}
\end{equation}
(\ie, $w_1=1$, $w_n=0$ for $n\ge2$);
he calls this the \emph{weak} method.
Finally, Thiele claims that there are many cases between these two
extremes, in particular when electing representatives for a society, where
proportional representation is desired.
\xfootnote{
Whether this argument is convincing or not is perhaps for the reader to
decide. 
\citet{Tenow1912} argues that Thiele here
rather seems to evaluate the methods by the desirability of their outcome,
and that his ``satisfaction'' thus becomes a fiction and is chosen to
achieve the desired result.
However, while this philosophical question may be relevant for
applications of the methods, 
it is irrelevant for our main purpose, which is to present the
methods as algorithms and give some of their mathematical properties.
}

Thiele notes that the choice \eqref{fstrong}
gives the result that the $\MM$ candidates with the largest numbers of votes
will be elected; this is thus Approval Voting (\refApp{Aapproval}).
(If each ballot can contain at most $\MM$ names we obtain Block voting,
\refApp{ABV}.) 
Thiele  notes that this yields a method that is far from proportional.

On the other hand, Thiele notes that the choice 
\begin{equation}\label{f}
f(n)=1+\frac{1}2+\dots+\frac{1}n,
\end{equation}
(\ie, $w_n=1/n$), 
yields a proportional method in the case when no name appears on more than one
type of ballot, see \refS{Sparty}.
This is therefore Thiele's choice for his \emph{proportional} method.

In the sequel we shall use the choice \eqref{f}, 
except when we explicitly state otherwise. 
\xfootnote{
The strong version \eqref{fstrong} is as said above equivalent to
Approval Voting.

The weak version \eqref{fweak} seems to be more interesting
mathematically than for practical applications.
See \cite[Examples 5 and 6]{Thiele} for an example including
all three methods with the weak function \eqref{fweak},
in this case yielding different results.
} 
Thiele's \opt{} method for proportional elections is thus the following:

\begin{metod}{Thiele's \opt{} method}
  For each set $\cS$ of $\MM$ candidates, calculate the ``satisfaction''
  \begin{equation}
F(\cS):=\sum_\gs v_\gs f(|\gs\cap\cS|),	
  \end{equation}
where the function $f$ is given by \eqref{f}.
Elect the set $\cS$ of the given size
that maximizes $F(\cS)$.
\end{metod}

However, Thiele notes that the maximization over a large number of sets
$\cS$ is impractical. With $n$ candidates to the $\MM$ seats, there are  
$\binom n\MM$ sets $\cS$ that have to be considered, and as an example,
\citet{Thiele} mentions 30 candidates to 10 seats, 
  when there are more than 30 million combinations (30\;045\;015). 
\xfootnote{Using a concept not existing in Thiele's days, the
  problem of finding the maximizing 
  set(s) is NP-hard, see \cite[Theorem 1]{Aziz:computational}
or \cite[Theorem 3]{SkowronFL}.}
\citet{Thiele} thus for practical use proposes two approximation to his
\opt{} method, where candidates either are selected one by one (the
addition method), or eliminated one by one (the elimination method), in both
cases maximizing the total satisfication in each step.
These methods are described in detail in the following subsections.
\xfootnote{
\citet{Thiele} recommends using the elimination method, for the reason that
the addition method selects the elected sequentially, which might give the
first elected pretensions to be superior to their colleagues.
}
(Thiele was aware that the methods might give different results, and showed
this in
some of his examples; see Examples \ref{ETh}--\ref{ETh12} and
\refR{Rmono}.)

\begin{remark}
  Thiele's \opt{} method was reinvented by Simmons in 2001, 
under the name \emph{Proportional Approval Voting (PAV)}, see \cite{Kilgour}.
\end{remark}

\subsection{Thiele's addition method}\label{SSthiuadd}
Thiele's addition method is a ``greedy'' version of his \opt{} method,
where candidates are elected one by one, and for each seat, the candidate is
elected that maximizes the increase of the total satisfaction of the voters.
For a general satisfaction function $f$ satisfying \eqref{fw}, the
satisfication of a ballot $\gs$ containing a candidate $i$ is increased by
$w_{k+1}$ when $i$ is elected, where $k$ is the number of already elected on
this ballot.
This yields the following simple description, where we use $w_n=1/n$ to
obtain a proportional method. The general version is obtained by replacing
$1/(k+1)$ by some (arbitrary) numbers $w_{k+1}\ge0$.

\begin{metod}{Thiele's (addition) method}
Seats are given to candidates sequentially, until the desired number
have been elected.

For each seat, a ballot where $k\ge0$ names already have been
	elected is counted as $1/(k+1)$ votes for each remaining candidate on
	the ballot. The candidate with the largest vote count is elected.
\end{metod}

As said above, we usually call the method ``Thiele's method''.

\begin{remark}
  Thiele's addition method with the weak satisfaction function \eqref{fweak}
is studied in \citet{EJR} and is there called 
\emph{Greedy Approval Voting (GAV)}.
(In this case, each seat goes to the candidate with most votes among the
ballots that do not contain any already elected candidate.)
\end{remark}

\subsection{Thiele's elimination method}\label{SSthi-}

Thiele's elimination method works in the opposite direction. Weak candidates
are eliminated until only $\MM$ (the desired number) of them remain. Each
time, the 
candidate is eliminated that minimizes the decrease of the total satisfaction
of the voters caused by the elimination.
For a general satisfaction function $f$ satisfying \eqref{fw}, the
satisfication of a ballot $\gs$ with $k$ remaining candidates is decreased by
$w_{k}$ when one of them is eliminated.
This yields the following description, where again we use $w_n=1/n$ to
obtain a proportional method. The general version is obtained by replacing
$1/k$ by some (arbitrary) numbers $w_k\ge0$.

\begin{metod}{Thiele's elimination method}
Candidates are eliminated one by one, until only $\MM$ remain. The remaining
ones are elected.
In each elimination step, a ballot where $k\ge1$ names remain
 is counted as $1/k$ votes for each remaining candidate on
	the ballot. The candidate with the smallest vote count is eliminated.
\end{metod}

Note the (superficial?) similarity with the Equal and Even Cumulative Voting in
\refApp{ACV}, 
but note that here the calculation is iterated, with the vote counts
changing as candidates are eliminated.

\begin{remark}
Thiele's elimination method has recently been reinvented 
(under the name \emph{Harmonic Weighting})
as a method for ordering alternatives for display for the electronic voting
system \emph{LiquidFeedback}  \cite{LiquidFeedback}.
\end{remark}

\subsection{An example}\label{SSthiu-ex}
We illustrate Thiele's unordered
methods by the same example as was used to illustrate
\phragmen's method in \refE{EPhr1894}.

\begin{example}[Thiele's \opt{}, addition and elimination methods]  
\label{EPhr1894-Th}
\quad

Unordered ballots. 3 seats. Thiele's three  methods.
\begin{val}
\item [1034]ABC
\item [519]PQR
\item [90]ABQ
\item [47]APQ
\end{val}

With Thiele's \opt{} method (\refSS{SSthiuopt}),
we note first that in this example,
we have dominations $\rmA>\rmB>\rmC$ and $\rmx Q>\rmx P>\rmx R$, in the
sense that, for example, replacing A by B or C always decreases the
satisfaction of a set of candidates; hence
it suffices to consider the four possible outcomes
ABC, ABQ, APQ, PQR instead of all $\binom63=20$ possible sets of three
candidates. 
These sets yield the satisfactions
\begin{val}
\item [ABC:]$1034\cdot\frac{11}6+519\cdot0+90\cdot\frac{3}2+47\cdot1=6233/3
\doteq2077.67$
\item [ABQ:]$1034\cdot\frac32+519\cdot1+90\cdot\frac{11}6+47\cdot\frac32
=4611/2=2305.5$
\item [APQ:]$1034\cdot1+519\cdot\frac32+90\cdot\frac{3}2+47\cdot\frac{11}6
=6101/3\doteq2033.67$
\item [PQR:]$1034\cdot0+519\cdot\frac{11}6+90\cdot1+47\cdot\frac{3}2
=1112$
\end{val}
Hence the largest satisfaction is given by ABQ, so ABQ are elected.

\medskip
With Thiele's addition method (\refSS{SSthiuadd}), 
the first seat goes to the candidate with the
largest number of votes, \ie, A (1171 votes, see \refE{EPhr1894}).

For the second seat, all ballots ABC, ABQ and APQ now are worth $1/2$ vote
each.
This gives the vote counts
\begin{val}
\item [B]$1034/2+90/2=562$
\item [C]$1034/2=517$
\item [P]$519+47/2=542.5$
\item [Q]$519+90/2+47/2=587.5$
\item [R]519.
\end{val}
Thus Q has the highest vote count and is elected to the second seat.

For the third seat, the ballots ABC and PQR have the value $1/2$, and the
ballots ABQ and APQ have the value $1/3$ each. Hence the vote counts are
\begin{val}
\item [B]$1034/2+90/3=547$
\item [C]$1034/2=517$
\item [P]$519/2+47/3\doteq275.17$
\item [R]$519/2=259.5$.
\end{val}
Hence B gets the third seat.

\medskip
With Thiele's elimination method (\refSS{SSthi-}),
in the first round, each ballot is counted as $1/3$ (since they all contain
3 names), so R has the smallest vote count (519/3=173) and is eliminated.

This increases the value of the ballots PQR to $1/2$ each for P and Q,
so the vote counts in the second round are
\begin{val}
\item [A]$1034/3+90/3+47/3=1171/3
\doteq390.33$
\item [B]$1034/3+90/3=1124/3 
\doteq374.67$
\item [C]$1034/3
\doteq344.67$
\item [P]$519/2+47/3=1651/6\doteq275.17$
\item [Q]$519/2+90/3+47/3=1831/6\doteq305.17$.
\end{val}
P has the smallest vote count and is eliminated. 

In the third round, the ballots are worth $\frac13,1,\frac13,\frac12$, and
thus the vote counts are
\begin{val}
\item [A]$1034/3+90/3+47/2=2389/6\doteq398.17$
\item [B]$1034/3+90/3=1124/3 
\doteq374.67$
\item [C]$1034/3 
\doteq344.67$
\item [Q]$519+90/3+47/2=1145/2=572.5$.
\end{val}
Thus C is eliminated, and the remaining ABQ are elected.

\medskip
We see that in this example, all three Thiele's methods yield the same
result, which also agrees with \phragmen's method (\refE{EPhr1894}).
\end{example}

\section{House monotonicity}\label{Shouse}

The \emph{Alabama paradox} occurs when the house size $\MM$ (\ie, the number
of seats) is increased, with the same set of ballots, and as a result
someone loses a seat; see \cite{BY} for the historical context. 
(See also \cite{SJalabama}.)
An election method is \emph{house monotone} if the Alabama paradox cannot
occur.
More formally, and taking into account the possibility of ties, an election
method is house monotone if whenever, for a given set of ballots, 
$\cS_\MM$ is a possible outcome of the
election for $\MM$ seats, then there exists a
possible outcome  
$\cS_{\MM+1}$ for $\MM+1$ seats such that $\cS_{\MM+1}\supset\cS_\MM$.

Obviously, any sequential method where seats are assigned one by one is
house monotone; this includes \phragmen's method and Thiele's addition method
in both the unordered versions above and the ordered versions
below. Similarly, Thiele's elimination method is house monotone.
However, Thiele's \opt{} method is not house monotone
(as noticed by \citet{Thiele}), see Examples \ref{ETh} and \ref{ETh12}.
\refE{ETh12}  can be generalized as follows to other satisfaction functions
$f$.

\begin{theorem}\label{Tmono}
  Let $f(n)$ be a satisfaction function with $f(0)=0$ and $f(1)=1$.
Then Thiele's \opt{} method is house monotone if and only if $f(n)=n$.
\end{theorem}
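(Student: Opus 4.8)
The plan is to prove the two implications separately; the nontrivial direction is to show that $f\ne\mathrm{id}$ forces a failure of house monotonicity, which I would establish by constructing explicit profiles. The easy direction is the case $f(n)=n$: then $F(\cS)=\sum_\gs v_\gs|\gs\cap\cS|=\sum_{i\in\cS}\sum_{\gs\ni i}v_\gs$ is simply the sum over $i\in\cS$ of the number of votes for $i$, so the \opt{} method reduces to electing the $\MM$ candidates with the most votes. House monotonicity is then immediate: given any optimal $\cS_\MM$ (a set realizing the $\MM$ largest vote totals, counted with multiplicity), a candidate of largest vote total among those outside $\cS_\MM$ realizes the $(\MM+1)$-st largest total, so adjoining it yields an optimal $\cS_{\MM+1}\supset\cS_\MM$.

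For the converse, suppose $f\ne\mathrm{id}$. Since $f(1)=1$ gives $w_1=1$, there is a least index $m\ge2$ with $w_m\ne1$, and then $f(k)=k$ for all $k\le m-1$. The key preliminary observation is that for every house size $\MM\le m-1$ one has $|\gs\cap\cS|\le\MM\le m-1$ whenever $|\cS|=\MM$, so $F(\cS)=\sum_{i\in\cS}(\text{votes for }i)$ exactly as above; hence for all these sizes the method again coincides with Approval Voting and is house monotone. Consequently any failure of house monotonicity must first surface at the single step $\MM=m-1\to\MM=m$, and it suffices to build, for each admissible $f$, a profile whose unique $m$-seat outcome contains no $(m-1)$-seat outcome. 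I would split according to the sign of $w_m-1$.

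In the case $w_m>1$, disjoint ballots suffice. Take $m$ ``block'' candidates $c_1,\dots,c_m$ voted for together by $v$ voters, and $m-1$ single-candidate ballots $a_1,\dots,a_{m-1}$ with $u$ voters each. With $k$ block candidates elected and the remaining seats filled by singletons, the satisfaction is $vf(k)+(\MM-k)u$, and its successive increments $vw_k-u$ equal $v-u<0$ for $k<m$. Thus at $m-1$ seats the optimum puts $k=0$ (all singletons), while at $m$ seats (where one seat must go to the block for feasibility) the optimum is $k=m$ (the whole block) precisely when $vf(m)>v+(m-1)u$. Choosing $v<u<v(m-2+w_m)/(m-1)$ — a window that is nonempty exactly because $w_m>1$, and scalable to integer counts — makes the $(m-1)$-seat optimum $\{a_1,\dots,a_{m-1}\}$ but the unique $m$-seat optimum $\{c_1,\dots,c_m\}$, two disjoint sets.

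The case $w_m<1$ I expect to be the main obstacle, since disjoint ballots now give non-increasing increments, hence a concave and automatically monotone optimum; genuine overlap is required and the discount must act at level exactly $m$. I would use a ``hub'' candidate $A$ with peripherals $B_1,\dots,B_m$: for each $j$ let $p$ voters cast $\{A\}\cup\{B_i:i\ne j\}$, and for each $i$ let $s$ voters cast the pure ballot $\{B_i\}$. For $m-1$ seats this is Approval Voting with $A$ strictly on top (its $mp$ votes beat each peripheral's $(m-1)p+s$ once $s<p$), so every $(m-1)$-seat outcome contains $A$. For $m$ seats the only set omitting $A$ is $\{B_1,\dots,B_m\}$, and by the symmetry permuting the indices every $m$-set containing $A$ has the same satisfaction; a direct computation — using $f(k)=k$ for $k\le m-1$ so that all overlaps except the single full one contribute linearly — reduces the comparison to the clean identity that $\{B_1,\dots,B_m\}$ beats an $A$-set by exactly $s-p\,w_m$. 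Choosing $s$ in the window $p\,w_m<s<p$, nonempty precisely because $w_m<1$, makes $\{B_1,\dots,B_m\}$ the unique $m$-seat outcome, which contains no $(m-1)$-seat outcome. The delicate points, and where I would spend the care, are isolating the effect of $w_m$ (arranging the profile so that every overlap other than the decisive one sits at a level where $f$ is linear) and verifying the symmetry reduction, so that the single comparison $s-p\,w_m$ rules out \emph{all} $A$-containing $m$-sets at once.
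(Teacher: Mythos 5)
Your proof is correct, and I verified the decisive computations: the increments $vw_k-u$ in the block construction, the identity that the all-peripherals set beats every $A$-containing $m$-set by exactly $s-pw_m$, and the nonemptiness of both windows, which is precisely the sign condition on $w_m-1$. However, your route through the converse is genuinely different from the paper's. The paper uses two tiny examples --- the four ballots AB, AC, B, C (forcing failure when $w_2<1$) and the two ballots A, BC (forcing failure when $w_2>1$) --- to conclude $w_2=1$, and then generalizes not by enlarging these profiles but by \emph{padding}: $n$ dummy candidates $D_1,\dots,D_n$ are added to every ballot, so they are forced into every outcome and the decisive comparison is pushed up to $w_{n+1}$ versus $w_{n+2}$; house monotonicity then yields the chain $w_{n+2}=w_{n+1}$ for all $n\ge0$, hence $w_n\equiv 1$ (ties in these examples are dealt with by a remark that they can be perturbed away). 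You instead locate the first index $m$ with $w_m\ne1$ and attack the single step from $m-1$ to $m$ seats with scaled-up versions of the same two base profiles; indeed your hub construction at $m=2$, $p=s=1$ \emph{is} the paper's first example, and your block construction at $m=2$, $v=u=1$ is its second. The padding trick buys brevity: the two three-candidate examples do all the work and each comparison involves only two sets. Your route buys tie-free counterexamples (the open windows $v<u<v(m-2+w_m)/(m-1)$ and $pw_m<s<p$ remove any need for a perturbation remark) and a single self-contained counterexample at the first level where $f$ deviates from the identity, at the cost of the heavier bookkeeping you flagged --- which you carried out correctly, since the linearity of $f$ below $m$ makes every overlap except the decisive one contribute linearly. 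One small remark: your preliminary observation that the method is house monotone for house sizes below $m$ is motivation only; it is not needed for the contradiction, since exhibiting the one bad step suffices.
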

\begin{proof}
  First, if $f(n)=n$, then, as said above, Thiele's \opt{} method is
  equivalent to Approval Voting (\refApp{Aapproval}), which obviously is
  house monotone.

For the converse, consider an election with 4 votes, for AB, AC, B, C. For
$\MM=1$, a possible outcome is A, 
but if $w_2<1$, then the only outcome for $\MM=2$ is BC. 
In the opposite direction, in an election with the 2 votes A
and BC, a possible outcome for $\MM=1$ is A, 
but if $w_2>1$, then the only outcome for $\MM=2$ is BC. 
Hence, if the method is house monotone, then $w_2=1$.

More generally, for any $n\ge0$, consider the same two examples but add
$n$ further candidates D\subb1,\dots,D\subb{n} to every ballot. 
Then the only outcome for $\MM=n$ is D\subb1$\cdots$D\subb{n}.
For $\MM=n+1$, a possible outcome is D\subb1$\cdots$D\subb{n}A, but
for $\MM=n+2$, the outcome is D\subb1$\cdots$D\subb{n}BC in the first example if
$w_{n+2}< w_{n+1}$,  and in the second example if $w_{n+2}>w_{n+1}$.
Hence, if the method is house monotone, then $w_{n+2}=w_{n+1}$ for every
$n\ge0$, and thus $w_n=w_1=1$ for every $n\ge1$.
\end{proof}

The ties in the examples used in the proof can be avoided by modifying
the examples so that the different alternatives have slightly different
numbers of votes.

\begin{remark}\label{Rmono}
Thiele's \opt{} method and addition method coincide, for a general
satisfaction function $f$, if and only if the \opt{} method is house
monotone. (This is obvious, at least in the absence of ties, from the fact that
the addition method is a greedy version of the \opt{} method. The case with
ties is perhaps easiest seen by using \refT{Tmono}.)
Hence, the two methods coincide only for $f(n)=n$.

Similarly, 
Thiele's \opt{} method and elimination method coincide only for $f(n)=n$.

The examples in the proof above also show that the addition and elimination
methods do not coincide for any other function $f$.
This can be seen without further calculations
by noting that in these examples, for $\MM=n+1$, the addition
method and the \opt{} method coincides, while for $\MM=n+2$ (when only one
candidate is eliminated), the elimination method coincides with the \opt{}
method;
hence, if the addition method coincides with the elimination method, then the
\opt{} method coincides with the addition method in these examples, which
is impossible when it is not house monotone.

As said above, \citet{Thiele} noted that for the ``proportional''
satisfaction function \eqref{f},
the three methods may give different result, see \refE{ETh}.
He also gave an example showing the same for 
the ``weak'' satisfaction function \eqref{fweak}
\cite[Examples 5 and  6]{Thiele}.
\end{remark}
 
\section{Unordered ballots, principles}\label{Sunordered}
The unordered version of \phragmen's method and Thiele's methods 
satisfy the following general principles:
\begin{metod}{Principles for unordered versions of \phragmen's and thiele's methods}\quad
\begin{PXenumerate}{U}
\item \label{Pu}
The ballots are unordered, so that each voter lists a
number of candidates, but their order is ignored.
\item \label{Pufull}
In the allocation of a seat, 
each ballot  is counted fully for every  candidate on it
(ignoring the ones that already have been elected).
The value of a ballot is reduced (in different ways for the different
methods) when someone on it is elected, but the effective value of a ballot
does not depend on the number of unelected candidates on it.
\end{PXenumerate}  
\end{metod}
As a consequence of \ref{Pufull}, the methods have also the following
property,
which reduces the need for tactical voting.
\begin{PXenumerateq}{U}
  \item \label{Puadd}
\emph{%
If any number  of candidates are added to a ballot, but
none of them is elected, then the result of the election is not affected.
}
\end{PXenumerateq}

Note that \ref{Pufull} and \ref{Puadd} hold also for Approval Voting (which
can be seen as a special case of Thiele's general \opt{} method, see
\refSS{SSthiuopt}), but not for some other election methods with
unordered ballots, for example Equal and Even Cumulative Voting
(\refApp{ACV}), where a vote
is split between the names on it, and the value of a ballot for each
candidate on it decreases if more names are added.

\section{Unordered ballots and decapitation}\label{Sdecap}

Unordered ballots have some problems that are more or less independent of
the election method used to distribute the seats, and in particular apply to
both \phragmen's and Thiele's methods.

If there are organized parties, a party usually fields more candidates
than the number that will win seats, for example to be on the safe side since 
the result cannot be predicted with certainty in advance.
If all voters of the party are loyal to the party and vote for the party list, 
the result will be that all candidates on the party list will tie, and 
the ones that are elected will be chosen by lot from the party list.

The party can avoid this random selection, 
and for example arrange so that the party leader
gets a seat (if the party gets any seat at all), by organizing a scheme where
a small group of loyal members vote for specially selected subsets of the
party list, with the result that the party's candidates 
will get slightly different number of votes, in the order 
favoured by the party organization. However, this will still be sensitive to
a coup from a rather small minority in the party, that might 
secretly agree not to
vote for, say, the party leader. To protect against such coups, the party
can give instructions to vote on specific sets of candidates
to a larger number of voters, but it is certainly a serious drawback of an
election system to depend on complicated systems of tactical voting.
(This might also, depending on the system, 
make it more difficult for the party in the competition with other parties,
and the party might risk to get fewer seats.) 

Moreover, even if party $A$ is well-disciplined and there is no internal
opposition, it is possible that a small group from another party, say $B$, will 
cast their votes on some less prominent and perhaps less able candidates
from party $A$, instead of voting for their own party. Of course, that might
give party $A$ another seat, but with a small number of tactical votes, this
risk is small. On the other hand, there is a reasonable chance that the
extra votes will elect the chosen candidates from $A$ instead of the ones
preferred by the party. 
This tactical manoeuvre, called \emph{decapitation}, can thus prevent \eg{}
the party leader of an otherwise successful party to be elected.

Decapitation does not necessarily occur because of sinister tactics by some
groups; it can also occur by mistake, when too many voters believe that
their primary candidate is safe and therefore also vote for others.
One such  situation  is discussed in
\refE{Erank}. (There in connection with further complications caused by
an  election system with additional rules.)

I do not know to what extent  such decapitation  occurred in practice, but 
at least the unintentional type in \refE{Erank} occurred
\cite[p.~8]{bet1913}, and decapitation was
something that was feared and much discussed in the discussions about electoral
reform in Sweden around 1900;
see for example \cite[p.~25]{Cassel} and \cite[pp.~13, 20, 33, 35]{bet1913}.
As a result, versions using ordered ballots were developed
of both \phragmen's method and Thiele's addition
method, as described in the following sections.
\xfootnote{
  \citet{Phragmen1895} makes another suggestion to avoid the problem of
  decapitation (saying that it is just one
  possibility among many), while keeping unordered ballots. 
In this proposal, parties could  register ordered
  party lists. 
The ballots would still be regarded as unordered, but
a vote on a party list, say ABCDE, would in addition to the vote ABCDE also
be regarded as (for example) $\frac{1}{10}$ extra vote on shorter lists, in
this case ABCD, ABC, AB and A, split between them with $\frac{1}{40}$ extra
vote each.
It seems that everyone ignored this suggestion,
including  \phragmen{} himself in later writings, and it seems for
good reasons.
}
(We do not know of any ordered versions of 
Thiele's \opt{} and elimination methods,
except that \emph{Bottoms-up}, see \refApp{Abottom}, perhaps might be seen as
an ordered version of his elimination method.)

\section{Ordered ballots, principles}\label{Sordered}

The ordered versions of \phragmen's method and Thiele's (addition) method 
differ from  the
unordered versions in Sections \ref{SPhru} and \ref{Sthiu} in two ways:
\begin{metod}{Principles for ordered versions}
\quad\begin{PXenumerate}{O}
\item \label{Po}
The ballots are ordered, so that each voter lists a
number of candidates in order.
\item \label{Potop}
In the allocation of a seat, 
each ballot  is counted only for one candidate, 
\viz{} the first candidate on it that is not already elected.
\end{PXenumerate}
\end{metod}
This is detailed in the following sections.

Note that \ref{Potop} implies the following property, similar to 
\ref{Puadd} for the unordered versions.
  \begin{PXenumerateq}{O}
\item \label{Poadd}
\emph{%
If any number  of candidates are added after the existing names on a
ballot, then the result of the election is not affected unless all existing
candidates are elected.
}
\end{PXenumerateq}
Thus, a voter can add names after his or her favourite candidates without
risking to hurt their chances.

\section{\phragmen's ordered method}\label{SPhro}

\phragmen's ordered method is thus obtained 
by modifying the
version in \refS{SPhru} using the principles \ref{Po}--\ref{Potop}
in \refS{Sordered}.
This method was proposed in 1913 by a Royal
Commission on the Proportional Election Method \cite{bet1913}
(see \refApp{Ahistory});
\phragmen{} was one of the members of the commission, so it is natural to
guess that he developed also this version.
\xfootnote{
\citet{Phragmen1893} had already in 1893 used ordered ballots in one example
when he discussed
a version of STV.}
(A similar method had been proposed by \citet{Tenow1910} in 1910.)

The method has been used in Swedish elections for the distribution of seats
within parties since 1921,
although it now plays only a secondary role;
see Appendices \ref{Ahistory}
and
\ref{APh-vallag}.

\subsection{First formulation}\label{SPhro1}
The ordered version can thus be defined as follows, \cf{} \refS{SPhru1}.
A detailed example is given in \refSS{SSPhro-ex}; 
further examples are given in \refS{Sex}.

\begin{metod}{\phragmen's ordered method, formulation 1}

Assume that each ballot has some \emph{voting power} $t$; this number is
the same for all ballots and will be determined later. 
A candidate needs total voting power $1$ in order to be elected.
During the
procedure described below, some of the voting power of a ballot may be already
assigned to already elected candidates; the remaining voting power of the
ballot
is free, and is used by the ballot's current
top candidate, \ie, the first candidate on the ballot that is not already
elected.

The seats are distributed one by one.

For each seat, 
each ballot is counted for its current top candidate.
(If all candidates on a ballot are elected, the ballot is ignored.)
The top candidate receives the free voting power of the ballot.
(I.e., the full voting power except for the voting power
already assigned from that ballot to candidates already elected.)
For each candidate,
the ballot voting power $t$ 
that would give the candidate voting power $1$ is
computed, and the candidate requiring the smallest voting power $t$ is
elected. All free (\ie{} unassigned)
voting power on the ballots that were counted for the elected
candidate is assigned to that candidate, and these assignments remain fixed
throughout the election. 

The computations are then repeated for the next seat, with the remaining
candidates, and so on.
\end{metod}

\begin{remark}
  \label{Rtimeo}
As for the unordered version in \refS{SPhru}, we can think of the voting
power of each ballot increasing continuously with time, see \refR{Rtime}.
Again, the voting power available to each candidate increases with time, and
when some candidate reaches voting power 1, that candidate is elected.
(A difference from the unordered version is that the available voting powers
of the other candidates do not change when someone is elected; in
particular, the voting powers of the candidates never decrease.)
\end{remark}

\subsection{Second formulation}\label{SPhro2}
The same calculations as in \refS{SPhru} show that the formulation above is
equivalent to the following more explicit algorithm,
\cf{} \refS{SPhru1}:

\begin{metod}{\phragmen's ordered method,  formulation 2}
Seats are given to candidates sequentially, until the desired number
have been elected. 
During the process, each type of ballot, \ie,
each group of identical ballots, 
is given a 
\emph{place number}, which is a rational non-negative number that can be
interpreted as the (fractional) number of seats elected so far by these
ballots; the sum of the place numbers is always equal to the number of seats
already allocated. 
The place numbers are determined recursively and the seats
are allocated by the following rules:
\begin{romenumerate}
\item 
Initially all place numbers are $0$.
\item \label{phro2b}
Suppose that $n\ge0$ seats have been allocated.
Let $q_\ga$ denote the place number for the ballots with a list $\ga$
of candidates;
thus $\sum_\ga{q_\ga}=n$.
For each candidate $i$, let $A_i$ be the set of lists $\ga$ such that $i$ is
the first element of $\ga$ if we ignore candidates already elected.
The total number of votes counted for candidate $i$ in this step is 
$\sum_{\ga\in A_i} v_\ga$, and 
the total place number of the ballots counted for candidate $i$ is
$\sum_{\ga\in A_i} q_\ga$. The \jfr{} for candidate $i$ is defined as
\begin{equation}\label{wio}
  W_i:=\frac{\sum_{\ga\in A_i} v_\ga}{1+\sum_{\ga\in A_i} q_\ga},
\end{equation}
\ie, the total number of votes counted for the candidate divided by 
$1$ $+$ their total
place number.

\item 
The next seat is given to the candidate $i$ that has the largest $W_i$.
\tie

\item \label{phro2d}
Furthermore, if candidate $i$ gets the next seat, 
then the place numbers are updated
for all lists $\ga$ that participated in the election, \ie, the lists
$\ga\in A_i$. For such $\ga$, the new place number is
\begin{equation}\label{q'o}
 q'_\ga:=\frac{v_\ga}{W_i}
=
\Bigpar{1+\sum_{\ga\in A_i} q_\ga}\frac{v_\ga}{\sum_{\ga\in A_i} v_\ga}.
\end{equation}  
For  $\ga\notin A_i$, $q'_\ga:=q_\ga$.
\end{romenumerate}
Steps \ref{phro2b}--\ref{phro2d} are repeated as many times as desired.
\end{metod}

Indeed, the commission report \cite{bet1913} that proposed the method in
1913 introduced it in
this form; the committee first discussed the current method and various
proposals to improve it,
and showed by examples that they all were unsatisfactory, and then presented the
method above gradually, through a sequence of examples of increasing
complexity,  
as the correct generalization of D'Hondt's method to
ordered lists.
\xfootnote{The commission report also noted that the method was essentially
  the same as the method for unordered ballots presented in 1894 by
one of the members of the commission, 
  \citet{Phragmen1894,Phragmen1895}.}

\begin{remark}\label{RWto}
  As in \refS{SPhru}, the connection between the two formulations above is
  that $W_i=1/t_i$, where $t_i$ is the voting power per ballot that would
  give $i$ voting power 1 in the current round.
In particular, \eqref{Wntn} still holds for the winning voting power and
\jfr{} in each round.
\end{remark}

\subsection{Third formulation}\label{SPhro3}
In practical applications of the method, 
it is not necessary to keep track
of and calculate place numbers for each individual type of ballot; since
each ballot counts only for its current top name, 
it suffices to group ballots according to their current top name.
This simplification 
was introduced already in the commission report \cite{bet1913} where the
method was introduced, and it
is used 
in the Swedish \EA, where this method has played a part (within
parties) since 1921,
see \refApp{Ahistory}, and where the method is defined in the following form (in
my words; for the actual text of the \EA, see \refApp{APh-vallag}):

\begin{metod}{\phragmen's ordered method,  formulation 3}
Seats are given to candidates sequentially, until the desired number
have been elected.
During the process,
the ballots are organized in groups; 
the ballots in each group have the same top candidate, ignoring candidates
already elected, and the group is valid for that candidate.
Each group is also given a 
\emph{place number}, which is a rational non-negative number that can be
interpreted as the (fractional) number of seats elected so far by these
ballots; the sum of the place numbers of the different groups
is always equal to the number of seats
already allocated. 

The groups are created, 
their place numbers are determined and the seats
are allocated by the following rules:
\begin{romenumerate}
\item 
Initially, the ballots are divided into groups according to their first
name.
Each group is given place number\/ $0$.

\item \label{phro3b}
Suppose that $n\ge0$ seats have been allocated.
Denote the current set of groups of ballots by $\gG$, and for each group
$\gam\in\gG$, let $v_\gam$ be its number of votes and  $q_\gam$ its place
number; furthermore, let $i_\gam$ be the candidate that the group is valid
for.
For each candidate $i$,
let $\gG_i:=\set{\gam\in\gG:i_\gam=i}$, the set of groups valid for
that candidate,
and 
define the \jfr{}  by
\begin{equation}\label{wig}
  W_i:=\frac{\sum_{\gam\in \gG_i} v_\gam}{1+\sum_{\gam\in \gG_i} q_\gam},
\end{equation}
\ie, the total number of votes currently valid for the candidate divided by\/ 
$1$ $+$ the total place number of the corresponding groups.

\item 
The next seat is given to the candidate $i$ that has the largest $W_i$.
\tie

\item \label{phro3d}
If candidate $i$ gets the next seat, then the groups valid for $i$ are
merged together and then divided into new groups according to the new top
candidate on the ballots, \ie, 
the first candidate after $i$ that is not already elected. (Ballots were all
candidates are elected are put aside and ignored in the sequel.)
Each new group $\gam$
is given a place number
\begin{equation}\label{q'g}
 q_\gam:=\frac{v_\gam}{W_i}
.
\end{equation}  
All other groups remain unchanged, and keep their place number.
\end{romenumerate}
Steps \ref{phro3b}--\ref{phro3d} are repeated as many times as desired.
\end{metod}

It is easily seen that this version is equivalent to the version in
\refS{SPhro2}; each group 
contains all ballots of some set 
of types (each given by a list $\ga$ of names), and the
number of votes and the place number for the group are just the sums of the
number of votes and of the place numbers for these types.

\subsection{An example}\label{SSPhro-ex}
We illustrate Phragmén's ordered
method by his unordered example  in \refE{EPhr1894}, but now interpreting the
ballots as ordered (with the first candidate first).

\begin{example}[\phragmen's ordered method]  \label{EPhr1894-o}
\quad

Ordered ballots. 3 seats. \phragmen's method.
\begin{val}
\item [1034]ABC
\item [519]PQR
\item [90]ABQ
\item [47]APQ
\end{val}
We use the formulation of \phragmen's ordered method in \refSS{SPhro2}.

Since each ballot is counted only for its first name,
we have in the first round 1171 votes for A and 519 votes for P, so A is
elected. 

This gives a place number (= assigned voting power = load) $1/1171$ to each
ballot ABC, ABQ or APQ, and thus, in the second round,
the four groups of ballots have 
place numbers
$\frac{1034}{1171},0,\frac{90}{1171},\frac{47}{1171}$, see \eqref{q'o}.
(If we use the formulation in \refSS{SPhro3}, we group the ballots ABC and
ABQ together; this group has 1124 votes and place number $\frac{1124}{1171}$.)

In the second round, the top names are B and P. They have 1124 and 566 votes
with total place numbers $\frac{1124}{1171}\doteq0.9599$ and
$\frac{47}{1171}\doteq0.0401$
(with sum 1),
respectively, 
and thus \jfr{s}
$$
W_\rmB=\frac{1124}{1+\frac{1124}{1171}}=\frac{1316204}{2295}\doteq573.51
$$
and
$$
W_\rmx P=\frac{566}{1+\frac{47}{1171}}=\frac{331393}{609}\doteq544.16
$$
Thus B is elected to the second seat.

The place numbers for the four groups of ballots are now
$\frac{1034}{W_\rmB}=\frac{1186515}{658102}\doteq1.8029$,
$0$, $\frac{90}{W_\rmB}=\frac{103275}{658102}\doteq0.1569$,
$\frac{47}{1171}\doteq0.0401$ (with sum 2).

In the third round, the top names on the four types of ballots are C, P, Q,
P,
and their \jfr{s} are
\begin{align*}
  W_\rmC&=\frac{1034}{1+\frac{1186515}{658102}}
=\frac {680477468}{1844617}
\doteq 368.90
\\
W_\rmx P&=\frac{566}{1+\frac{47}{1171}}=\frac{331393}{609}
\doteq 544.16
\\
W_\rmx Q&=\frac{90}{1+\frac{103275}{658102}}=
\frac {59229180}{761377}
\doteq 77.79.
\end{align*}
Hence, P is elected to the third seat.

Elected: ABP.
\end{example}

\section{Thiele's ordered method}\label{Sthio}

Thiele's ordered method is obtained by modifying
his unordered addition method in \refS{SSthiuadd} 
using the principles in \refS{Sordered}.

\begin{metod}{Thiele's ordered method}
Seats are given to candidates sequentially, until the desired number
have been elected.

For each seat, a 
ballot 
is counted for the ballot's current top
	candidate, \ie, the first  candidate on the ballot that has not already
	been elected; if this is the $k$-th name on the ballot (so the preceding
    $k-1$ have been elected), then the ballot is counted as $1/k$ vote.
(Ballots where all candidates have been elected are	ignored.)
The candidate with the largest vote count is elected.
\end{metod}
A detailed example is given in \refSS{SSthio-ex}; 
further examples are given in \refS{Sex}.

\begin{remark}
  This is based on Thiele's proportional satisfaction function \eqref{f}.
Of course, just as in \refS{Sthiu}, other functions $f$ may be used. 
We leave this to the reader to explore.
\end{remark}

The ordered version of Thiele's method was proposed in the Swedish 
parliament in 1912 (by Nilson in \"Orebro),
as an improvement of the then used unordered Thiele's method for
distribution of seats within parties 
(see \refApp{Ahistory}). 
It was not adopted, but it was later adopted, still within parties, for
elections inside city and county councils, 
and it is still used for that purpose, see \refApp{Athi-lag}.

\subsection{An example}\label{SSthio-ex}
We illustrate also
Thiele's ordered method 
with the same example as in Examples \ref{SSPhru-ex}, \ref{SSthiu-ex} and
\ref{SSPhro-ex}. 

\begin{example}[Thiele's ordered method]  \label{EPhr1894-Tho}
\quad

Ordered ballots. 3 seats. Thiele's method.
\begin{val}
\item [1034]ABC
\item [519]PQR
\item [90]ABQ
\item [47]APQ
\end{val}

Since each ballot is counted only for its first name, we have in the first
round
(as in \refE{EPhr1894-o}) 
1171 votes for A and 519 votes for P, so A is elected.

For the second seat, all ballots ABC, ABQ and APQ now have a value $1/2$
each.
This gives the vote counts
\begin{val}
\item [B]$1034/2+90/2=562$
\item [P]$519+47/2=542.5$
\end{val}
Thus B has the highest vote count and is elected to the second seat.

For the third seat, the four types of ballots have the values
$\frac{1}3,1,\frac{1}3,\frac12$ each. Hence the vote counts are
\begin{val}
\item [C]$1034/3\doteq 344.67$
\item [P]$519+47/2=542.5$
\item [Q]$90/3=30$.
\end{val}
Hence P gets the third seat.

Elected: ABP.
(As with \phragmen's method in \refE{EPhr1894-o}.)
\end{example}

\section{\phragmen's and Thiele's methods generalize
  D'Hondt's}\label{Sparty}

The election methods considered in this paper are blind to the possible
existence of parties.
Nevertheless, it is important to see how they perform in the presence of
parties. We therefore consider the following special case:
\begin{metod}{party list case}
  There is a number of
parties such that:
\begin{itemizex}
\item Each party has a single list of candidates. (Ordered or unordered,
  depending on the election method.)
\item The lists are disjoint. (I.e., no person runs for more than one
  party.) 
\item Every voter votes for one of the party lists.
\end{itemizex}
\end{metod}

In this case, \phragmen's and Thiele's methods reduce to D'Hondt's method
(described in \refApp{ADHondt}). 
This was explicitly stated (and shown) by
\citet[pp.~38--40]{Phragmen1895} for his method;
\citet[p.~425]{Thiele} states explicitly only the special case when the
fraction  of votes 
for each party is an integer times $1/\MM$, so that a perfectly proportional
representation is possible, but his proof really shows the general result.

\begin{theorem}[\phragmen, Thiele]\label{TDHondt}
  In the party list case just described, \phragmen's unordered and ordered
  methods and Thiele's three unordered methods and his ordered method 
(assuming the proportional satisfaction function \eqref{f})
all yield the same distribution of seats between parties as D'Hondt's method.
\end{theorem}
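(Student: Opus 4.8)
The plan is to treat all seven methods through a single organizing observation: because the lists are disjoint, every candidate $i$ of a party $p$ meets on the ballots exactly one ballot type, namely that party's list, carrying its $V_p:=v_{\gs_p}$ votes (or $v_{\ga_p}$ in the ordered case). Hence every per-candidate score collapses to a per-party quantity depending only on how many seats $s_p$ the party has already obtained. I would recall D'Hondt's method (\refApp{ADHondt}) in its two standard equivalent forms: sequentially, award each seat to a party maximizing the quotient $V_p/(1+s_p)$; equivalently, award the $\MM$ seats to the $\MM$ largest among all quotients $V_p/k$, $k=1,2,\dots$. Everything reduces to recognizing these quotients inside each method.

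First I would handle the four sequential methods together. For \phragmen's method in the formulation of \refSS{SPhru2}, I claim by induction on the number of allocated seats that the place number $q_{\gs_p}$ equals the number $s_p$ of seats already won by party $p$: it is $0$ initially, and by the update \eqref{q'} a win by $p$ replaces $q_{\gs_p}$ by $v_{\gs_p}/W_i=1+q_{\gs_p}$, since disjointness gives $\sum_{\gs\ni i}v_\gs=V_p$ and $\sum_{\gs\ni i}q_\gs=q_{\gs_p}$ for every $i\in\gs_p$. Substituting into \eqref{wi} yields $W_i=V_p/(1+s_p)$ for each unelected $i\in\gs_p$, exactly D'Hondt's quotient, so each seat goes to a candidate of the party with the largest quotient. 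The ordered version is identical after replacing $\gs$ by the single list $\ga_p$ and noting that all $V_p$ ballots of $p$ always count for the same current top candidate, so $A_i$ contributes only $\ga_p$ in \eqref{wio}--\eqref{q'o}; candidates are therefore elected in list order. For Thiele's addition method, a ballot of $p$ with $s_p$ names already elected counts as $1/(s_p+1)$, giving every remaining $i\in\gs_p$ the count $V_p/(s_p+1)$; for Thiele's ordered method the current top name is the $(s_p+1)$-st entry of $\ga_p$, so the ballot again counts $1/(s_p+1)$. In both the per-candidate score is the D'Hondt quotient, so the allocation is D'Hondt.

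Next I would treat Thiele's \opt{} method. Here a set $\cS$ assigning $s_p:=|\gs_p\cap\cS|$ seats to each party has satisfaction $F(\cS)=\sum_p V_p f(s_p)$ with $\sum_p s_p=\MM$. Writing $V_p f(s_p)=\sum_{k=1}^{s_p}V_p/k$, maximizing $F$ amounts to choosing for each $p$ a prefix of the decreasing sequence $V_p/1>V_p/2>\cdots$ of total length $\MM$ so as to maximize the sum of chosen terms; since each party's terms decrease in $k$, this is achieved precisely by taking the $\MM$ largest among all $V_p/k$, which is D'Hondt. (I assume, as is implicit, that each list is long enough to realize the allocation; otherwise D'Hondt is read as capped at $|\gs_p|$ and the same argument applies with bounded prefixes.) Finally, Thiele's elimination method is the reverse greedy for the same objective: deleting a candidate from a party with $r_p$ names present decreases $F$ by $V_p w_{r_p}=V_p/r_p$, and the method always deletes a smallest such decrement. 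Because within a party these removable decrements $V_p/r_p$ increase as candidates are removed, greedily deleting the globally smallest decrement deletes exactly the $N-\MM$ smallest of all quotients $V_p/k$ (with $N=\sum_p|\gs_p|$), leaving the $\MM$ largest — again D'Hondt.

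The routine parts are the bookkeeping for the sequential methods; the substantive step is the \opt{} and elimination methods, where everything rests on the strict concavity of $f$, equivalently the strict decrease of $w_k=1/k$, which makes the greedy selection of the largest quotients globally optimal and makes the forward and backward greedies agree. This is exactly the property that fails for a general $f$ (\cf{} \refR{Rmono}), so the argument must genuinely use the choice \eqref{f}; I expect this equivalence (largest-quotients $=$ concave maximum $=$ reverse greedy) to be the one point needing care. The only delicate remaining issues are ties and boundaries: ties among quotients reproduce precisely the multiply-valued allocations of D'Hondt, and lists shorter than the desired allocation are covered by the capping remark above, neither affecting the seat distribution between parties.
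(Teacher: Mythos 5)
Your proof is correct and follows exactly the route the paper indicates: the paper's own ``proof'' merely declares the result more or less obvious and records your two key observations (that in the party list case all unelected candidates of a party tie, and that for \phragmen's methods the place number of a list equals the number of seats that list has already won), which your induction via \eqref{wi}--\eqref{q'} and \eqref{wio}--\eqref{q'o} makes precise. Your prefix/greedy argument for Thiele's \opt{} and elimination methods correctly supplies the one genuinely non-routine step that the paper leaves entirely to the reader, including the observation that it is the monotonicity of $w_k=1/k$ that makes the forward greedy, the reverse greedy, and the global maximum agree.
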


\begin{proof}
More or less obvious, and left to the reader.
Note that for the unordered methods, all   candidates from the same party
will tie, 
and that for \phragmen's methods, the place number of a list equals the
number of seats already given to that list (since lists are disjoint).  
\end{proof}

\begin{remark}
For the unordered methods,  
there is a minor difference with D'Hondt's method in the case of a tie, if
  that is resolved by lot. In D'Hondt's method, a tie will be resolved by
  choosing one of the tying parties at random, while in \phragmen's and
  Thiele's methods, one of the tying candidates is chosen at random, and thus
  a party with many candidates will have a larger chance of getting the seat.
\end{remark}

\begin{remark}
  For the unordered methods, by the assumptions of the party list case, all
  candidates from the same party will tie, so which of them that will be
  elected to the seats won by the party will typically be determined by lot.
(See also \refS{Sdecap}.)
For the ordered methods, the seats of a party will be assigned in the order
the candidates appear on the list.
\end{remark}

\begin{remark}
  The theorem extends to Thiele's methods with a different satisfation
function $f(n)=\sum_{k=1}^nw_k$; in the party list case they all give the same
result as the 
divisor method with divisors $1/w_n$ (see \refApp{Alist}).
See \cite{BrillLS}, where also further related results are given.
\end{remark}

\section{Electing a single person}\label{S1}

Another simple special case is when $\MM=1$, so that only one person is to be
elected.
(A single-member constituency.)
Of course, in this case, there are simpler methods, but it is of interest to
see how the methods above perform.

\begin{theorem}\label{T1}
In the case $\MM=1$, the following holds.
  \begin{romenumerate}
  \item For unordered ballots, \phragmen's method and Thiele's \opt{} and
	addition method elect the candidate with the largest number of
	votes. I.e., they are equivalent to Approval Voting
 (\refApp{Aapproval}).
  \item 
For ordered ballots, \phragmen's and Thiele's methods elect the candidate
with the largest number of first votes. I.e., they are equivalent to
Single-Member  Plurality (First-Past-The-Post, see  \refApp{ABV}). 
(Names after the first name on a ballot are thus ignored.)
  \end{romenumerate}
\end{theorem}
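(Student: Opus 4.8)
The plan is to exploit that with $\MM=1$ only a single seat is filled, so for every sequential method it suffices to analyse the first round, before any place numbers or satisfaction reductions have taken effect. I would treat the two parts separately, and within each part dispatch the \phragmen{} method and the Thiele methods by the same observation, namely that all the reduction machinery is inert initially.

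For part (i), I would first note that in \phragmen's unordered method the place numbers all start at $0$, so by \eqref{wi} the \jfr{} reduces to $W_i=\sum_{\gs\ni i}v_\gs$, the total number of votes for $i$; since only one seat is filled, it goes to the candidate with the most votes. The same conclusion for Thiele's addition method is immediate, since for the first seat every ballot containing $i$ is counted as one full vote, so the vote count is again $\sum_{\gs\ni i}v_\gs$. For Thiele's \opt{} method I would evaluate $F$ on singletons directly: taking $\cS=\set i$ gives $|\gs\cap\cS|=\ett{i\in\gs}$, whence $f(0)=0$ and $f(1)=1$ yield $F(\set i)=\sum_{\gs\ni i}v_\gs$, so maximizing over sets of size $1$ again selects the candidate with the largest number of votes. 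All three thus coincide with Approval Voting.

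Part (ii) is handled the same way, now invoking principle \ref{Potop}: each ballot is counted only for its current top name, which in the first round is simply its first name. For \phragmen's ordered method (formulation 2) the place numbers are again all $0$ initially, so by \eqref{wio} we get $W_i=\sum_{\ga\in A_i}v_\ga$, the number of ballots whose first name is $i$; the sole seat goes to the candidate with the most first votes. For Thiele's ordered method each ballot is in the first round counted as $1/1=1$ vote for its first name, so the vote count for $i$ is once more the number of first votes. Both therefore agree with Single-Member Plurality, and names appearing after the first on any ballot never enter the computation for the single seat, so they are irrelevant.

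I expect no genuine obstacle: the whole content is that the place-number and satisfaction-reduction mechanisms do nothing in the first round, so each method collapses to counting (first) votes. The only point worth flagging explicitly is why part (i) omits Thiele's elimination method: for $\MM=1$ that method runs through many rounds in which the vote counts are rescaled as candidates are removed, so it need not agree with Approval Voting and is correctly excluded from the statement.
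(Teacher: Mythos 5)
Your proof is correct, and it is the natural unfolding of the paper's own proof, which simply states ``Obvious'': in every sequential method the first round involves no reductions, so the seat goes to the candidate with the most (first) votes, and for Thiele's \opt{} method the satisfaction of a singleton is exactly its vote count. Your closing remark about why Thiele's elimination method is excluded also matches the paper, which makes the same point in the remark following the theorem with a pointer to \refE{ETh12}.
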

\begin{proof}
  Obvious.
\end{proof}

\begin{remark}
  Note that Thiele's elimination method may give a different result, see
  \refE{ETh12}. 
\end{remark}

\section{Examples}\label{Sex}

We give several examples to illustrate and compare the different methods.
(Many of them are constructed by various authors to illustrate weaknesses of
some method.)
See also Examples 
\ref{EPhr1894},
\ref{EPhr1894-Th},
\ref{EPhr1894-o}, and
\ref{EPhr1894-Tho}.

It is seen that, except in very simple cases, 
\phragmen's methods lead to rational
numbers with large denominators and that they thus
are not suitable for hand calculations;
this is not a problem with computer assistance.
\xfootnote{In the use of the method in
Sweden (\refApp{APh-vallag}), all numbers are calculated to two decimal
places (rounded down), so exact rational arithmetic is not used.
}\,
\xfootnote{For computational aspects, see also \cite{BrillEtAl}.}

In the examples below, ABC denotes a ballot with the names A, B and C; in the
case of ordered ballots, the names are given in order, with A the first name.

In several of the examples, there are obvious ties because of symmetries. We
then usually give only one, typical, outcome, sometimes without comment.

\begin{example}[\phragmen's unordered method]\label{E1893a}
\citet[pp.~19, 28, 44]{Phragmen1895} discusses the following example, taken
from the parliamentary election 1893 (Stockholm's second constituency). 
There were actually 2624 valid ballots, but for simplicity \phragmen{}
ignores 73 ballots of various types with less than 10 ballots each.
\xfootnote{The five remaining types, shown below, were all promoted by various
  political organizations; note that there are considerable overlaps.}

Unordered ballots. 5 seats. \phragmen's method.
\xfootnote{
T = Themptander, V = von Friesen, F = Fredholm, P = Palme, 
E = Eklund, L = Lovén, N = Nordenski{\"o}ld, B = Backman, Q = Pettersson,
J = Johansson, W = Wikman.
}\,
\xfootnote{The actual election used the Block Vote, and the elected were,
  counting also the ballots ignored here, 
TVFPE with 1995, 1470, 1458, 1316 and 1313 votes, respectively; the next
candidate had 1301 votes \cite[p.~29]{SCB1891-1893}.
} 
  \begin{val}
  \item[1233] TVFPE  
  \item[585] TLNBQ   
  \item [124] TVFLN  
  \item [547] LNBQJ  
  \item [62] VFPEW   
  \end{val}
\phragmen{} uses this example to discuss several conceivable methods, and
gives detailed calculations for his method which results in the election of, in
order (after resolving ties arbitrarily) TLVNF. 
\end{example}

\begin{example}[\phragmen's and Thiele's unordered methods]\label{E1893b}
  \citet[p.~49]{Phragmen1895} gives also another example from the
parliamentary election 1893 (Stockholm's fifth constituency). 
There were actually 1520 valid votes, of which 57 (of types with less than 10
ballots each) are ignored in the example.

Unordered ballots. 5 seats. \phragmen's and Thiele's methods.
\xfootnote{
H = H\"ojer, G = G. Eriksson, E = P.J.M. Eriksson, B = Bergstr\"om, O = Olsson,
X = Branting, Q = \"Ostberg, P = Palm, Y = Hellgren, Z = Billmansson, L =
Lindvall.
}\,
\xfootnote{The actually elected, using the Block Vote and
  counting also the ballots ignored here, were
HGEBO, with 1454, 1421, 1055, 1053, 715 votes, respectively, with 469 votes
for the 
next candidate
\cite[p.~29]{SCB1891-1893}.
}
\begin{val}
\item [680] HEOBG  
\item [341] HEXBG  
\item [322] HYPQG  
\item [49] HPXQG   
\item [47] ZYPQL   
\item [14] HEOBX   
\item [10] HPOQG   
\end{val}
\phragmen{} shows that his method would elect HGEBQ (or HGEBP; the last seat
is tied, as is the order between E and B, but both are elected in any case).

\citet[Examples 1 and 2]{Thiele} uses the same example 
(omitting Z and L to show that all ballots do not have to have the same
number of names, and perhaps also to shorten the calculations) 
and shows that his addition and elimination methods both also elect HGEBQ (or
HGEBP).

Thiele's \opt{} method yields also the same result.
(\citet{Thiele} did not calculate this, presumably because
even with his omission of two candidates, there
are $\binom 95=126$ sets to consider.)
\end{example}

\begin{example}[Thiele's three unordered methods]
  \label{ETh}
The following example by \citet[Examples 3 and 4]{Thiele}, here slightly
modified to avoid ties, shows that the three methods by Thiele
(see \refS{Sthiu}) can give different results.

Unordered ballots.
2 seats. 
  \begin{val}
  \item[960] ACD
  \item [3000]BCD
  \item [520]BC
  \item [1620]AB
  \item [1081]AD
  \item [1240]AC
  \item [360]BD
  \item [360]D
  \item [120]C
  \item [60]B
  \end{val}
Thiele's \opt{} method elects AB, his addition method elects CA and his
elimination method elects BD.

This example also shows that the \opt{} method is not house monotone, see
\refS{Shouse}; if only 1 is elected, the \opt{} method elects C (as the
addition method does; the elimination method elects D).

\phragmen's method  elects CA, and thus gives the same result as
Thiele's addition method.

(With 3 seats, Thiele's \opt{} method and elimination method elect BCD,
while  the addition method and
\phragmen's method elect CAD.)
\end{example}

\begin{example}[Thiele's three unordered methods]\label{ETh12}
\citet{Tenow1912} gave a simpler example
showing that Thiele's three unordered methods are different
(and thus also that Thiele's \opt{} method is not house monotone).

Unordered ballots.
  \begin{val}
  \item [12]AB
  \item [12]AC
  \item [10]B
  \item [10]C
  \end{val}
For 1 seat,
Thiele's \opt{} method elects A, but for 2 seats, the method elects BC.

Thiele's addition method elects A for 1 seat, and AB or AC for 2 seats.

Thiele's elimination method
eliminates first A; hence it elects BC for 2 seats, and B or C for 1 seat.
\end{example}

In the examples above, \phragmen's method yields the same result as Thiele's
(addition) method.
The following examples are constructed to show differences between the
two methods (and to show weaknesses of Thiele's methods).
The first three examples are rather similar, and have two parties
(or factions) that agree on one (or several) common candidates.

\begin{example}[\phragmen's and Thiele's unordered or ordered methods]
  \label{EPhr1899}
\citet{Phragmen1899} compares his method and Thiele's (addition) method 
for unordered ballots
(and argues in favour of his own) using the following example
(here slightly modified to avoid ties).
The example works also for ordered ballots, with the given order.

Unordered or ordered ballots.
\begin{val}
\item [2001] AB$_1$B$_2$B\subb3\dots
\item [1000] A$\rmC_1\rmC_2\rmC_3\dots$
\end{val}
This could be two parties B and C, and a highly respected independent
candidate A, or two factions that both accept the same leader.

In an example like this, with two types of ballots that have one common
candidate, which in the ordered case comes first,
it is obvious that both \phragmen's and Thiele's methods first
elect the common candidate A. 

After that, 
as is easily seen from the formulation in Sections \ref{SPhru1} and
\ref{SPhro1},
\phragmen's method treats the
remaining parts of the two
lists as two disjoint party lists, and thus (see
\refT{TDHondt}) the remaining seats are distributed as by D'Hondt's method.
Hence, 
\phragmen's method will elect candidates in the order
AB\subb1B\subb2C\subb1B\subb3B\subb4C\subb2B\subb5B\subb6C\subb3\dots.
\xfootnote{In the unordered case, all B's are tied, as are all C's, so the
  order among them is arbitrary.}

Thiele's method, on the other hand, reduces the votes of both lists by the
same factor after the election of A.
As \citet{Tenow1912} comments for a related example,
Thiele's method here treats A as two different
persons; if the two  lists had had two different persons A\subb1 and
A\subb2, then, when both were elected, 
Thiele's method would have reduced the votes in the same way.
(In that case, we would have had disjoint party lists and by \refT{TDHondt}
the elected would have been
A\subb1B\subb1A\subb2B\subb2B\subb3C\subb1B\subb4B\subb5C\subb2B\subb6B\subb
7C\subb3\dots.)
Consequently, the candidates are elected in order
AB\subb1B\subb2B\subb3C\subb1B\subb4B\subb5C\subb2B\subb6B\subb7C\subb3\dots

We thus see that Thiele's method favours the larger party. In particular,
with 4 seats, \phragmen's method elects AB\subb1B\subb2C\subb1 and Thiele's
method elects AB\subb1B\subb2B\subb3. 
\end{example}

\begin{example}[\phragmen's and Thiele's unordered or ordered methods]
  \label{EPhr1899C}
\citet{Phragmen1899} considers also  (with unordered ballots)
the following modification of the
previous example.

Unordered or ordered ballots.
\begin{val}
\item [2000] AB\subb1B\subb2B\subb3\dots
\item [1000] AC\subb1C\subb2C\subb3\dots
\item [550] C\subb1C\subb2C\subb3\dots
\end{val}
A calculation shows that \phragmen's method will elect candidates
in the order
AB\subb1C\subb1B\subb2C\subb2B\subb3C\subb3B\subb4C\subb4B\subb5B\subb6\dots,
while Thiele's method will elect in the order 
AC\subb1B\subb1B\subb2C\subb2B\subb3C\subb3B\subb4C\subb4B\subb5B\subb6\dots.
(The next seat that differs is no.~25, which is a tie for Thiele's method.) 

It is not clear that one of these results is ``better'' or ``more fair''
than the other, but
\phragmen{} points out that both methods generally give more seats to B than
to C (as they should, because B got more votes), 
but if there are 2 seats, then Thiele's
method will give these to AC; \phragmen{} concludes that at least it is not an
advantage of Thiele's method that for 2 seats it puts C ahead of B, while
for larger numbers of seats it puts B ahead of C.
\xfootnote{I do not know whether the same can happen with \phragmen's
  method in another situation.}

In this simple example, it is possible to analyse exactly what happens also
for a large number of seats (and candidates, \cf{} \refSS{SSMora-party} and
\cite{MoraO}).
Let $b=b(c)$ be the number elected from the B party before $c$ candidates
are elected from the C party, for $c\ge1$.

With \phragmen's method, the election of A gives a load $2/3$ to the first
group of ballots, and $1/3$ to the second. Thus, when C\sss{c} is elected,
the load on each ballot in the first group is $(b(c)+\frac{2}3)/2000$ and 
the load on each ballot in one of the two last groups is
$(c+\frac{1}3)/1550$.
Consequently, $b=b(c)$ is the largest integer such that
\begin{equation}
\frac{b+\frac{2}3}{2000}
\le
\frac{c+\frac{1}3}{1550},
\end{equation}
i.e.
\begin{equation}\label{ephr1899c-phr}
  b(c)=\lrfloor{\frac{2000}{1550}\Bigpar{c+\frac{1}{3}}-\frac{2}{3}}
=\lrfloor{\frac{40}{31}c-\frac{22}{93}}.
\end{equation}

With Thiele's method, C\sss{c} gets elected with $1000/(c+1)+550/c$ votes,
while B\sss{b} was elected with $2000/(b+1)$ votes. Thus $b=b(c)$ is the
largest integer such that
\begin{equation}
  \frac{2000}{b+1} \ge \frac{1000}{c+1}+\frac{550}{c},
\end{equation}
and thus
\begin{equation}\label{ephr1899c-th}
  b(c)=\lrfloor{\frac{2000}{\frac{1000}{c+1}+\frac{550}{c}}-1}
=\lrfloor{\frac{40}{31}c-\frac{161}{961}-\frac{8800}{961(31c+11)}}.
\end{equation}
The claims above are easily verified from \eqref{ephr1899c-phr} and
\eqref{ephr1899c-th}. 
Note that $\frac{22}{93}\doteq0.2366 >\frac{161}{961}\doteq0.1675$.
Furthermore, 
since the last fraction in \eqref{ephr1899c-th} tends to 0 as $c\to\infty$, 
it follows
that except for the first seats, 
both methods elect B and C in sequences that are
periodic with period 71, with 40 B and 31 C.
Moreover, it is easily verified that
for $c>1$,
\eqref{ephr1899c-phr} and \eqref{ephr1899c-th} differ if and only if 
$c\equiv11$ or $18\pmod{31}$, and hence the two methods differ
if and only if $\MM=2$ or $\MM\equiv 25$ or $41\pmod{71}$; when they
differ, except for $\MM=2$, \phragmen's method favours C and
Thiele's method favours B (except that for $\MM=25$, as said, Thiele's
method gives a tie for the last seat).
\end{example}

\begin{example}[\phragmen's and Thiele's unordered or ordered methods]
\label{ECassel}
\citet{Cassel} gave an example similar to \refE{EPhr1899} (using unordered
ballots). 

Unordered or ordered ballots.
9 seats.
\begin{val}
\item [4200]ABCDEFGHI
\item [1710]ABCUVWXYZ
\end{val}
With both \phragmen's method and Thiele's  method,
the 3 common candidates ABC are obviously
elected first (in arbitrary order in the case of unordered ballots).

With \phragmen's method, the 6 seats after the 3 first are distributed as
for two disjoint parties (again easily seen from the formulation in
Sections \ref{SPhru1} and \ref{SPhro1}), 
\ie, as with D'Hondt's method (see \refS{Sparty}). 
The result is 4 of these seats to the larger ``party'' and 2 to the smaller
(since $4200/4 > 1710/2 > 4200/5$); \ie, ABCDEFGUV. 

With Thiele's method, however, the last 6 seats go to DEFGHI; 
thus the largest party (faction) 
gets all 9 seats, again as by D'Hondt's method regarding ABC as different
persons on the two ballots.
(The last elected, I, is elected with $4200/9\doteq466.67$ votes against 
$1710/4=427.5$ for
U.)
\end{example}

%
%

\begin{example}[\phragmen's and Thiele's unordered or ordered methods]
\label{E1913.5}
  An example from the commission report \cite{bet1913} (there with unordered
  ballots):

Unordered or ordered ballots. 2 seats.
\begin{val}
\item [21]AB
\item [20]AC
\item [12]D
\end{val}
Again there are two parties, with the first split into two factions, both
recognizing the same leader.
Note that since the first party gets more than three times as many votes as
the second, an election without splitting the vote between two lists would
give both seats (and also a third, if there were one)  to the first party by
any of the methods considered  here (since they would reduce to D'Hondt's
method by \refT{TDHondt}). 

\phragmen's method gives the first seat to A (41 votes), and then the second to
B (with C next in line);
the \jfr{s} are $W_\rmB=21/(1+\frac{21}{41})=\frac{861}{62}\doteq13.89$,
$W_\rmC=20/(1+\frac{20}{41})=\frac{820}{61}\doteq13.44$
and $W_\rmD=12$.

Thiele's unordered or ordered method gives the first seat to A, which
reduces the votes for the first two types of ballots so much that the second
seat goes to D, with 12 votes against 10.5 for B and 10 for C.
Hence, AD are elected, and the larger party gets only one seat.
\end{example}

\begin{example}[\phragmen's and Thiele's unordered or ordered methods]
\label{ECassel.p53}
\citet[p.~53]{Cassel} gave  a similar example (with unordered
ballots).

Unordered or ordered ballots.
\begin{val}
\item [90]A\subb1A\subb2A\subb3\dots
\item [90]B\subb1B\subb2B\subb3\dots
\item [90]B\subb1C\subb1C\subb2C\subb3\dots
\end{val}
There are three different parties (factions) of equal size, 
but the C party supports B\subb1.

With Thiele's method, evidently B\subb1 is elected first.
However, as Cassel notes,
once B\subb1 is elected, the B party
has no advantage at all of the partial support from the C party, 
and parties A and B will get equally many seats (possibly tying for the last
one),
while the C party pays for the support of B\subb1 as much as if the party
had elected one of its own, and will get one seat less than the others (up
to ties for the last place). 
Hence, the candidates are elected in order 
B\subb1A\subb1(A\subb2B\subb2C\subb1)(A\subb3B\subb3C\subb2)\dots, where the
parentheses indicate ties.
For example, with 5 seats, the elected are 
2 A, 2 B and 1 C, and with 6 seats, the additional seat is a tie between all
three parties.

With \phragmen's method, B\subb1 is still elected first, but this gives only
a place number $\frac12$ to each of the last two lists.
It follows, \eg{} by considering loads as in \refR{Rload}, that
the candidates are elected in order
B\subb1A\subb1(B\subb2C\subb1)A\subb2(B\subb3C\subb2)A\subb3\dots.
For example, with 5 seats, the elected are 
2 A, 2 B and 1 C, as by Thiele's method, but with 6 seats, the additional
seat is a tie between B and C only; thus B gets some advantage from the
partial support from C, and C does
not pay quite as much for it as with Thiele's method.
\end{example}

%
%
%
%

\begin{example}[\phragmen's and Thiele's unordered or ordered methods]
\label{ETenow96}
 \citet{Tenow1912} gave the following example (with unordered ballots):

Unordered or ordered ballots.
8 seats. 
  \begin{val}
  \item [21] ABCDH
  \item [21] ABCEI
  \item [21] ABCFJ
  \item [21] ABCGK
  \item [12] OPQRS
  \end{val}
In this example, there are two disjoint parties, but one party is split into
four different lists. The second party has $12/96 =1/8$ of the votes, so a
proportional representation between the parties would give 7 seats to the
first party and 1 to the second, electing for example ABCDEFGO (with many
equivalent choices, by symmetry, in the unordered case).

This is also the result of \phragmen's method. 

However,  Thiele's \opt{} method, addition  method and elimination method
in the unordered case
and Thiele's ordered method in the ordered case all elect
ABCDEFOP, with two seats to the second party.
\xfootnote{
It is natural that parties
that split their votes on several lists may be hurt by this, and that
happens also with \phragmen's method in other examples, 
see \eg{} \refE{Esplit}, so one should
perhaps be
careful with drawing conclusions from this example.
}
\end{example}

It seems that Thiele's methods reduce the votes for lists with common names
too much in the examples above. 
On the other hand, 
the opposite seems to  happen with Thiele's ordered method
when two list have a common name in a lower
position; in this case the method does not reduce the votes enough.

\begin{example}[\phragmen's and Thiele's ordered methods]\label{E1913.16}
    An example from the commission report \cite{bet1913}:

Ordered ballots. 3 seats.
\begin{val}
\item [34]AC
\item [34]BC
\item [32]D
\end{val}
With Thiele's ordered method, the three seats go to ABC (34 votes each,
since for the third seat, C has $34/2+34/2$ votes),
while D is not elected (32 votes).

\phragmen's method elects instead ABD;
for the third seat, the two first lists have place number 1 each, so C has
\jfr{} $(34+34)/3\doteq22.67$, while D has 32.
This seems to be a more proportional result.
\end{example}

\begin{example}[\phragmen's and Thiele's ordered methods]\label{E1913.17}
    Another example from the commission report \cite{bet1913}:

Ordered ballots. 4 seats.
\begin{val}
\item [33]AB
\item [32]AC
\item [18]DF
\item [17]EF
\end{val}
This example combines the features of Examples \ref{E1913.5} and \ref{E1913.16}.

With Thiele's ordered method, A is elected first (65 votes), and then
D, E, F are elected to the remaining seats (with 18, 17 and 17.5 votes,
against 16.5 for B and 16 for C).
Thus the ABC party, with 65\% of the votes, gets only 1 seat, while the
smaller DEF party gets 3.

\phragmen's method also elects A first (65
votes), but then B and C are elected with \jfr{s}
$W_\rmB=33/(1+\frac{33}{65})=\frac{2145}{98}\doteq21.89$ 
and $W_\rmC=32/(1+\frac{32}{65})=\frac{2080}{97}\doteq21.44$ against
$W_\rmD=18$ and $W_\rmE=17$; finally, D takes the last seat.
Elected: ABCD.
\end{example}

Furthermore, in the following  examples, Thiele's methods seem to
reduce the votes too little for candidates that appear both on a list where
someone else already has been elected, and on a list without any elected.
Moreover, and perhaps more seriously,
this yields possibilities for tactical voting, where a party may
gain seats by carefully splitting its votes on different lists.

\begin{example}[\phragmen's and Thiele's unordered methods]\label{Etactic}
  An example by \citet{Tenow1912}:

Unordered ballots. 3 seats.
\begin{val}
\item [37]ABC
\item [13]KLM
\end{val}
In this case, there are two disjoint party lists, 
so both \phragmen's and Thiele's
methods reduce to D'Hondt's method (see \refT{TDHondt})
and, say, ABK are elected. Thus the larger party gets 2 seats and the smaller 1.

However, with Thiele's method,
the larger party may cunningly split their votes on five different
lists as follows:
\begin{val}
\item [1]A
\item [9]AB
\item [9]AC
\item [9]B
\item [9]C
\item [13]KLM
\end{val}
Then A gets the first seat (19 votes), 
and the next two go to B and C (in some order)
with 13.5 votes each, beating KLM with 13. Thus the large party gets all seats.
The reason is obviously that only some of the ballots contain A, and thus
get their votes reduced for the following seats.

Nevertheless, if news of this scheme is leaked to the small party, they may
thwart it by letting two persons vote BKLM. The resulting election is
\begin{val}
\item [1]A
\item [9]AB
\item [9]AC
\item [9]B
\item [9]C
\item [2]BKLM
\item [11]KLM
\end{val}
Now, still with Thiele's method, B gets the first seat (20 votes); then
C gets the second (18 votes); finally, K, say, gets the third seat with 12
votes, beating A (10 votes). Thus, KLM have gained a seat by voting for the
enemy!
(Cf.~\refS{Smono2}.)

I do not know whether there are in this example even more cunning schemes
that can not be thwarted, and what the best strategies of the two parties
are if they do not know in advance how the others vote. (Possibly the best
strategies are mixed, \ie{} random.) Thiele's method thus leads to
interesting mathematical problems in combinatorics and game theory, but for
its practical use as a voting method, this example of sensitivity to
tactical voting is hardly an asset.

\phragmen's method seems to be much more robust in this respect. In the
example above, KLM can always get one seat by voting KLM, regardless of how
the voters in the other party vote, see \refT{TPhPC}.
\end{example}

\begin{example}[\phragmen's and Thiele's ordered methods]  \label{Etactic-o}  
The opportunities for tactical voting shown by Thiele's unordered method in
\refE{Etactic} exist for
Thiele's ordered method as well, in a somewhat simpler and perhaps more
obvious form.
The following example is (partly) from the commission report \cite{bet1913}.

Ordered ballots. 2 seats.
\begin{val}
\item [61]AB
\item [39]CD
\end{val}
Again, there are two disjoint party lists, so both \phragmen's and Thiele's
methods reduce to D'Hondt's method (see \refT{TDHondt})
and AC are elected. Thus the parties get 1 seat each. (Since both have more
than a third of the votes.)

However, with Thiele's method,
the larger party may split their votes as follows:  
  
\begin{val}
  \item [41]AB
  \item [20]B
  \item [39]CD
  \end{val}
With Thiele's  method,
the first seat goes to A (41 votes, against 20 for B and 39 for C).
For the second seat, B  has $41/2+20=40.5$ votes, and beats C. Elected: AB.

In this example, C thus is not elected, in spite of being
supported by $39\%$ of the voters. (See further \refS{Sprop}.)
Unlike \refE{Etactic} with the unordered method, in this case there is
nothing that the CD party can do to prevent AB from being elected; 
even if they help B to be elected to the
first seat, A will still beat them to the second seat.

\phragmen's method is more robust
(as for the unordered methods); using it,
CD will always get (at least) one seat by voting CD,
regardless of how the others vote,
see \refT{TPhPC}.
In the example above, 
\phragmen's method elects A to the first seat. This gives place number
1 to the first list, and for the second seat, B has an \jfr{}
$(41+20)/2=30.5$ by \eqref{wio}. Hence C gets the second seat.
Elected: AC.
\end{example}

\begin{example}[Thiele's ordered method]\label{Etactic-o2}
  Consider the following variation of the second election in \refE{Etactic-o}.

Ordered ballots. 3 seats. Thiele's method.
\begin{val}
  \item [30]AB
  \item [15]B
  \item [55]CD
  \end{val}
C gets the first seat, but then D has only $55/2=27.5$ votes and the second
and third seats go to A and B with 30 votes each.  Elected: ABC.

Thus the CD party gets only 1 seat in spite of a majority of the votes.
\end{example}

Also \phragmen's method can behave strangely, as seen in the following example.
Further examples of non-intuitive behaviour 
are given in Sections \ref{Smono2} and \ref{Sconsistency}.

\begin{example}[\phragmen's ordered method]\label{ELanke}
This  example
was constructed by \citet{Lanke:egendomlighet}.

Ordered ballots. 2 seats. \phragmen's method.
\begin{val}
\item [15]AB
\item [12]BX
\item [14]CY
\item [3]ZW
\end{val}
Phragmén's method gives the first seat to A (\jfr{} 15) and the second to C
(\jfr{} 14, against  $27/2=13.5$ for B and 3 for Z).
Elected: AC.

Now suppose that the three ZW voters change their minds and vote AC:
\begin{val}
\item [15]AB
\item [12]BX
\item [14]CY
\item [3]AC
\end{val}
Then the first seat goes to A (\jfr{} 18).
For the second seat, the four groups of ballots have place numbers
$\frac{5}6,0,0,\frac16$, giving B and C the \jfr{s}
\begin{align*}
  W_\rmB=\frac{15+12}{1+5/6}=\frac{162}{11}\doteq14.73
\quad\text{and}\quad
  W_\rmC=\frac{14+3}{1+1/6}=\frac{102}{7} \doteq14.57;
\end{align*}
thus the second seat goes to B.

Thus C lost the seat because of 
the new votes for AC!
(Cf.~\refS{Smono2}, and note that
this example does not contradict \refT{TmonoPhro}, since the change here
involves also A and not only C.)

The explanation of this seems to be that \phragmen's method implicitly
regards A and B  (and  X) as a party.
The new votes AC help C, but they are also extra votes for A that help the
AB ``party'', and thus (since A already is elected) B.
(In both cases above, the first seat goes to A.
The extra votes for A in the second case, \ie, after the change,
means that the place numbers get smaller so each vote is reduced less, and
more votes are counted for B in the second round.)
The calculations above show that,  perhaps surprisingly, the indirect gain
for B is larger than the direct gain for C.

Nevertheless, it seems strange that more votes on
the winning combination AC will make someone else elected, and
\citet{Lanke:egendomlighet} regards this example is an argument  against
\phragmen's ordered method.
(On the other hand, as discussed
in \refS{Smono2}, it seems impossible for any election method 
with ordered ballots to avoid similar examples.)
\end{example}

The final example involves the combination of methods used in Sweden
1909--1921. 
\begin{example}[Ranking Rule + Thiele's unordered method]\label{Erank}
An example from the commission report \cite{bet1913}.

Ordered and unordered ballots.
\begin{val}
\item [570]ABC
\item [290]BC
\item [20]C
\end{val}

With ordered ballots, the candidates are obviously elected in order A, B, C
by any method considered here,
and with unordered ballots, they are elected in the opposite order C, B, A.
Both results are obviously fair by all standards, but a problem arises with the
combination of ordered and unordered rules used in Sweden 1909--1921, see
\refApp{Ahistory}. With those rules, A would get the first seat (because of
the ordered Ranking Rule in \refApp{Ahistory}), but then the unordered
Thiele's method would be used, and the remaining candidates would be elected
in order C, B. Thus, if 2 were elected, it would be AC, which hardly seems
fair by any interpretation of the ballots.

This example illustrates a situation that actually occurred in some cases
\cite{bet1913}.
In fact, it could easily happen if the party organization recommends a list
ABC, in order of priority, and most of the voters like this and vote as the
party says, but a sizeable minority prefers B and therefore omits A, but
still keeps C after B on the ballot (perhaps not fully aware of the
implications of keeping C).

Note also that if 20 of the BC voters had voted ABC instead, then the
Ranking Rule (\refApp{Ahistory})
would have applied to both A and B, and thus with 2 seats, AB
would have been elected. Hence, the voters favouring B have thus in reality
eliminated their candidate by voting BC instead of ABC!

In the opposite direction, suppose that there are also 300 votes on other
lists with the same party name but other candidates
(other factions, or another party in
electoral alliance, see \refApp{Ahistory});
then the Ranking Rule would not be applicable at all, and if the lists above
still would get 2 seats, they would go to C and B, while the party's top
candidate A would not be elected in spite of a majority of these voters
preferring A. And if the same list only appoints 1 candidate, it would be C
and not A. This can be seen as a form of decapitation, see \refS{Sdecap}.
A real example of this type from the general election in 1917
(Skaraborg County south)
is discussed in \cite{Tenow1918}.
  \end{example}

\section{Monotonicity}\label{Smono2}

It is well-known that no perfect election method can exist.
For example, the Gibbard--Satterthwaite theorem \cite{Gibbard,Satterthwaite}
says briefly that every deterministic
election method for ordered ballots
is susceptible to tactical voting, \ie, there exist situations where 
a voter can obtain a result that she finds better by not voting according to
her true preferences.
\xfootnote{Assuming only that there are at least three candidates that can
  win, and that the 
  election method is not a dictatorship where the result is determined by 
a single voter.
See also the related Arrow impossibility theorem \cite{Arrow}.}
A simple and well-known (also in practice)
example of this is when a single candidate is
elected by simple 
plurality (First-Past-The-Post, \refApp{ABV}); 
recall that this includes both \phragmen's and
Thiele's ordered methods when $\MM=1$, see \refT{T1}.
If candidates B and C have equally strong support, and candidate A a much
weaker, then a voter that  prefers them in order ABC may do better by
voting BAC, since this might help getting B elected instead of C, while the
voter's first 
choice A is without a chance in any case.

Nevertheless, some election methods are more imperfect than others.
Particularly disturbing are  cases of non-monotonicity when
a candidate can become elected if some votes are changed to less favourable for
the candidate; equivalently, changing some votes in favour of an elected
candidate 
A might result in A losing the election.
Unfortunately, this can happen in Thiele's ordered method.
\xfootnote{\label{f-monoSTV}%
This type of non-monotonicity is also a well-known problem with
  STV
(\refApp{ASTV}), where it may occur in particular because of the eliminations.
This can happen even in a single-seat election, when STV 
reduces to Alternative Vote (regardless of the
version used for transfering surplus, since there is no surplus when
$\MM=1$).
A simple example is 6~ABC, 6~BCA, 5~CAB, where C is eliminated and A is
elected;
however, if A attracts more support from two voters that change from BCA to
ABC,
so that the votes are 8~ABC, 4~BCA, 5~CAB, then B is eliminated and C is
elected instead of A, see further \cite{Woodall:monotonicity}.
See also the impossibility theorem by \citet{Woodall:impossibility}, but
note that \phragmen's and Thiele's methods do not satisfy property (4) there.
}

\begin{example}[Non-monotonicity for Thiele's ordered method]\label{E-monoTh}
\quad

Ordered ballots. 2 seats. Thiele's method.
\begin{val}
\item [5]ABC
\item [2]ACB
\item [9]BCA
\item [8]CAB
\end{val}
With Thiele's method, first B is elected (9 votes), and then C is elected to
the second seat
(with 12.5 votes against 7 for A), so the result is BC. 
However, suppose that the two ACB voters move A to the second place
(although they really favour A):
\begin{val}
\item [5]ABC
\item [9]BCA
\item [10]CAB
\end{val}
Then C will be elected first (10 votes), followed by A (10 votes, against 9
for B), so the result is AC. 
Hence, these voters have got A elected 
by moving A to a lower place.
\end{example}

The type of non-monotonicity in Example \ref{E-monoTh} cannot happen with
\phragmen's ordered method, or by any of \phragmen's and Thiele's unordered
methods. 
For the unordered methods, 
this was shown by \citet{Phragmen1896} for a class of methods that includes
his method, see \refT{TPh1896}; he did not consider Thiele's
methods, but the same proof applies to them too, and yields the following
result for the unordered methods considered so far. (The proof applies also
to Thiele's methods with other satisfaction functions, 
except that a minor complication may occur with ties if some $w_n=0$ as for
the weak method.)
However, note that there may be other types of non-monotonicity; 
for Thiele's unordered method, one example is given in \refE{Etactic}.

\begin{theorem}[partly \citet{Phragmen1896}]
  \label{Tmono-u}
\phragmen's unordered method and Thiele's three unordered methods
satisfy the following:
Consider an election and let A be one of the candidates.
Suppose that the votes are changed such that A gets more votes, either
from new voters that vote only for A, or from 
voters that add A to their ballots (thus changing the vote from some set
$\gs$ to $\gs\cup\set A$), but no other changes are made
(thus all other candidates receive exactly the same votes as before).
Then this cannot hurt A; if A would have been elected before the change,
then A will be elected also after the change.
\xfootnote{There is no problems with ties. The proof shows that there are
  strict improvements, so if A was tied before the change, 
then A will certainly win after the change.
(This is called \emph{positive responsiveness}, see \eg{}
\cite{May1952}.)
}\,
\xfootnote{Note that the elected set is not necessarily the same. Adding
  A to some ballots can affect, for example, whether B is elected or not
  together with A. This  applies to all the methods.
For example, if we modify Examples \ref{Enonmono1} and \ref{Enonmono2} below 
by adding only a vote on A\sss1, 
then this results in replacing A\sss2 by C or B.}
\end{theorem}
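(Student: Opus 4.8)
The plan is to run the original election and the modified election in parallel and to exploit the fact that the only change---extra ballots marked only \textrm A, and \textrm A appended to some ballots---affects no candidate other than \textrm A until \textrm A is itself elected. This is exactly property \ref{Puadd}: for the two sequential methods (\phragmen's method and Thiele's addition method) the quantity governing each seat---the \jfr{} $W_i$ of \eqref{wi} for \phragmen{}, and the reduced count $\sum_{\gs\ni i}v_\gs\,w_{|\gs\cap S|+1}$ for Thiele, where $S$ is the set elected so far---depends, for a candidate $i\neq\textrm A$, only on the ballots restricted to $S$, and this restriction is insensitive to an \emph{unelected} \textrm A. So I would prove by induction on the seats the coupling invariant: as long as \textrm A is unelected in both runs, the two runs have elected the same candidates, every $W_i$ (resp.\ reduced count) with $i\neq\textrm A$ is identical in the two runs, while \textrm A's quantity in the modified run is at least its value in the original run. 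The inductive step is immediate: at a seat where the original run elects some $B\neq\textrm A$, either \textrm A has now overtaken $B$ in the modified run, so \textrm A is elected even earlier, or the same $B$ (still the maximum over the unchanged field) wins and the invariant is restored; and at the seat where the original run elects \textrm A, the modified run elects \textrm A too, since \textrm A's quantity has only grown while every competitor's is unchanged. Hence \textrm A is elected in the modified run whenever it is in the original, which is the claim for these two methods; strict improvement (the positive-responsiveness footnote) follows because \textrm A's inequality is strict once a genuine \textrm A-vote is added.

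The one nonroutine point for \phragmen's method is the clause ``\textrm A's quantity has only grown'', i.e.\ that $W_{\textrm A}$ does not drop, equivalently that the required voting power $t_{\textrm A}=1/W_{\textrm A}$ of \eqref{ti} does not rise. This is cleanest in the load picture of \refR{Rload}: electing \textrm A means raising the load on every ballot counting for \textrm A to a common level $t_{\textrm A}$, adding total load $1$. Each of \textrm A's new ballots carries current load at most the present level, which is at most the old $t_{\textrm A}$ (loads are fixed at earlier, lower levels and \textrm A is still unelected); so at the old level the new ballots already absorb some nonnegative load beyond the old unit, whence the level needed to absorb exactly $1$ can only decrease. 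For Thiele's addition method the corresponding monotonicity is trivial, since adding \textrm A-ballots only contributes nonnegative terms to \textrm A's count and alters none of the existing denominators.

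Thiele's \opt{} method I would treat separately, by a short exchange argument rather than a coupling. Writing $F$ and $F'$ for the satisfaction under the original and modified votes, note $F'(\cS)=F(\cS)$ whenever $\textrm A\notin\cS$ (an unelected \textrm A contributes nothing to $f(|\gs\cap\cS|)$), while $F'(\cS)\ge F(\cS)$ whenever $\textrm A\in\cS$ (each added \textrm A-vote raises some $f(|\gs\cap\cS|)$ by a nonnegative $w$). Let $\cS^*$ be a maximizer for the original votes with $\textrm A\in\cS^*$, and let $\cT^*$ be any maximizer for the modified votes. If $\textrm A\in\cT^*$ we are done; otherwise $F'(\cS^*)\ge F(\cS^*)\ge F(\cT^*)=F'(\cT^*)\ge F'(\cS^*)$, forcing equality throughout, so $\cS^*$ is itself a modified maximizer and contains \textrm A. Either way some modified-optimal set contains \textrm A.

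The delicate case, and the one I expect to be the main obstacle, is Thiele's elimination method, because there appending \textrm A to a ballot \emph{does} change the others' scores: while \textrm A survives it raises the count $k_\gs$ of remaining names, so each ballot-mate's contribution $v_\gs/k_\gs$ \emph{decreases}. The exact coupling therefore breaks---the two runs may already eliminate different candidates at the first step---and the visited states diverge. What does survive is a state-wise domination lemma: at every remaining set $R\ni\textrm A$, \textrm A's score is no smaller, and every other score no larger, than its value for the original votes at the same $R$; hence if the modified run would eliminate \textrm A at some $R$, the original scores would eliminate \textrm A at that same $R$. My plan is to promote this to a proof by induction on the number of candidates: the modified run's first elimination removes some $Y\neq\textrm A$ (apply the lemma at the full candidate set, which both runs do visit), after which the modified run is precisely the elimination method for the modified votes on the reduced candidate set---an \textrm A-augmentation of the original votes there---so the induction closes provided \textrm A still survives the \emph{original} method once $Y$ is deleted as a candidate. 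Establishing this deletion-stability for the particular $Y$ chosen by the modified run is the crux; I would aim to derive it from the same domination lemma, and absent a short argument this is where the work concentrates. Throughout, ties require only the bookkeeping indicated in the footnotes, and the strict inequalities above give positive responsiveness.
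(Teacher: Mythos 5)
Your proofs for \phragmen's unordered method, Thiele's addition method and Thiele's \opt{} method are correct, and they are essentially the paper's own arguments: the paper handles the two sequential methods by exactly your coupling invariant (as long as A is unelected, every other candidate's voting power or vote count is identical in the two runs while A's has weakly increased, so A is elected no later than before), and the \opt{} method by exactly your exchange argument. Your verification that $t_\rmA$ cannot rise (equivalently, that $W_\rmA$ cannot fall) when A-ballots are added is a detail the paper leaves implicit; your load-level/mediant argument for it is sound.

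For Thiele's elimination method your proposal stops at the ``deletion-stability'' step, and rightly so: that step is false, and so --- perhaps surprisingly --- is the theorem's claim for this method. Consider one seat and the original profile consisting of 54 ballots A, 32 ballots B, 20 ballots C, 20 ballots AC and 40 ballots BC. First-round scores are A: $54+10=64$, B: $32+20=52$, C: $20+10+20=50$, so C is eliminated; then A beats B by $74$ to $72$, and A is elected. Now let 5 of the B-voters add A to their ballots, giving 27 ballots B and 5 ballots AB. The first-round scores become A: $66.5$, B: $49.5$, C: $50$; the changed ballots now split between A and B, so B's score drops below C's and B is eliminated first, after which A loses to C by $69$ to $70$ in the second round. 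Thus C is elected and A is not: adding A to ballots has un-elected A. In your terms, the domination lemma holds but cannot be promoted: the modified run is steered into the state $\{\textrm{A},\textrm{C}\}$, which the original run never visits and at which A already loses under the \emph{original} votes; this is precisely the failure of deletion-stability for $Y=\textrm{B}$.

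You should also know that the paper's own proof of this case commits exactly the oversight you avoided: it regards each elimination round as an application of the \opt{} method and chains the monotonicity result for the \opt{} method from round to round, which is legitimate only if the two runs pass through the same remaining candidate sets; the example above shows that they need not, and that the conclusion then fails. (The statement and proofs for the other three methods are unaffected. Note also that the elimination method does satisfy the theorem for the restricted change consisting only of new ballots marked A alone: such ballots alter no other candidate's score at any state, so the two elimination sequences then coincide.)
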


\begin{proof}
For Thiele's \opt{} method, note that the total satisfaction is unchanged
for every set $\cS$ of candidates that does not include A, but it is increased
for every set of candidates that includes A. Hence, if one set including A
maximized the total satisfaction before the change, then the same holds
after the change.
\xfootnote{The maximum set is not necessarily the same.}
Thus A will still be elected.

For \phragmen's unordered method and Thiele's addition method, we note that
as long as A is not elected, then all other candidates have exactly the same
voting power or vote count before and after the change, while this has
increased for A.
Hence, if A would have been elected in round $\ell$ before the change, then
after the change either A has already been elected, or A will be elected in
the same round $\ell$.

Finally, for Thiele's elimination method, note that if there are $N$
candidates, the elimination method can be seen as repeatedly using the
\opt{} method to select $\MM=N-1, N-2, \dots$ candidates, until the desired
number remains, after each round eliminating the non-selected candidate from
all further consideration. The result just proved for the \opt{} method
shows that 
if A survives all rounds before the change, then A survives all rounds also
after the change.
\end{proof}

For \phragmen's ordered method we have the following corresponding result.
Note that, as shown by \refE{E-monoTh}, this theorem fails for Thiele's
ordered method.

\begin{theorem}
  \label{TmonoPhro}
\phragmen's ordered method satisfies the following:
Consider an election and let A be one of the candidates.
Suppose that some votes are changed in favour of A,
either by adding a new ballot containing only A, 
by adding A to an existing ballot 
at an arbitrary place (if A did not already exist on the ballot),
or by moving A to a higher position (if A already existed on the ballot); in
all cases without changing the other candidates and their  order.
Then the change cannot hurt A; if A would have been elected before the
change, then the same is true after the change.
\end{theorem}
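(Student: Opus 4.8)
The plan is to work with the continuous‑time formulation of \refR{Rtimeo}: at time $t$ every ballot has voting power $t$, each ballot feeds its current top candidate (the first name on it not yet elected), and a candidate is elected at the first time its accumulated voting power reaches $1$. Write $V_i(t)$ for the voting power of candidate $i$ at time $t$; this is continuous, piecewise linear and nondecreasing, and $i$ is elected at $T_i:=\inf\{t:V_i(t)=1\}$, the candidates being elected in the order of their $T_i$. A type $b$ of ballot, carrying $v_b$ votes, begins to feed A exactly when every candidate listed before A on $b$ has been elected; writing $s_\rmA^{(b)}$ for that time (with $s_\rmA^{(b)}=\infty$ if A is absent from $b$ or some earlier name is never elected),
\begin{equation}
V_\rmA(t)=\sum_b v_b\,\bigl(t-s_\rmA^{(b)}\bigr)_+,\qquad t\le T_\rmA,
\end{equation}
where $s_\rmA^{(b)}=\max\{\tau_c: c \text{ precedes A on } b\}$ and $\tau_c$ is the election time of $c$. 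Writing $E$ for the original election and $E'$ for the modified one, decorating the latter's quantities with a prime, the goal reduces to showing that A reaches voting power $1$ in $E'$ no later \emph{in the election order} than in $E$; this is exactly what ``A elected before $\Rightarrow$ A elected after'' asserts.

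First I would reduce the three kinds of favourable change to a single elementary one. Adding a ballot containing only A, or appending A to the very end of an existing ballot, leaves every other candidate's trajectory completely unchanged (a ballot on which everyone but A is elected feeds no one in $E$ and feeds only A in $E'$); hence no candidate loses any feed, $V'_\rmA(t)\ge V_\rmA(t)$ for all $t$, and A can only be elected earlier. Adding A at an interior position is the composite of appending A at the end and then moving A upwards, and moving A up by several positions is a composite of moves by one position. Since each such step is again a ``favourable to A'' change, chaining the implication shows it suffices to treat the single \emph{elementary swap} in which A exchanges places with the candidate $c^*$ immediately preceding it on one ballot $b_0$, everything else fixed.

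For the elementary swap the key structural fact is that $E$ and $E'$ are \emph{identical} up to the first time $\theta$ at which $b_0$ has elected all names preceding the swapped pair, since until then $b_0$ behaves identically in the two elections and it is the only ballot that differs. At $\theta$ the feed of $b_0$ is redirected: in $E$ it now goes to $c^*$, in $E'$ it goes to A. The plan is then to prove, by induction over the subsequent election events taken in increasing time order, the pointwise comparison $V'_\rmA(t)\ge V_\rmA(t)$ up to A's election, from which $T'_\rmA\le T_\rmA$ follows; one must simultaneously track that no more candidates precede A in $E'$ than in $E$, so that A keeps its place among the first $\MM$.

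The main obstacle is precisely this comparison in the presence of a \emph{feedback cascade}. Demoting $c^*$ can delay $c^*$'s own election, and if $c^*$ precedes A on \emph{other} ballots this delays the moment those ballots begin to feed A, working against the direct gain that $b_0$ provides; thus $s_\rmA^{(b)}$ need \emph{not} decrease ballot by ballot, and one cannot argue termwise. The substance of the proof is to show that the direct transfer of $b_0$'s feed from $c^*$ to A always dominates whatever is lost through the cascade — most naturally by exhibiting a monotone coupling of the two processes in which the perturbation only ever moves feed \emph{towards} A, so that $V'_\rmA-V_\rmA$ stays nonnegative and the count of candidates ahead of A does not rise. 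That such feedback genuinely occurs, and that the analogous conclusion fails as soon as a \emph{second} candidate is also favoured, is exactly what \refE{ELanke} illustrates; the hypothesis that only A, and no other candidate, is touched is what makes the cascade controllable.
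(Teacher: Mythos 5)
Your framework is the paper's own (the continuous-time voting power of \refR{Rtimeo}), and your reduction of the general change to (i) an added A-only ballot or A appended at the end, plus (ii) a chain of elementary adjacent swaps, is legitimate: each intermediate profile differs from the previous one by a change covered by the statement, so the implication chains across the steps (modulo the usual caveat about ties). Case (i) is essentially fine, although your assertion that it leaves every other candidate's trajectory ``completely unchanged'' for all $t$ is not literally true --- once A is elected earlier in $E'$, ballots on which A precedes others start feeding those others earlier --- but the two processes do coincide on all non-A candidates up to A's election in $E'$, which is all you need. The genuine gap is case (ii). Reducing to a single adjacent swap does not diminish the difficulty at all: the feedback cascade you describe (demoting $c^*$ delays $c^*$'s election, which delays every ballot on which $c^*$ precedes A from feeding A) is fully present for one swap, and at exactly this point your text stops proving and starts hoping. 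You assert that the direct transfer of $b_0$'s feed from $c^*$ to A ``always dominates whatever is lost through the cascade,'' to be shown ``most naturally by exhibiting a monotone coupling,'' but no coupling is constructed and no inequality is derived. That dominance claim essentially \emph{is} the theorem; identifying it as the crux is not proving it.

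For comparison, the paper closes precisely this gap with a two-sided induction over the merged sequence of election times, namely claim \eqref{cl}: as long as A is unelected in the changed election, (a) $f_i(t)\ge\fx_i(t)$ for every $i\neq\rmA$, and (b) $f_\rmA(t)\le\fx_\rmA(t)$. Part (a) rests on the observation that a ballot currently valid for $i\neq\rmA$ after the change cannot have A among the already elected names preceding $i$; hence the names preceding $i$ are exactly the same as before the change, they are elected in the original election too (by the induction hypothesis), and the ballot is valid for $i$ there as well, so $i$'s growth rate after the change never exceeds its original one. The decisive step (b) --- the statement your coupling was supposed to deliver --- is then obtained \emph{not} by tracking the cascade but by a conservation argument: the voting power consumed from a ballot by time $t$ equals $\min(t,t_b)$, where $t_b$ is the time its last name is elected; part (a) forces every ballot to be consumed at least as early after the change, so $\sum_i\fx_i(t)\ge\sum_i f_i(t)$, and subtracting the inequalities of (a) for $i\neq\rmA$ leaves $\fx_\rmA(t)\ge f_\rmA(t)$. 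Finally, the bookkeeping you mention but do not carry out (that no more candidates precede A after the change) follows by applying (a) at $t=\tA$: every candidate elected before A in the changed election was by then elected in the original one, while A was not, and the contrapositive gives the theorem. This conservation step, or an explicit construction of your monotone coupling (which amounts to the same thing), is the missing ingredient; without it your proposal establishes only the easy cases.
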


\begin{proof}
  We use the formulation in \refS{SPhro1} and \refR{Rtimeo} with increasing
  time; thus, at time $t$, each ballot has voting power $t$.
Let $f_i(t)$ and $\fx_i(t)$ be the total voting  power for candidate
$i$ at time $t$ for the original set of ballots and for the changed ballots,
respectively. 
Note that $0\le f_i(t)\le 1$ and that $i$ has been elected (with the
original ballots) at time $t$ or earlier if and only if $f_i(t)=1$, and
similarly for $\fx_i(t)$.
We claim the following:
{\em
\begin{equation}\label{cl}
  \text{If\/ }\fx_\rmA(t)<1, \text{ then } f_\rmA(t)\le \fx_\rmA(t)
\text{ and } f_i(t)\ge \fx_i(t)
\text{ for } i\neq \rmA.
\end{equation}}
To prove this, we let $\set{t_\ell}_{\ell=1}^L$ 
be the set of times when some candidate is elected for either the original
or the changed ballots, ordered in increasing order. We let $\tA$ be the
time that A is elected for the changed ballots, so the assumption
$\fx_\rmA(t)<1$, meaning that A is not yet elected, is equivalent to $t<\tA$.
We also let $t_0=0$. We prove the claim \eqref{cl}
by induction, assuming that it is
true for $0\le t\le t_\ell$, for some $\ell\ge0$ with $t_\ell<\tA$, and show
that \eqref{cl} holds 
also for $t\in I_\ell:=[t_\ell,t_{\ell+1}]$.

To show this induction step, note that by the inductive assumption, 
for
every candidate $i\neq \rmA$ 
that has not yet been elected at $t_\ell$ for the original ballots, we have
$1>f_i(t_\ell)\ge \fx_i(t_\ell)$, and thus $i$ is also not elected for the
changed ballots. 
Furthermore, $t_\ell<\tA$, so A has not been elected  at $t_\ell$ for the
changed ballots. Consequently, every candidate that at $t_\ell$
has been elected for the changed ballots has also been elected for the
original ballots.

Consider now a candidate $i\neq \rmA$. If $i$ has been elected for the original
ballots at $t_\ell$, then
$f_i(t_\ell)=1$ and thus $f_i(t)=1\ge\fx_i(t)$ for all $t\ge
t_\ell$.
On the other hand, suppose that $i$ has not been elected yet for the original
ballots at $t_\ell$, so $f_i(t_\ell)<1$. Consider a ballot that, after the
change, is valid
for $i$ immediately after $t_\ell$;
this means that $i$ is on the ballot and any candidate $j$ before $i$ on the
ballot has been elected, \ie, $\fx_j(t_\ell)=1$. 
In particular, $j\neq \rmA$.
Hence, before the change, this ballot had the same names before $i$, 
and we have seen that they have been
elected for the original ballots too at $t_\ell$; thus the ballot is valid
for $i$ also in the original set of ballots.
If we increase time (\ie, voting power) from $t_\ell$ to $t_{\ell+1}$, 
then, because there are no elections inside this interval,
the voting power of $i$ increases linearly, with a derivative equal to the
number of ballots valid for $i$. Hence we have shown that 
$f_i'(t)\ge \fx_i'(t)$ for $t\in(t_\ell,t_{\ell+1})$, and 
we see that $f_i(t)\ge \fx_i(t)$ for $t\in[t_\ell,t_{\ell+1}]$ because, by
the induction hypothesis, this holds for $t=t_\ell$.

We have verified the last part of the claim \eqref{cl}.
Now, the total voting power assigned or available from a ballot $b$ at time $t$
equals $\min(t,t_b)$, where $t_b$ is the time the last candidate on the ballot
is elected. We have seen that if the last candidate on the ballot is elected
at some time $\hat t_b$ for the set of changed ballots, and $\hat
t_b\le t_{\ell}<\tA$, 
then at time $\hat t_b$ all candidates on the ballot have been elected also for
the original set of ballots, so $t_b\le\hat t_b$. Consequently, for each
ballot, and each time $t<t_{\ell+1}$, the total voting power that is used from a
ballot is at least as large after the change as before. Summing over all
ballots (and noting that some ballots may have been added, but none
removed),
we find
that the total voting power used for all candidates is at least as large
after the change as before, \ie, $\sum_i \fx_i(t)\ge\sum f_i(t)$, 
$t<t_{\ell+1}$. 
For $t\in[t_\ell,t_{\ell+1})$, we have also shown that $\fx_i(t)\le f_i(t)$
for every $i\neq A$, and it follows that $\fx_\rmA(t)\ge f_\rmA(t)$.
By continuity, this holds for $t=t_{\ell+1}$ too, which completes the
inductive step verifying \eqref{cl}.

Finally, \eqref{cl} says that the conclusion of \eqref{cl}
holds when $\fx_\rmA(t)<1$, \ie,
when $t<\tA$. By continuity, the conclusion holds also for $t=\tA$.
Now suppose that $\rmA$ is not elected with the changed ballots; then 
a set $\hcE$ of $\MM$ 
other candidates have been elected at time $\tA$. 
This together with the claim \eqref{cl} implies that for each $i\in \hcE$,
$f_i(\tA)\ge\fx_i(\tA)=1$, and thus $i$ has at time $\tA$ been elected also
for the original ballots. On the other hand, for any $t<\tA$, by \eqref{cl},
$f_\rmA(t)\le\fx_\rmA(t)<1$, so $\rmA$ has not been elected earlier.
Consequently, $\rmA$ is not in the set $\cE$ of the $\MM$ first 
elected with the original
ballots,
which completes the proof.
(In the case of A tying the last place both for the original and changed
ballots, we have to 
assume that the tie is resolved in the same way.)
\end{proof}

However, also \phragmen's methods are liable to 
non-monotonicity of other types. One such case for \phragmen's ordered method
is shown by \refE{ELanke}.
We give  two examples for \phragmen's unordered method,  
taken from \citet[Section 7.5]{MoraO},
which show that \refT{Tmono-u} does not extend to sets of more than one
candidate, even if they always appear together (and thus are tied).

\begin{example}[Non-monotonicity for \phragmen's unordered method]
\label{Enonmono1}
An example from \citet[(43)]{MoraO}.

Unordered ballots. 3 seats. \phragmen's method.
  \begin{val}
  \item [10]A\sss1A\sss2    
  \item [3]B
  \item [12]C
  \item [21]A\sss1A\sss2B
  \item [6]BC
  \end{val}
A calculation shows that \phragmen's method elects A\sss1, C, A\sss2.
(For the second seat, C has \jfr{} 18 against
$30/(1+\frac{21}{31})=465/26\doteq17.88$ for B and $31/2=15.5$ for A\sss2.
Then A\sss2 is elected with \jfr{} $31/2=15.5$ against $2790/187\doteq14.92$
for C.)

Now add another vote for A\sss1A\sss2:
  \begin{val}
  \item [11]A\sss1A\sss2    
  \item [3]B
  \item [12]C
  \item [21]A\sss1A\sss2B
  \item [6]BC
  \end{val}
The first seat still goes to A\sss1, but its load
on each ballot A\sss1A\sss2B is changed from $1/31$ to $1/32$,
and as a consequence, for the second seat, B's \jfr{} is increased to 
$30/(1+\frac{21}{32})=960/53\doteq18.11$, and thus B gets the second seat.
This, in turn, increases the load on each ballot A\sss1A\sss2B to
$53/960$, and thus for the third seat, the \jfr{} of A\sss2 is only
$10240/801\doteq12.78$ against $960/71\doteq13.52$ for C, so C gets the
third seat. Thus the elected are A\sss1, B, C.

Consequently, the extra vote on A\sss1A\sss2 means that one of them loses a
seat.
\end{example}

\begin{example}[Non-monotonicity for \phragmen's unordered method]
\label{Enonmono2}
An example from \citet[(42)]{MoraO}.
  
Unordered ballots. 3 seats.  \phragmen's method.
  \begin{val}
  \item [4]A\sss1A\sss2    
  \item [7]B
  \item [1]A\sss1A\sss2B
  \item [16]A\sss1A\sss2C
  \item [4]BC
  \end{val}
A calculation shows that \phragmen's method elects A\sss1, B, A\sss2.

Now suppose that one of the voters for B adds A\sss1 and A\sss2 to the
ballots.
Thus the ballots are:
  \begin{val}
  \item [4]A\sss1A\sss2    
  \item [6]B
  \item [2]A\sss1A\sss2B
  \item [16]A\sss1A\sss2C
  \item [4]BC
  \end{val}
A\sss1 still gets the first seat, but now the second seat goes to C, and
then the third seat goes to B; the elected are thus A\sss1, C, B.

Consequently, also in this example,
the extra vote on A\sss1A\sss2 means that one of them loses a
seat.
\end{example}

\section{Consistency}\label{Sconsistency}

\emph{Consistency} \cite{Young}
of an election method is
the property that if there are two disjoint sets $\cV_1$ and $\cV_2$ of voters
voting such that the votes from only $\cV_1$ would elect a set $\cE$ of
candidates, and  
the votes from only $\cV_2$  would elect the same set $\cE$, then
the combined set of votes from $\cV_1\cup\cV_2$ elects the same set $\cE$.
To be precise (to allow for ties),
we require that
if $\cE$ is a possible outcome for the votes from only $\cV_1$,
and also a possible outcome for the votes from only $\cV_2$,
then $\cE$ is a possible outcome for 
the combined set of votes from $\cV_1\cup\cV_2$.
\xfootnote{In the case of ties, \cite{Young} also requires
that, conversely, if
there exists a set  $\cE$ that is a possible outcome for both $\cV_1$ and
$\cV_2$, then any other 
possible outcome $\cE'$ of $\cV_1\cup\cV_2$ (so $\cE'$ ties with $\cE$)
 is a possible outcome for both $\cV_1$ and $\cV_2$.
We will not consider this, more technical, condition.
}

\begin{remark}\label{Rconvex}
  Consistency implies, in particular, that multiplying the number of ballots
  of each type by some integer will not affect the outcome of the election;
  in other words, the election method is homogeneous, see \refR{Rhomo}.
We assume that all voters are equal, so that only the number of ballots of
each type matters.\xfootnote{A property called \emph{anonymity}.}
Then the input to the election method, for a given set $\cC$ of candidates,
can be seen as a vector of non-negative integers, indexed by subsets of
$\cC$ in the unordered case and by ordered subsets of $\cC$ in the ordered
case. Homogeneity means that we can extend the method (uniquely) to vectors 
of non-negative rational numbers, such that the set $C_\cE$ of such
vectors that (may) elect a  set $\cE$ is a cone (or empty), for any set
$\cE$ of $s$ candidates.
Furthermore, as is easily seen, consistency is equivalent to the cone
$C_\cE$ being
convex, for every possible outcome $\cE$. 
(Hence the property is also called \emph{convexity} \cite{Woodall:properties}.)
\end{remark}

Consistency is a very natural property. Unfortunately, it is not compatible with
some other natural properties, and it is not satisfied by most
election methods.
In fact, \citet{Young} showed that (assuming some technical
conditions) the only consistent election methods for ordered ballots are the
scoring rules (Borda methods) (\refApp{Aborda}).
Similarly,  \citet{Lackner} show that 
(again assuming some technical conditions)
every consistent method for unordered ballots
equals Thiele's \opt{} method for some satisfaction function.
We record the easy part of this as a theorem.
\begin{theorem}\label{TTconvex}
  Thiele's \opt{} method is consistent (for any satisfaction function).
\end{theorem}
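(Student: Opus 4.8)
The plan is to exploit the fact that the satisfaction $F(\cS)$ is a sum over voters, so that it is additive when two disjoint electorates are combined. First I would fix the candidate set and the target size $\MM$, and write $F\xx1(\cS)$ and $F\xx2(\cS)$ for the total satisfaction of a set $\cS$ computed from the ballots of $\cV_1$ and of $\cV_2$ alone, respectively, each by the defining formula $F(\cS)=\sum_\gs v_\gs f(|\gs\cap\cS|)$. Since every voter contributes its own term $f(|\gs\cap\cS|)$ to the sum, independently of the other voters, the satisfaction of $\cS$ for the combined electorate $\cV_1\cup\cV_2$ is exactly $F\xx1(\cS)+F\xx2(\cS)$.

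Next I would translate the hypothesis into inequalities. By the definition of Thiele's \opt{} method, saying that $\cE$ is a possible outcome for $\cV_1$ alone means precisely that $\cE$ maximizes $F\xx1$ over all candidate sets of size $\MM$, i.e.\ $F\xx1(\cE)\ge F\xx1(\cS)$ for every such $\cS$; likewise $F\xx2(\cE)\ge F\xx2(\cS)$ for every $\cS$ of that size. Adding these two inequalities gives $F\xx1(\cE)+F\xx2(\cE)\ge F\xx1(\cS)+F\xx2(\cS)$ for every $\cS$, that is, $\cE$ maximizes the combined satisfaction. Hence $\cE$ is a possible outcome for $\cV_1\cup\cV_2$, which is exactly the consistency condition.

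There is essentially no obstacle here: this is the standard argument showing that any rule selecting the maximizers of a voterwise-additive objective is consistent. The only point requiring a moment's care is the treatment of ties, but it causes no difficulty, because ``possible outcome'' is defined as ``attains the maximum,'' and a common maximizer of two functions is automatically a maximizer of their sum. (The converse tie condition of \cite{Young} mentioned in the footnote is explicitly not being claimed, so no further work is needed, and the argument is uniform in the choice of satisfaction function $f$.)
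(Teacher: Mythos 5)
Your proof is correct and matches the paper's intent exactly: the paper simply declares the result ``obvious from the definition'' of Thiele's \opt{} method, and the additivity-of-maximizers argument you spell out (a common maximizer of $F\xx1$ and $F\xx2$ maximizes their sum) is precisely the content of that claim, including the correct handling of ties in the ``possible outcome'' direction.
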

\begin{proof}
  Obvious from the definition of the method in \refS{SSthiuopt}.
\end{proof}

By the theorems just mentioned, \phragmen's unordered and ordered methods
and Thiele's unordered (addition) and ordered methods are not consistent.
We give some explicit counterexamples.

\begin{remark}
In the case $\MM=1$, \refT{T1} says that
\phragmen's method and Thiele's \opt{} and addition
methods for unordered ballots reduce to Approval Voting, 
while \phragmen's and Thiele's methods for
ordered ballots reduce to Single-Member-Plurality; 
Approval Voting and Single-Member-Plurality are both obviously consistent, so 
all of \phragmen's and Thiele's methods (in the latter case for any
satisfaction function), except possibly Thiele's elimination method, are
consistent for $\MM=1$.
\end{remark}

\begin{example}[\phragmen's and Thiele's unordered methods are not consistent]
\label{Econs-u}
\quad

Unordered ballots. 2 seats. \phragmen's or Thiele's method.

First set of voters:
  \begin{val}
  \item [12]AB
  \item [1]B
  \item [9]C
  \end{val}
For both \phragmen's and Thiele's (addition) methods,
first B is elected and then C. Elected: BC.

Second set of voters:
  \begin{val}
  \item [12]AC
  \item [9]B
  \item [1]C
  \end{val}
This is the same with B and C interchanged, so the elected are again BC.
(They are elected in different order, but that is irrelevant for the outcome
of the election.)

Combined election:
  \begin{val}
  \item [12]AB
  \item [12]AC
  \item [10]B
  \item [10]C
  \end{val}
This is the same as \refE{ETh12}.
\phragmen's and Thiele's method both elect first A, and then B or C (a tie,
by symmetry). Elected: AB or AC.

Thus neither of the methods is consistent.
\end{example}

\begin{example}[Thiele's ordered method is not consistent]\label{EconsTh-o}
\quad
  
Ordered ballots. 2 seats. Thiele's method.

First set of voters:
  \begin{val}
  \item [12]A
  \item [10]BC
  \item [9]C
  \end{val}
 Thiele's (ordered) method elects A and B.

Second set of voters:
  \begin{val}
  \item [11]B
  \item [10]AC
  \item [9]C
  \end{val}
 Thiele's  method elects B and A.

Combined election:
  \begin{val}
  \item [12]A
  \item [10]AC
  \item [11]B
  \item [10]BC
  \item [18]C
  \end{val}
Thiele's method elect A first (22 votes), 
but then C gets the second seat (23 votes, against 21 for B). 
Elected: AC.

Thus Thiele's ordered method is not consistent.
\end{example}

In these examples, the same candidates were elected by the two sets of
voters $\cV_1$ and $\cV_2$, but in different order.
It is also possible to find counterexamples where the candidates are elected
in the same order.

\begin{example}[\phragmen's ordered method is not consistent]
	We modify \refE{ELanke}.

Ordered ballots. 2 seats. \phragmen's method.

First set of voters:
\begin{val}
\item [15]AB
\item [12]BX
\item [14]CY
\end{val}
Phragmén's method gives, just as in \refE{ELanke}, the first seat to A and
the second to C. 
Elected (trivially): AC.

Second set of voters:
\begin{val}
\item [3]AC
\end{val}
Elected: AC.

Combined election:
\begin{val}
\item [15]AB
\item [12]BX
\item [14]CY
\item [3]AC
\end{val}
As seen in \refE{ELanke}, now the elected are A and B.

Hence, \phragmen's ordered method is not consistent.
\end{example}

\subsection{Ballots with all candidates}\label{SSfull}

Consider unordered ballots and
the case when the voters in a subset $\cV_1$ of all voters $\cV$ vote for all
candidates. Will their votes affect the outcome at all, or will the result
of the election be the same as if only $\cV_2:=\cV\setminus\cV_1$ had voted?
We call a ballot containing all candidates a \emph{full ballot}, and
we say that an election method \emph{ignores full ballots} if it always
gives the same result (including ties) if we add or remove full ballots.
\xfootnote{
Whether this is a natural, and perhaps important, property or not 
is a philosphical question
rather than a mathematical, and is left to the reader; however,\textsc{}
it seems natural to argue that full ballots with all candidates do not express
any real 
preference, and therefore ought to be ignored just as blank ballots with no
names. 
}

This is a special case of consistency; if only $\cV_1$ had voted, with 
full ballots, then the
result would have been a tie between any set of $s$ candidates.
Hence any consistent method will ignore full ballots.
In particular, Thiele's \opt{} method ignores full ballots by
\refT{TTconvex}. Moreover, the following holds.

\begin{theorem}
Thiele's unordered methods all ignore full ballots (for any satisfaction
function). 
\end{theorem}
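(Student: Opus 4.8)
The plan is to isolate a single structural fact and apply it to all three methods: a \emph{full ballot}, \ie{} one listing every candidate, always contains every candidate that is still ``in play'' at any stage of the algorithm, and therefore contributes the \emph{same} amount to each such candidate. I would treat the optimization, addition, and elimination methods in turn, each time reducing the claim to the observation that adding full ballots produces a uniform shift that changes neither the chosen candidate nor the set of tying candidates.

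For the optimization method I would note (this is essentially \refT{TTconvex}) that a full ballot $\gs=\cC$ satisfies $|\gs\cap\cS|=|\cS|=\MM$ for every admissible set $\cS$ of $\MM$ candidates, so its contribution to $F(\cS)$ is the constant $f(\MM)$, independent of $\cS$. Hence adding $m$ full ballots adds $mf(\MM)$ to $F(\cS)$ for all $\cS$ at once, which does not change which sets maximize $F$; the outcome, including ties, is unchanged.

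For the addition method I would argue by induction on the seats already filled. Suppose the election with the full ballots and the election without them have selected exactly the same candidates through the first $k$ seats, so that the already-elected set is identical in both runs. Because a full ballot contains all candidates, exactly those $k$ are elected on it, so it counts as $w_{k+1}$ votes for \emph{every} remaining candidate; the $m$ full ballots thus add the common constant $mw_{k+1}$ to the vote count of each remaining candidate. A uniform additive shift preserves the ordering of the counts and, in particular, the set of candidates attaining the maximum, so the same candidate (or the same tie) is available for seat $k+1$ and the induction continues. The elimination method is handled identically, with the induction run on elimination steps: if $j$ candidates have been eliminated, a full ballot has exactly $N-j$ remaining candidates on it (where $N$ is the number of candidates, unchanged by adding full ballots), so it counts as $w_{N-j}$ votes for each of them, again a common constant $mw_{N-j}$ added uniformly, which does not change which candidate attains the minimum nor the set of minimizers. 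So the same candidate is eliminated at each step and the surviving set of $\MM$ candidates is the same.

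The only point that needs care — more a bookkeeping matter than a genuine obstacle — is keeping the two elections in lockstep: the uniform-shift observation is per step, so one must verify inductively that, because the same candidate is elected (resp. eliminated) at each step, the already-elected set (addition) or remaining set (elimination) stays identical across the two runs, ensuring the per-step weight $w_{k+1}$ (resp. $w_{N-j}$) really is the same in both and the argument iterates to the end. Ties require one sentence: since a uniform additive shift preserves the entire set of maximizers (resp. minimizers), the collection of possible outcomes is preserved exactly, which is precisely what ``ignores full ballots'' demands.
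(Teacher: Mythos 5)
Your proof is correct and rests on exactly the same observation as the paper's (one-line) proof: a full ballot contributes the same amount to every candidate still in play at each step, so adding full ballots only shifts all relevant scores by a common constant and cannot change the maximizers/minimizers. Your version merely makes explicit the lockstep induction and the tie-preservation that the paper dismisses as obvious.
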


\begin{proof}
  Obvious from the definitions in \refS{Sthiu}, since a ballot with all
  candidates gives the same contribution to each candidate at every step of
  the calculations.
\end{proof}

On the other hand, \phragmen's method does \emph{not} ignore full ballots.
This follows from the study of party versions by \citet[Section 7]{MoraO}
(discussed in \refSS{SSMora-party} below);
we give two explicit examples below.
(See \cite{MoraO} for further strange behaviour of \phragmen's method in
situations where some voters vote for all candidates.)

\begin{example}[\phragmen's method does not ignore full ballots]
\label{EfullAB}
This example is based on general results in \citet[Section 7.7]{MoraO}.

Unordered ballots. 4 seats. \phragmen's method.
\begin{val}
\item [5] A\sss1A\sss2A\sss3A\sss4
\item [3]B\sss1B\sss2B\sss3B\sss4
\end{val}
This is a case with party lists, so by \refT{TDHondt}, the candidates are
elected as by D'Hondt's method, which gives A\sss1B\sss1A\sss2A\sss3. 
(For example, the last seat
goes to A\sss3 because $5/3>3/2$.

Now add a large number of votes for all candidates:
\begin{val}
\item [5] A\sss1A\sss2A\sss3A\sss4
\item [3]B\sss1B\sss2B\sss3B\sss4
\item [10] A\sss1A\sss2A\sss3A\sss4B\sss1B\sss2B\sss3B\sss4
\end{val}
A calculation shows that the elected will be
A\sss1B\sss1A\sss2B\sss2.
(For example, for the fourth seat, the three 
types of ballots have already loads 
$\frac{34}{195},\frac{25}{195},\frac{34}{195}$
and thus the three
groups of ballots have
place numbers $\frac{34}{39},\frac{15}{39},\frac{68}{39}$, 
and the \jfr{s}
of the A's and B's (which of course tie among themselves) are
$\frac{195}{47}\doteq4.1489$ and $\frac{507}{122}\doteq4.1557$,
so the last seat goes to B.)
\end{example}

\begin{example}[\phragmen's method does not ignore full ballots]
\label{EfullABC}
Another example, showing that already the second seat can differ.

Unordered ballots. 2 seats. \phragmen's method.
  \begin{val}
  \item [10]A
  \item [3]AB
  \item [2]C    
  \end{val}
The first seat goes to A (13 votes).
The second seat goes to B (\jfr{} $3/(1+\frac{3}{13})=39/16=2.4375$ against
2 for C).
Elected: AB.

Now add a large number of votes for all candidates:
  \begin{val}
  \item [10]A
  \item [3]AB
  \item [2]C    
  \item [10]ABC
  \end{val}
A still gets the first seat.
A calculation shows that for the second seat, 
B has \jfr{} $13/(1+\frac{13}{23})=299/36\doteq8.31$ while
C has \jfr{} $12/(1+\frac{10}{23})=92/11\doteq8.36$; 
thus C gets the second seat.
Elected: AC.
\end{example}

Since the result of \phragmen's method thus can be affected by adding
full ballots, it is (mathematically) interesting to ask for the result if we
add a very large number $N$ of full ballots.

\begin{theorem}
  Consider an election with \phragmen's unordered method.
If we add $N$ full ballots, then for all sufficiently large $N$, the outcome
of the new election will be the same as for the original ballots with the
following election method, at least providing that there are no ties for the
latter:
\end{theorem}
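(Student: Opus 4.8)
The plan is to work with the place-number formulation of \phragmen's unordered method from \refS{SPhru2}, applied to the enlarged set of ballots. Write $\gs_*$ for the \emph{full} ballot type (containing all candidates), and denote by $Q\xx n$ the place number carried by the group of $N$ full ballots after $n$ seats have been allocated. Since $\gs_*$ contains every candidate, it participates in \emph{every} round, so by \refR{Rc1} we always have $Q\xx n=N/W\xx n$, where $W\xx n$ is the winning \jfr{} of the $n$-th round. For an original (non-full) type $\gs$, \refR{Rc1} gives $q_\gs=v_\gs/W\xx{\ell_\gs}$, where $\ell_\gs$ is the last round in which some candidate on $\gs$ was elected ($\ell_\gs:=0$ and $q_\gs=0$ if none). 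Thus, writing $a_i:=\sum_{\gs\ni i}v_\gs$ (summed over original types only), the \jfr{} \eqref{wi} of a not-yet-elected candidate $i$ after $n$ seats becomes
\begin{equation*}
  W_i=\frac{N+a_i}{1+N/W\xx n+\sum_{\gs\ni i}v_\gs/W\xx{\ell_\gs}} .
\end{equation*}

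The heart of the proof is an asymptotic analysis as \Ntoo. I would prove, by induction on $n$, the estimate $W\xx n=N/n+O(1)$ (base case $W\xx1=\max_i(N+a_i)=N+O(1)$). This immediately yields $Q\xx n=N/W\xx n\to n$ and, for each original type, $q_\gs=v_\gs/W\xx{\ell_\gs}=v_\gs\ell_\gs/N+O(N^{-2})\to0$; so in the limit the full ballots carry essentially the entire load $n$. Substituting these into the displayed formula and expanding to first order in $1/N$ gives
\begin{equation*}
  W_i=c\xx n+\frac{(n+1)a_i-\sum_{\gs\ni i}v_\gs\ell_\gs}{(n+1)^2}+O(1/N),
\end{equation*}
where $c\xx n=\tfrac{N}{n+1}+O(1)$ does \emph{not} depend on $i$ (it absorbs the common contributions of $N$, of $Q\xx n$, and of the $O(1/N)$ correction to $Q\xx n$, none of which I need to evaluate). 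Hence, up to an $O(1/N)$ error, the $(n+1)$-th seat goes to the candidate maximizing $\Phi_i\xx{n+1}:=\sum_{\gs\ni i}v_\gs\,(n+1-\ell_\gs)$, which is precisely the score of the election method named in the statement. One can sanity-check this against \refE{EfullABC} and \refE{EfullAB}: for the latter, the rule $\Phi$ elects $A_1B_1A_2B_2$, matching the large-$N$ outcome (and differing from Thiele's addition method, which would give three $A$'s).

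The induction then closes as follows. If the limiting method has a \emph{strict} winner at step $n+1$ (the no-ties hypothesis), the $O(1)$ gap between the two largest values of $\Phi_i\xx{n+1}$ dominates the $O(1/N)$ error for all sufficiently large $N$, so the true winner of seat $n+1$ coincides with it; this also fixes the values $\ell_\gs$ entering the next round and gives $W\xx{n+1}=\tfrac{N}{n+1}+O(1)$, continuing the induction. Applying the step for $n=0,1,\dots,\MM-1$ shows that for all large $N$ the entire sequence of elected candidates agrees with the limiting method. The main obstacle is exactly that, because $Q\xx n\to n$ carries essentially all the load, every \jfr{} shares the same leading value $N/(n+1)$ and the winner is decided only at the next order; the delicate part is therefore the \emph{simultaneous} induction controlling both $W\xx n=N/n+O(1)$ (needed to get $q_\gs=v_\gs\ell_\gs/N+O(N^{-2})$) and the identity of each round's winner (which determines the $\ell_\gs$ feeding into the expansion for the following round). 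The no-ties assumption is what converts an $O(1)$ separation of the limit scores $\Phi_i$ into a genuine separation of the $W_i$; with an exact tie one would have to resolve the uncontrolled $O(1/N)$ terms, which is why that case is excluded.
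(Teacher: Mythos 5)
Your proposal is correct and is essentially the paper's own argument in reciprocal form: the paper works with the voting-power formulation and proves $t\xx n-t\xx{n-1}=N\qw+O(N\qww)$, which is exactly your estimate $W\xx n=N/n+O(1)$ restated via $W\xx n=1/t\xx n$, and both proofs then extract the same candidate-dependent sub-leading term $\sum_{\gs\ni i}v_\gs(k-\ell_\gs)$ and close with the same induction on rounds, using the no-ties gap to dominate the $O(1/N)$ error. The only difference is cosmetic: in the paper's additive voting-power balance the full ballots contribute a common term $N(t\xx k-t\xx{k-1})$ that cancels exactly in the comparison $T_i\le T_{i_k}=1$, so the paper avoids the first-order expansion of a quotient that your \jfr{} formulation requires.
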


\begin{metod}{A limit of \phragmen's unordered method}
Seats are given to candidates sequentially, until the desired number have
been elected.
In round $k$ (\ie, for electing to the $k$-th seat), a ballot $\gb$
is counted as
$1-\ell_\gb/k$ votes for each candidate on the ballot (except the ones
already elected), where $\ell_\gb$ is the last round where some candidate
from the ballot was elected, with $\ell_\gb=0$ if no candidate on the ballot
has been elected yet.
The candidate with the largest number of votes is elected.  
\end{metod}

\begin{remark}
  Equivalently, we may count ballot $\gb$ as $k-\ell_\gb$ votes.
We can thus also describe the method as follows:
\begin{metod0}
Elect the candidate with the largest number of votes. Discard all ballots
with the elected candidate; then add a new copy of each original ballot.
Repeat $s$ times.
\end{metod0}
\end{remark}

\begin{proof}
We consider an election with $s$ seats and $\VV $ 'real' votes, to which we add
$N$ full ballots. 
We use the formulation in \refSS{SPhru1}; in particular, $t\xx n$ is the
woting power required to elect $n$ candidates.
Since an additional voting power $1/N$ gives each candidate voting power 1
just from the full ballots, which is enough to elect any of them,
$t\xx{n+1}\le t\xx n+1/N$. In particular, for every $n\le s$, $t\xx n\le
n/N\le s/N$.
This implies that at time $t\xx{n}$, the elected candidate gets voting
power at most $\VV s/N$ from the real votes, and thus at least $1-\VV s/N$
from the full votes. Hence, $1/N-\VV s/N^2\le t\xx{n}-t\xx {n-1}\le 1/N$,
and thus
\begin{equation}\label{full}
t\xx{n}-t\xx {n-1}=N\qw+O\bigpar{N\qww}.  
\end{equation}
When the $k$-th candidate is elected, a ballot  where the last round that
someone was elected was $\ell$ thus has a free voting power 
$t\xx k-t\xx\ell=(k-\ell)N\qw+O\bigpar{N\qww}$.
The total voting power available for candidate $i$ is thus
\begin{equation}\label{fullm}
T_i=\sum_{\gb\ni i} (k-\ell_\gb)N\qw+O\bigpar{N\qww}+N(t\xx k-t\xx{k-1}),
\end{equation}
summing over all real ballots containing $i$. Since the total voting power
$T_i$ is $1$ for the elected candidate, and at most 1 for every other, 
we see that if $i_k$ is the elected in round $k$, then
for every $i$, $T_i\le T_{i_k}$, and thus by \eqref{fullm}
\begin{equation}
\sum_{\gb\ni i} (k-\ell_\gb)N\qw
\le 
\sum_{\gb\ni i_k} (k-\ell_\gb)N\qw
+O\bigpar{N\qww}
\end{equation}
for every $i$, and thus
\begin{equation}
\sum_{\gb\ni i} (k-\ell_\gb)
\le 
\sum_{\gb\ni i_k} (k-\ell_\gb)
+O\bigpar{N\qw}.
\end{equation}
It follows by induction on $k$, that for all large $N$, 
$i_k$ is the winner of the $k$-th seat by the
method defined above, provided there are no ties in that method.
\end{proof}

This limit method seems more interesting theoretically than for practical
use.
In particular, the following example shows that
in the party list case, the limit method does \emph{not}
reduce to D'Hondt's method.

\begin{example}[The limit method in a party list case]\label{Elimit}
  Consider the simplest case with two parties.

Unordered ballots. The limit of \phragmen's method defined above.
\begin{val}
\item [$a$] A\sss1A\sss2A\sss3\dots
\item [$b$]B\sss1B\sss2B\sss3\dots
\end{val}

Suppose that $a>b$, and let $k:=\floor{a/b}$.
Furthermore, to avoid ties, suppose that $a/b$ is not an integer, so
$kb<a<(k+1)b$.
It is easy to see that the limit method above will elect in the periodic
order
A\sss1,\dots,A\sss{k},B\sss1,
A\sss{k+1},\dots,A\sss{2k},B\sss2,\dots
with $k$ A's followed by a single B, repeatedly.

In particular, if $b<a<2b$, 
and the number of seats is
even, then the two parties get an equal number of seats, in spite of the
fact that one
party has more votes than the other.
For example, with $a=5$, $b=3$ and $s=4$ as in \refE{EfullAB}, 
the elected are A\sss1B\sss1A\sss2B\sss2, while D'Hondt's method would
give
A\sss1B\sss1A\sss2A\sss3.
\end{example}

\section{Proportionality}\label{Sprop}

There is no precise definition of proportionality for an election method,
and whether a method is proportional or not is partly a matter of degree
(and perhaps taste).
The general idea of proportionality is that a sufficiently large group of
voters gets a ``fair'' share of the elected. When there are no formal
parties, as in the election methods considered here, we should here consider
arbitrary groups of voters, possibly with some restriction on how they vote.
This can be made precise in many different ways. We study in this section
some such properties relevant for \phragmen's and Thiele's methods.


\phragmen's methods satisfy the following proportionality criteria.

\begin{theorem} \label{TPhPC}
\phragmen's methods satisfy the following  properties,
where $V$ is the number of votes, $\MM$ the number of seats and $\ell$
is an arbitrary integer with $1\le \ell\le \MM$.
  \begin{romenumerate}
  \item \label{TPhPCu}
For \phragmen's unordered method:
If a set of more than $\frac{\ell}{\MM+1}V$ voters vote for the same list
containing at least $\ell$ candidates, then at least $\ell$ candidates from this
list are elected.
  \item \label{TPhPCo}
For \phragmen's ordered method:
If a set of more than $\frac{\ell}{\MM+1}V$ voters 
vote with the same $\ell$ candidates first on their
ballots, but not necessarily in the same order,
then these $\ell$ candidates  are elected.
In particular, if at least $\frac{\ell}{\MM+1}V$ voters 
vote for the same list
(containing at least $\ell$ candidates),
then the $\ell$ first candidates on this list will be elected.
 \end{romenumerate}
\end{theorem}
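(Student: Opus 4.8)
The plan is to work throughout with the voting-power (equivalently, load) formulation of \refSS{SPhru1} and \refSS{SPhro1}, in the continuous-time picture of Remarks~\ref{Rtime} and~\ref{Rtimeo}: the voting power $t$ increases from $0$, a candidate is elected the instant its available power reaches $1$, and the free power on the participating ballots is then frozen as their load. Write $t\xx\MM$ for the voting power at which the $\MM$-th seat is filled. Three facts will drive everything: $t\xx\MM$ is the largest load of any ballot; every ballot has load at most $t\xx\MM$; and the total assigned load is exactly $\sum_\gs v_\gs r_\gs=\MM$ (recall $q_\gs=v_\gs r_\gs$ sums to the number of seats filled). Let $u$ be the size of the distinguished group, so $u>\frac{\ell}{\MM+1}V$, and let $L$ be the common list, with $|L|\ge\ell$. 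I would argue by contradiction, assuming that only $j\le\ell-1$ candidates of $L$ get elected; since $j<\ell\le|L|$, at least one candidate of $L$ is left unelected.

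The first step is a bound on the total load carried by the group's $u$ ballots. In both methods these ballots contribute voting power only to candidates of $L$: in the ordered case because, as long as some candidate of $L$ remains unelected, the current top name of such a ballot is the first unelected of its first $\ell$ names, hence lies in $L$. Since each of the $j$ elected candidates of $L$ absorbs total power exactly $1$, the group's ballots contribute at most $j$ in all, so their combined load is $\le j$. Splitting $\sum_\gs v_\gs r_\gs=\MM$ into the group and its complement and bounding every other ballot's load by $t\xx\MM$ gives $\MM\le j+(V-u)\,t\xx\MM$, i.e. the lower bound $t\xx\MM\ge(\MM-j)/(V-u)$. (The degenerate case $u=V$ is handled separately and trivially, as then only candidates of $L$ receive votes.)

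The second step is an upper bound on $t\xx\MM$ coming from the unelected candidates of $L$, and this is where the two parts diverge; the hard part will be the ordered case. For the unordered method every group ballot contains every unelected candidate of $L$, so picking one such candidate $i^*$, its available power at time $t\xx\MM$ is at least $u\,(t\xx\MM-\bar r)$, where the common group load satisfies $\bar r\le j/u$; as $i^*$ is not elected this is $\le1$, giving $u\,t\xx\MM\le 1+j\le\ell$. In the ordered case a single unelected candidate need \emph{not} be the top name of all $u$ ballots, because the group may order $L$ differently — this is the main obstacle. The remedy is to sum over all $\ell-j$ unelected candidates of $L$: every group ballot has its current top among precisely these candidates, so their available powers sum to at least $u\,t\xx\MM$ minus the group's total load $(\le j)$, while each individually is $\le1$; hence $u\,t\xx\MM\le(\ell-j)+j=\ell$, the same bound $t\xx\MM\le\ell/u$. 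Finally I would combine the two estimates: $(\MM-j)/(V-u)\le\ell/u$ rearranges to $u(\MM-j+\ell)\le\ell V$, and since $\ell-j\ge1$ forces $\MM-j+\ell\ge\MM+1$, this yields $u\le\frac{\ell}{\MM+1}V$, contradicting the strict hypothesis. The strictness also rules out any tie ambiguity, so at least $\ell$ candidates of $L$ are elected in (i), and in (ii) all $\ell$ of the common top names are elected; the ``in particular'' clause is the special case in which those $\ell$ names appear in the same order on every ballot.
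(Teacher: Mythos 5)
Your proof is correct and takes essentially the same route as the paper's: you work in the same voting-power/load formulation, assume for contradiction that only $j<\ell$ candidates of the common list are elected, bound the group's total voting power $u\,t\xx{\MM}$ by $\ell$ (load $\le j$ plus free power $\le 1$ in the unordered case, resp.\ $\le \ell-j$ summed over the unelected tops in the ordered case), and play this off against the power the remaining $V-u$ ballots must supply. The paper phrases the second estimate as ``the $\ge \MM+1-\ell$ candidates elected outside $\cC$ draw their full unit loads from the other ballots'' rather than via your identity ``total load $=\MM$'', but the two inequalities and the concluding algebra are identical.
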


\begin{proof}
Let $\cA$ be such a set of 
$|\cA|>\frac{\ell}{\MM+1}V$ voters, and
let $\cC$ in case \ref{TPhPCu} be the set of  candidates on the
list voted for by $\cA$ (thus $|\cC|\ge\ell$),
and in case \ref{TPhPCo} the set of the $\ell$ candidates that are first on the 
ballots from $\cA$ (thus $|\cC|=\ell$).
Suppose, to obtain a contradiction, that only  $\ellx<\ell$ of the
candidates in $\cC$
are elected.

We use the formulation with voting power
in Sections \ref{SPhru1} and \ref{SPhro1}, and let $t=t\xx \MM$ be the final
voting power of each ballot.

In the unordered case \ref{TPhPCu}, the $\ellx$ elected candidates from $\cC$
are together assigned voting power $\ellx$, 
of which some part may come from
voters not in $\cA$. The total free (unassigned) voting power of the ballots in
$\cA$
is at most 1, since otherwise another candidate from $\cC$ would have been
elected. (This free voting power may equal 1 if there was a
tie for the last seat.)
Hence, the total voting power of the ballots in $\cA$ is at most $\ellx+1\le
\ell$,

In the ordered case \ref{TPhPCo}, 
since at least one candidate in $\cC$ is not elected, each ballot in $\cC$
has when the election finishes a current top
candidate that belongs to $\cC$. Hence,
if we regard the free voting power of a ballot as assigned to its current
top candidate, then
the voting power of each ballot in $\cA$
is fully assigned to one or several candidates in $\cC$ (elected or not).
Since each candidate is assigned a total voting power at most 1,
and $|\cC|=\ell$,
the total voting power of the ballots in $\cA$ is at most $\ell$ in this case
too.

Since there are $|\cA|$ ballots in $\cA$ and each has voting power $t$, this
shows that in both cases
\begin{equation}
  \ell \ge |\cA|t>\frac{\ell}{\MM+1}Vt
\end{equation}
and thus 
\begin{equation}\label{tv}
  Vt < {\MM+1}.
\end{equation}

On the other hand, $\MM-\ellx$ candidates not in $\cC$ have been
elected,
and thus a total voting power $\MM-\ellx\ge \MM+1-\ell$ has been assigned to
them. 
Since the ballots in $\cA$ do not contribute to this, all this voting power
comes from the $V-|\cA|$ other ballots, and thus
\begin{equation}
  \MM+1-\ell \le (V-|\cA|)t <\Bigpar{V-\frac{\ell}{\MM+1}V}t = \frac{\MM+1-\ell}{\MM+1}Vt.
\end{equation}
However, this contradicts \eqref{tv}.
\end{proof}

\begin{corollary}\label{CPhPC}
\phragmen's unordered and ordered methods satisfy:
If  more than half of the voters vote for the same list
containing at least $\MM/2$ candidates, 
then at least $\MM/2$ of these  are elected.
\end{corollary}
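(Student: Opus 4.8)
The plan is to obtain \refC{CPhPC} as a direct specialization of \refT{TPhPC}, taking the integer $\ell$ in that theorem to be $\ell:=\ceil{\MM/2}$. Two conditions must be arranged simultaneously. First, the threshold $\frac{\ell}{\MM+1}V$ should be at most $\frac12 V$, so that ``more than half of the voters'' already forces the quantitative hypothesis of \refT{TPhPC}. Second, $\ell$ should be at least $\MM/2$, so that the conclusion ``at least $\ell$ are elected'' yields the desired ``at least $\MM/2$ are elected''; with $\ell=\ceil{\MM/2}$ this is automatic, and one checks trivially that $1\le\ell\le\MM$.

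First I would verify the threshold inequality $\frac{\ell}{\MM+1}\le\frac12$ by distinguishing the parity of $\MM$. If $\MM=2m$ is even then $\ell=m$ and $\frac{\ell}{\MM+1}=\frac{m}{2m+1}<\frac12$, a strict inequality. If $\MM=2m+1$ is odd then $\ell=m+1$ and $\frac{\ell}{\MM+1}=\frac{m+1}{2m+2}=\frac12$. In either case, letting $\cA$ denote the set of voters in the hypothesis, the assumption $|\cA|>\frac12 V$ gives $|\cA|>\frac{\ell}{\MM+1}V$. Moreover, since the number of candidates on the list is an integer, ``at least $\MM/2$ candidates'' means at least $\ceil{\MM/2}=\ell$ candidates, so the list indeed contains at least $\ell$ candidates. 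Both hypotheses of \refT{TPhPC} are therefore met.

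Applying \refT{TPhPC}\ref{TPhPCu} in the unordered case, and \ref{TPhPCo} in the ordered case (where the common list places the same $\ell$ candidates first on every ballot of $\cA$), then shows that at least $\ell$ of the listed candidates are elected, and $\ell=\ceil{\MM/2}\ge\MM/2$ completes the argument. There is no genuine obstacle here; the only point requiring care is the odd case, where $\frac{\ell}{\MM+1}=\frac12$ holds with \emph{equality}, so it is essential that the hypothesis demands \emph{strictly} more than half of the voters rather than merely half. I would flag this explicitly, since it is exactly what makes the bound in \refT{TPhPC} tight for this corollary.
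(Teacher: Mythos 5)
Your proof is correct and is essentially the paper's own argument: the paper likewise deduces the corollary from \refT{TPhPC} with $\ell=\ceil{\MM/2}\le(\MM+1)/2$, which is exactly your threshold inequality (your parity split and the remark about the odd case needing the strict ``more than half'' are just an expanded verification of that one inequality).
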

\begin{proof}
  By \refT{TPhPC} with $\ell=\ceil{\MM/2}\le{(\MM+1)/2}$.
\end{proof}
In particular, if $\MM$ is odd, a majority of the voters will, if they vote
coherently, get a majority of the seats.

Note that \refT{TPhPC} 
does \emph{not} hold for Thiele's
unordered and ordered methods, as is seen by \refE{Etactic}
(the second election, where KLM has $13>V/(\MM+1)=50/4$ votes but does not
get any seat) 
and
\refE{Etactic-o}
(the second election, where CD has $39>V/(\MM+1)=100/3$ votes but does not
get any seat).
Also  \refC{CPhPC}  does not hold for Thiele's unordered and ordered
methods,
as is seen by the Example \ref{Etactic-o2} and the following example:

\begin{example}\label{Ega3-5}
Consider the following election:

Unordered ballots. 5 seats. Thiele's method.
\begin{val}
\item [9]A\sss1A\sss2A\sss3
\item [2]X\sss1X\sss2
\item [2]X\sss1X\sss3
\item [2]X\sss2
\item [2]X\sss3
\end{val}
The first two seats go to A\sss1 and A\sss2 (say).
Then X\sss1, X\sss2 and X\sss3 tie for the third seat (4 votes each), 
and  if X\sss1 is elected,  then A\sss3,
X\sss2 and X\sss3 tie for the remaining 2 seats (3 votes each).
Hence, depending on tie-breaks,
X\sss1, X\sss2 and X\sss3 may be elected to the last 3 seats, and thus
the A party, which has a majority of the votes (9 of 17)
 may end up with only 2 seats.
(In this and similar examples, it is easy to modify the example to avoid ties
by multiplying all votes by some large number and then adding a few votes on
suitable ballots, giving an arbitrarily small change of the proportions.)
\end{example}

\begin{remark}
  \label{RPhPCbest}
  The proportion of (more than) $\frac{\ell}{\MM+1}$ of the voters is exactly
  the proportion required to get $\ell$ seats of $\MM$ in an election with
  two parties by D'Hondt's method, as is easily seen.
Hence, by \refT{TDHondt}, this proportion  in
\refT{TPhPC} is the best possible.
\end{remark}

\begin{remark}\label{RDroop}
The proportion $\frac{\ell}{\MM+1}$
is less that $\frac{\ell}{s}$, which often is regarded
as a proportion that ought to guarantee at least $\ell$ seats. 
The proportion $\frac{\ell}{\MM}$
might be more intuitive, but as pointed out already by
\citet{Droop}, in an election of $\MM$ representatives
using simple plurality
\xfootnote{E.g.~by SNTV (\refApp{ASNTV});
Droop really discussed Cumulative Voting (\refApp{ACV}), but SNTV can be seen
as a special case, and the difference between the methods does not matter
here.
},
if some candidate A has more than $\frac{1}{\MM+1}$ of the
votes, 
then there cannot be $\MM$ other candidates that have at least as many  votes, 
and thus A is elected. The proportion
$\frac{1}{\MM+1}$ of the votes (called the \emph{Droop quota}, 
see \refApp{Alist}) is
thus a  
reasonable
requirement for having the ``right'' to be elected. And a party with
more than $\frac{\ell}{\MM+1}V$ votes could in principle split them on
$\ell$ candidates that would be elected.
Note also that in the case $\ell=\MM=1$, $\frac{1}{\MM+1}=\frac12$, so the
criterion, and \refT{TPhPC}, reduces to saying that a candidate supported by
a majority is elected.
\end{remark}

\begin{remark}\label{RDPC}
STV (\refApp{ASTV}) satisfies a property similar to \refT{TPhPC},
called the \emph{Droop Proportionality Criterion} by \citet{Woodall:properties}.
The version in \cite{Woodall:properties} assumes that the Droop
quota $Q_D:=V/(\MM+1)$ is used without rounding, 
\xfootnote{
For STV with a quota $Q> Q_D$
(recall that $Q\ge Q_D$ almost always holds in practice),
the property still holds 
  in the case $\ell=1$, but not in general; however, it holds for sets of
  at least $\ell Q$ voters. See \eg{} \cite[Section 12.5.1]{SJV6}.
}
and then says:

\emph{If $m\ge\ell$ and
a set of more than
$\ell V/(\MM+1)$ voters put the same $m$ names first on their ballots, but
not necessarily in the same order, then at least $\ell$ of these will be
elected.} 

Note that \refT{TPhPC}\ref{TPhPCo} is this property in the special
case $m=\ell$. (So the Droop Proportionality Criterion is stronger.) 
The fact that the property holds for STV also for arbitrary
$m>\ell$ depends on the eliminations of weak candidates, which will
concentrate the votes of this set of voters to smaller sets of candidates
until at
least $\ell$ of them are elected. 
For \phragmen's ordered method there is no similar
mechanism,
and splitting the votes on more than $\ell$ candidates can lead to less than
$\ell$ of them being elected; one example is given in \refE{Esplit} below.
This is in contrast to the unordered method, see \refT{TPhPC}\ref{TPhPCu},
where splitting the vote on several
candidates does not decrease the support of any of them (except when one of them
is elected).
\end{remark}

\begin{example}[\phragmen's ordered method]
  \label{Esplit}
\quad

Ordered ballots. 3 seats. \phragmen's method.
\begin{val}
\item [9]ABC
\item [9]ACB
\item [9]BAC
\item [9]BCA
\item [9]CAB
\item [9]CBA
\item [46]XYZ
\end{val}
With \phragmen's ordered method, the first two seats go to X and Y (\jfr{s}
46 and 23), while the third to any of A, B or C (\jfr{} 18).
Elected: AXY.

The ABC party has together 54 of the 100 votes, and if they all voted, say,
ABC, then they would be guaranteed two seats by \refT{TPhPC} (with $\ell=2$
and $\MM=3$) or by \refC{CPhPC}. (In fact, it suffices that they all vote
with ballots beginning with AB or BA.) However, by splitting their votes,
they lose one seat, and thus the majority.
%
\end{example}


For election methods with unordered ballots, 
\citet{EJR} defined two properties \emph{\qJR{} (justified representation)}
and (stronger) \emph{\qEJR{} (extended justified representation)};
\citet{SanchezEtAl} then defined a related property
\emph{\qPJR{} (proportional justified representation)}
such that \qEJR$\implies$\qPJR$\implies$\qJR.
These are defined as follows, where $\VV $ is the total number of votes, and
$\cA$ is a set of voters.

\emph{\qPJR: Let $1\le\ell\le\MM$ and suppose that
$|\cA|\ge \frac{\ell}{\MM} \VV $ and that 
$\Bigabs{\bigcap_{\gs\in\cA}\gs}\ge\ell$,
\ie, the voters in $\cA$ all vote for a common set $\cC$ of at least $\ell$
candidates, but may also vote for other candidates.
Then
$\ell$ candidates that some voter in $\cA$ has voted for are elected,
\ie{} $\Bigabs{\cE\cap\bigcup_{\gs\in\cA}\gs}\ge\ell$.
}

\emph{\qEJR: Let $1\le\ell\le\MM$ and suppose that
$|\cA|\ge \frac{\ell}{\MM} \VV $ and that 
$\Bigabs{\bigcap_{\gs\in\cA}\gs}\ge\ell$,
\ie, the voters in $\cA$ all vote for a common set $\cC$ of at least $\ell$
candidates, but may also vote for other candidates.
Then some voter in $\cA$ has voted for 
$\ell$ candidates that are elected,
\ie{} $\bigabs{\cE\cap\gs}\ge\ell$ for some $\gs\in\cA$.
}

\emph{\qJR: The condition above for PJR (or EJR) holds for the special case $\ell=1$.
}
(Note that the difference between $\qPJR$ and $\qEJR$
disappears
for $\ell=1$.)

We state some results from the papers above (and \cite{BrillEtAl}) without
proofs.

\begin{theorem}[\cite{EJR}]\label{Tth-EJR}
 Thiele's \opt{} method satisfies \qEJR.    

However, with any  other satisfaction function instead of the proportional 
\eqref{f}, \qEJR{} fails. 

Thiele's \opt{} method with 
a general satisfaction function \eqref{fw} with $w_1=1$ satisfies \qJR{} if
and only if $w_j\le 1/j$ for every $j>1$.
\end{theorem}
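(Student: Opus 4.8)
The plan is to prove the three assertions separately, but all through one ``local-exchange'' analysis of a maximiser $\cE$ of $F(\cS)=\sum_\gs v_\gs f(|\gs\cap\cS|)$. Throughout I write $r_\gs:=|\gs\cap\cE|$, let $\Delta(i\mid\cS):=F(\cS\cup\set i)-F(\cS)$ denote the marginal gain of a candidate $i$, and for $i\in\cE$ set $d_i:=F(\cE)-F(\cE\setminus\set i)=\sum_{\gs\ni i}v_\gs w_{r_\gs}$. The one bookkeeping identity I will reuse is $\sum_{i\in\cE}d_i=\sum_\gs v_\gs\,r_\gs w_{r_\gs}$, obtained by summing $d_i$ over $i\in\cE$ and regrouping by ballot type (equivalently, by voter).

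\emph{\qEJR{} for the harmonic $f$.} I argue by contradiction: assume $\cA,\cC$ with $|\cA|\ge\frac{\ell}{\MM}V$, $\cC\subseteq\gs$ for $\gs\in\cA$, $|\cC|\ge\ell$, yet $r_\gs\le\ell-1$ for every $\gs\in\cA$. Then $|\cE\cap\cC|\le\ell-1<|\cC|$, so there is a common candidate $c^*\in\cC\setminus\cE$; since each $\gs\in\cA$ gains $w_{r_\gs+1}=1/(r_\gs+1)\ge1/\ell$ from $c^*$, we get $\Delta(c^*\mid\cE)\ge|\cA|/\ell\ge V/\MM$. Optimality gives $\Delta(c^*\mid\cE\setminus\set i)\le d_i$ for each $i\in\cE$, and submodularity (the weights $1/k$ decrease, so $F$ is submodular) gives $\Delta(c^*\mid\cE\setminus\set i)\ge\Delta(c^*\mid\cE)\ge V/\MM$; hence $d_i\ge V/\MM$ for all $i$, so $\sum_{i\in\cE}d_i\ge V$. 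But the identity together with $r_\gs w_{r_\gs}=1$ (harmonic!) gives $\sum_{i\in\cE}d_i=\sum_{\gs:r_\gs\ge1}v_\gs\le V$. I expect the main obstacle to be that this only yields equality, not an immediate contradiction: one must observe that \emph{tightness} now forces both that every submodularity step is tight, i.e.\ no voter approves both $c^*$ and any $i\in\cE$, and that $r_\gs\ge1$ for \emph{every} $\gs$, in particular for $\gs\in\cA$; but such a $\gs$ approves $c^*$ and one of its representatives $i\in\cE$ simultaneously, which is the contradiction.

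\emph{\qJR{} when $w_t\le1/t$ for $t\ge2$ (the easy direction of the third assertion).} Here $\ell=1$. If a cohesive $\cA$ with $|\cA|\ge V/\MM$ and common candidate $c$ were wholly unrepresented, then $r_\gs=0$ for $\gs\in\cA$ and $c\notin\cE$; adding $c$ to \emph{any} $\cE\setminus\set i$ still gives every $\gs\in\cA$ a first representative, so $\Delta(c\mid\cE\setminus\set i)\ge|\cA|\ge V/\MM$ with no appeal to submodularity. Since $r_\gs w_{r_\gs}\le1$ whenever $r_\gs\ge1$ (this is exactly where $w_t\le1/t$ enters) and voters in $\cA$ contribute $0$, the identity gives $\sum_{i\in\cE}d_i\le V-|\cA|$, so some $d_i\le(V-|\cA|)/\MM<V/\MM$; swapping that $i$ for $c$ strictly raises $F$, contradicting optimality.

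\emph{The two converse directions, by construction.} Both reduce to matching the sharp threshold $1/j$, which is the real work, since the naive one-bloc example only reaches $1/(j-1)$; I therefore amplify with a growing parameter. If $w_j>1/j$ for some $j\ge2$ I take $p$ disjoint blocs of $m$ identical voters, bloc $t$ approving its own private $j$ candidates, plus a cohesive $\cA$ of $a$ voters approving one further candidate $c$, with $\MM=pj$; there are then exactly $\MM+1$ candidates, so the optimum omits precisely one, and comparing ``omit $c$'' against ``omit a bloc candidate'' shows $c$ is excluded (so \qJR{} fails) iff $a<m w_j$, while the size bound $|\cA|\ge V/\MM$ reads $a\ge pm/(pj-1)$. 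Since $p/(pj-1)\to1/j$, these are compatible for large $p$ exactly when $w_j>1/j$; and as \qEJR$\implies$\qJR, \qEJR{} fails too. For the ``any other function'' assertion it then remains to treat $w_j<1/j$: let $\cA$ be $a$ voters approving a common $\set{c_1,\dots,c_j}$ and adjoin $\MM-j+1$ singleton groups of $d$ voters, each approving one private candidate, with $\MM$ seats and again $\MM+1$ candidates. Dropping some $c_t$ (leaving every $\gs\in\cA$ with only $j-1$ elected) beats dropping a private candidate iff $d>a w_j$, whereas the \qEJR{} size bound for $\ell=j$ is $a\ge\frac{j(\MM-j+1)}{\MM-j}\,d\to jd$; for large $\MM$ these coexist precisely when $w_j<1/j$, yielding an \qEJR{} violation. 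The main obstacle in this last paragraph is purely the threshold matching, which the parameters $p$ and $\MM$ are chosen to achieve.
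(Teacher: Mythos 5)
You could not have known this, but the paper itself contains no proof of this theorem: it is one of the results that \refS{Sprop} states explicitly ``without proofs'', citing \cite{EJR}. So the only possible comparison is with the proof in that reference, and with correctness. On correctness your proposal holds up: all three parts are right. The heart of the matter is the \qEJR{} claim, and you correctly identified and resolved the real difficulty there. Comparing $\cE$ with the swaps $\cE\setminus\set i\cup\set{c^*}$ via optimality and submodularity gives only weak inequalities, and the counting identity $\sum_{i\in\cE}d_i=\sum_{\gs: r_\gs\ge1}v_\gs\le V$ closes the circle only up to equality; your tightness analysis is exactly what is needed and is sound (equality in the submodularity step forces $v_\gs=0$ for every $\gs$ containing both $c^*$ and some $i\in\cE$; equality in the counting bound forces $r_\gs\ge1$ for every cast ballot; and any ballot of $\cA$, which contains $c^*$ and, by the second point, some $i\in\cE$, then contradicts the first). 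The proof in \cite{EJR} differs precisely at this point: instead of removing $i$ from $\cE$, it sums the marginal contributions $F(\cE\cup\set{c^*})-F\bigpar{\cE\cup\set{c^*}\setminus\set i}$ over $i\in\cE$. In that sum each ballot of $\cA$ contributes $r_\gs w_{r_\gs+1}\le 1-1/\ell$ instead of $1$, so the total is at most $V-|\cA|/\ell$, and swapping the cheapest $i$ for $c^*$ yields a strict gain of at least $V/\MM^2$; strictness comes for free and no equality case arises. Both routes are valid: the reference's is shorter, while yours makes more visible where harmonicity ($r_\gs w_{r_\gs}=1$) enters. Your positive \qJR{} direction and both counterexample constructions (with $\MM+1$ candidates, so that the optimum omits exactly one) are also correct and of the same type as those in \cite{EJR}.

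One detail should be made explicit in the last part. In construction (a), the existence of an integer $a$ with $pm/(pj-1)\le a<mw_j$ does not follow from taking $p$ large alone: for fixed small $m$ (e.g.\ $m=1$ when $w_j\le1$) the interval contains no positive integer, however large $p$ is. You need $m$ large as well; for instance, first choose $m$ with $m(w_j-1/j)>2$ and then $p$ so large that $m\bigpar{p/(pj-1)-1/j}<1$. The same remark applies to $d$ and $\MM$ in construction (b). This is a one-line repair, not a flaw in the idea, but as written ``for large $p$'' (respectively ``for large $\MM$'') does not suffice.
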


\begin{theorem}[\cite{SanchezEtAl}]
 Thiele's addition method satisfies \qJR{} for $\MM\le5$, but not for $\MM\ge6$.
\end{theorem}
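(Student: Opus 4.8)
The plan is to reduce the failure of \qJR{} to a purely combinatorial feasibility question about the sequence of winning vote counts, settling that question by a finite check for $\MM\le5$ and by an explicit construction for $\MM\ge6$. By homogeneity of the method we may normalise the relevant block so that $|\cA|=1$, where $\cA$ is the cohesive group. For the positive direction, suppose $\MM\le5$ and that \qJR{} fails: there is a set $\cA$ of voters with $|\cA|\ge V/\MM$ and a common candidate $c$ lying on every ballot of $\cA$, yet no candidate approved by any voter in $\cA$ is elected. Then throughout the entire run of Thiele's addition method every ballot of $\cA$ has none of its names elected, so it keeps full weight $1$, and $c$ retains a vote count of at least $|\cA|$ in every round.

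Writing $s_j$ for the winning vote count in round $j$, I would record three facts. First, $s_1\ge s_2\ge\dots\ge s_\MM$: the weight of every ballot is non-increasing over the rounds, so each remaining candidate's count can only drop, and the round-$(j{+}1)$ winner's count was already at most the round-$j$ winner's. Second, $s_j\ge|\cA|\ge V/\MM$ for every $j$, since $c$ is available in round $j$ with count at least $|\cA|$ and is not chosen, so the maximum $s_j$ dominates it. Third, $\sum_{j=1}^{\MM}s_j=\sum_\gs v_\gs f(|\gs\cap\cE|)$ with $f$ as in \eqref{f}, by telescoping the harmonic increments, and here the ballots of $\cA$ contribute nothing. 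The crux is that these three facts are incompatible for $\MM\le5$. Encode the non-$\cA$ voters by non-negative weights $x_S$ on subsets $S\subseteq\{1,\dots,\MM\}$ of total mass $V-|\cA|\le\MM-1$, where a unit of pattern $S$ contributes $1/r$ to round $j$ when $j$ is the $r$-th smallest element of $S$. The question becomes whether such weights, of total mass at most $\MM-1$, can achieve $s_1\ge\dots\ge s_\MM$ with every $s_j$ strictly exceeding $1$; if only equalities are attainable, $c$ is never strictly beaten and \qJR{} holds. I would verify, for each $\MM\in\{1,\dots,5\}$, that the maximum of $\min_j s_j$ over this constrained system equals $1$ and is attained only with a tie.

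I expect this finite infeasibility for $\MM\le5$ to be the main obstacle, precisely because the naive global estimate is hopelessly weak: bounding each ballot's satisfaction by $f(\MM)$ gives only $\sum_j s_j\le(V-|\cA|)f(\MM)$, which against $\sum_j s_j\ge\MM|\cA|$ yields a contradiction merely for $\MM\le2$, and in fact would wrongly permit failures already at $\MM=3$. What actually pins down the threshold is the interaction of the monotonicity $s_1\ge\dots\ge s_\MM$ with the harmonic decay of weights, used round by round rather than in aggregate: monotonicity forbids ``loading'' the late rounds (a configuration with a large $s_\MM$ would have been elected earlier), while the decay forces the last round to fall short once the supporting mass is capped at $\MM-1$. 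I anticipate that the bare relaxation (monotonicity plus $s_j\ge1$ plus mass $\le\MM-1$) may not by itself be tight, so the delicate part is identifying exactly which additional consequences of greedy selection — namely that the round-$j$ winner genuinely maximises the count among all unelected candidates — are needed to force equality, and organising the resulting case analysis efficiently.

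For the negative direction I would exhibit an explicit voter profile for $\MM=6$ in which Thiele's addition method elects six candidates in six rounds, each strictly outscoring in its round the single common candidate $c$ of a block $\cA$ of size exactly $V/6$, so that $c$ (and hence every candidate approved by $\cA$) is shut out; concretely this means choosing the patterns $S_u$ above so that $\min_j s_j>1$ is realised with total mass $5$, which is feasible only once $\MM\ge6$. I would then lift this to every $\MM\ge6$ by padding: adjoin disjoint voter groups that each claim one further seat without ever assisting $\cA$, rescaling the group sizes so that $|\cA|$ stays at the $V/\MM$ threshold and the same six rounds keep $c$ excluded. Checking that the lift preserves the proportions and the ordering of the rounds is routine and I would leave it to direct computation.
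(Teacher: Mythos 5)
The paper offers no proof to compare against here: this theorem is quoted from \cite{SanchezEtAl}, and the surrounding text explicitly says these results are stated without proofs. Judged on its own merits, your proposal sets up a sensible framework --- the normalization $|\cA|=1$, the monotonicity $s_1\ge\dots\ge s_\MM$ of winning counts, the bound $s_j\ge 1$, the telescoping identity for $\sum_j s_j$, and the correct observation that the crude estimate $\sum_j s_j\le(\MM-1)f(\MM)$ settles only $\MM\le 2$. But it is a plan, not a proof: the two steps that constitute the entire content of the theorem, namely the infeasibility argument for $\MM\in\{3,4,5\}$ and the explicit counterexample for $\MM=6$, are announced (``I would verify\dots'', ``I would exhibit\dots'') but never carried out, and your assertion that beating $c$ with mass $\MM-1$ ``is feasible only once $\MM\ge 6$'' is precisely the theorem being assumed.

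Moreover, the gap is not one of omitted routine detail, because the relaxation you propose to check by finite means is provably too weak. Take $\MM=3$ and put $x_{\{1,2,3\}}=\tfrac{24}{23}$, $x_{\{2,3\}}=\tfrac{12}{23}$, $x_{\{3\}}=\tfrac{10}{23}$: the total mass is $2=\MM-1$ and $s_1=s_2=s_3=\tfrac{24}{23}>1$, so monotonicity, the mass cap, and strict domination of $c$ all hold --- even though \qJR{} does hold for $\MM=3$. The resolution is the point you only gesture at: this configuration is not realizable by the greedy run, since at round $1$ the candidate slated to win round $3$ already has count $\tfrac{46}{23}$, so Thiele's method would elect it first and then elect $c$. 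Hence the constraints $\sum_{S\ni j'}x_S/\bigl(|S\cap\{1,\dots,j-1\}|+1\bigr)\le s_j$ for all $j<j'$ must be part of the system, and establishing infeasibility of that enlarged system for $\MM\le 5$ is the actual work of the theorem; it is absent here. Finally, the padding step from $\MM=6$ to larger $\MM$ is not ``routine'' as stated: if the base example sits exactly at the threshold $|\cA|=V/6$, then any added group large enough to beat $c$ strictly pushes $|\cA|$ below the new threshold $V'/\MM$, while groups of size exactly $|\cA|$ only tie with $c$; one must first perturb the base example (e.g.\ enlarge $\cA$ slightly, using that its six winners beat $c$ strictly) to create slack before the padding goes through.
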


\begin{theorem}[\cite{EJR}]
 Thiele's addition method with a general satisfaction function $f(n)$ satisfies
 \qJR{} if and only if  $f(n)$ is the weak satisfaction function \eqref{fweak}.
\end{theorem}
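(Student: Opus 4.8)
The statement is an equivalence whose two directions need quite different arguments.

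For the \emph{if} direction, suppose $f$ is the weak function \eqref{fweak}, so that $w_1=1$ and $w_n=0$ for $n\ge2$; then the addition method is Greedy Approval Voting, in which the count of a candidate in any round is simply the number of ballots containing it on which no candidate has yet been elected. I would argue by contradiction: suppose \qJR{} fails, so there is a set $\cA$ of voters with $|\cA|\ge V/\MM$ sharing a common candidate $c$, none of whose approved candidates is ever elected. Since no candidate approved by any ballot in $\cA$ is elected, each ballot in $\cA$ always has zero elected candidates and hence counts fully for $c$ in every round; thus $c$'s count never drops below $|\cA|\ge V/\MM$. Because the greedy rule always elects a current maximum, every one of the $\MM$ elected candidates had count $\ge|\cA|$ when chosen. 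The key observation is that under the weak weights a ballot contributes to the count of at most one elected candidate, namely the first one on it to be elected, after which it is spent and counts $0$ forever; hence the sets of ballots accounted for by the successive winners are pairwise disjoint and disjoint from $\cA$. Summing the winners' counts gives at least $\MM\cdot|\cA|\ge V$ distinct ballots lying outside $\cA$, whereas there are only $V-|\cA|<V$ such ballots, a contradiction. (Ties cause no trouble, since $c$ would be available as a maximal choice whenever it is not strictly beaten.)

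For the \emph{only if} direction I would prove the contrapositive: if $f$ is not weak, so $w_{n_0}>0$ for some $n_0\ge2$ (in the natural case of non-increasing increments this already means $w_2>0$), then I would exhibit an election violating \qJR. The guiding principle is a bookkeeping identity: if in a given run the ballot $\gs$ ends with $s_\gs$ of its approved candidates elected, then the sum over the elected candidates of their counts-at-election equals $\sum_\gs v_\gs f(s_\gs)$, since ballot $\gs$ contributes $w_1,\dots,w_{s_\gs}$ to its successively elected candidates. To starve a cohesive group $\cA$ that approves only $c$ and is held at the threshold $|\cA|\ge V/\MM$, every winner must beat $c$'s constant count $|\cA|$, so the winners' counts sum to more than $\MM|\cA|\ge V=\sum_\gs v_\gs$; by the identity this forces $\sum_\gs v_\gs\bigl(f(s_\gs)-1\bigr)>0$, \ie enough ballots outside $\cA$ must be \emph{reused} (reach $s_\gs\ge2$, each then carrying a surplus $\ge w_{n_0}>0$) to outweigh the $|\cA|$ wasted $\cA$-ballots. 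The construction accordingly builds a large faction whose ballots are deliberately reused across rounds, luring the greedy rule into repeatedly topping up already-represented candidates rather than ever electing $c$; this is the same phenomenon behind \cite{SanchezEtAl}, where the proportional weights already violate \qJR{} once $\MM\ge6$.

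The hard part is making this work for \emph{arbitrarily small} $w_{n_0}$. A single reused faction of size $m$ yields a reuse count of only $m\,w_{n_0}<m\le|\cA|$, so it can never beat $c$ by itself; the required surplus must instead be aggregated by having many factions pile onto each reuse winner, while at the same time ensuring---against the pull of those very popular shared candidates---that the intended ``first'' candidates are elected earliest and that $c$ loses every single round. Balancing these opposing forces seems to demand that the number of seats $\MM$ grow as $w_{n_0}\downarrow0$, together with a symmetric, balanced family of overlapping approval sets in which, round by round, many candidates sit tied just above $|\cA|$. Verifying the entire greedy trajectory---that $c$ is beaten in each of the $\MM$ rounds---is the technical core, and I would carry it out by induction on the round number, using the symmetry of the design to track all the counts explicitly.
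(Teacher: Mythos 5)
A preliminary remark: the paper itself contains no proof of this theorem. It is one of the results in \refS{Sprop} that the author explicitly states ``without proofs'', attributing it to \cite{EJR}; so the benchmark is the proof in \cite{EJR}, whose content is almost entirely an explicit counterexample construction for the ``only if'' half.

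Your ``if'' half is correct and complete, and it is essentially the standard argument: with the weak function \eqref{fweak} a ballot contributes (value $1$) to the count of at most one winner, namely the first of its approved candidates to be elected, so under a putative \qJR{} violation by a group $\cA$ with common candidate $c$ the winners' counted ballot sets are pairwise disjoint, disjoint from $\cA$, and each of size at least $|\cA|\ge V/\MM$; hence $V\ge \MM|\cA|+|\cA|>V$, a contradiction. The handling of ties is also fine.

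The ``only if'' half, however, is where the entire substance of the theorem lies, and there you have a plan, not a proof. Your bookkeeping identity (the winners' counts-at-election sum to $\sum_\gs v_\gs f(s_\gs)$) and the resulting necessary conditions are correct, but the construction itself --- an election in which a group of size $\ge V/\MM$ approving only $c$ loses every one of the $\MM$ rounds --- is exactly what you defer as ``the technical core'', and it is not routine. Worse, the design principle you indicate, a ``symmetric, balanced'' family in which ``many candidates sit tied just above $|\cA|$'' round by round, provably cannot succeed when the relevant weight is small. Take the non-weak vector $w=(1,w,0,0,\dots)$ with $0<w\le\tfrac12$, write $a=|\cA|$, and decompose the $r$-th winner's count as $C_r=F_r+wU_r$ (fresh and once-used ballots, so $C_r\ge a$). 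All $F_r+U_r$ of these ballots are fresh at round $1$, so the greedy rule forces $C_1\ge F_r+U_r=C_r+(1-w)U_r$, hence $U_r\le (C_1-a)/(1-w)$ for every $r$; a \qJR{} violation also forces the number $N$ of non-$\cA$ ballots to satisfy $N\le(\MM-1)a$, and the fresh sets are disjoint subsets of these. If all winners had the same count $a+E$, this would give $\MM(a+E)=\sum_r F_r+w\sum_r U_r\le(\MM-1)a+\tfrac{w}{1-w}\MM E$, i.e.\ $a\le\MM E\bigl(\tfrac{w}{1-w}-1\bigr)\le 0$, a contradiction regardless of $\MM$. So any genuine counterexample must be strongly asymmetric: early winners elected with large excesses over $a$ that decay on a controlled schedule, each late winner's reuse ballots split into small parcels spread over many early winners' fresh sets, and $\MM$ growing at least like $1/w$; in addition, the case $w_2=\dots=w_{n_0-1}=0<w_{n_0}$ needs a further device, since there a ballot yields no surplus at all until $n_0-1$ of its candidates have been elected entirely by others. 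None of this is carried out, so what you have proved is only the easy direction together with a necessary condition on counterexamples for the hard one.
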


\begin{theorem}[\cite{BrillEtAl}]
  \phragmen's unordered method satisfies \qPJR, but not \qEJR.
\end{theorem}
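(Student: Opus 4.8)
The plan is to prove the two assertions separately: \qPJR{} by a counting argument in the voting-power (load) formulation of \refS{SPhru1}, closely parallel to the proof of \refT{TPhPC}, and the failure of \qEJR{} by exhibiting an explicit counterexample (as in \cite{BrillEtAl}).

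For \qPJR, write $\cU:=\bigcup_{\gs\in\cA}\gs$ for the set of all candidates approved by some voter in $\cA$, and suppose, for a contradiction, that only $k:=|\cE\cap\cU|<\ell$ of them are elected. I work with the final voting power $t:=t\xx{\MM}$ per ballot, thinking of the voting power as increasing continuously with time (\refR{Rtime}). Each ballot of $\cA$ can assign load only to candidates it approves, hence only to the $k$ candidates in $\cE\cap\cU$; since every elected candidate carries total assigned load exactly $1$, the load coming from $\cA$ is at most $k$. Moreover, since $|\cC|\ge\ell>k$, at least one common candidate $c\in\cC$ is unelected; every ballot of $\cA$ approves $c$, so the free (unassigned) voting power of the ballots in $\cA$ is all available to $c$, and this is at most $1$ (otherwise $c$ would have been elected). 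Hence the total voting power $|\cA|\,t$ of the ballots in $\cA$ is at most $k+1\le\ell$, which with $|\cA|\ge\frac{\ell}{\MM}\VV$ gives $\VV t\le\MM$. For the reverse inequality, the $\MM-k$ elected candidates outside $\cU$ receive no load from $\cA$, so their total load $\MM-k\ge\MM-\ell+1$ is drawn entirely from the remaining $\VV-|\cA|\le\frac{\MM-\ell}{\MM}\VV$ ballots; thus $\frac{\MM-\ell}{\MM}\VV t\ge\MM-\ell+1$, i.e.\ $\VV t\ge\MM\cdot\frac{\MM-\ell+1}{\MM-\ell}>\MM$ for $\ell<\MM$ (the case $\ell=\MM$ forces $\cA$ to be all voters and is trivial). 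This contradicts $\VV t\le\MM$, proving \qPJR.

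For the failure of \qEJR{} I would give an explicit election, guided by the following structural observation: if the cohesive set $\cA$ shares the common candidates $\cC$ and even one candidate of $\cC$ is elected, then every voter of $\cA$ already has that candidate, and combined with any further elected candidate from $\cU$ (which \qPJR{} guarantees) this would give some voter $\ell$ elected approved candidates. Hence a counterexample must force \phragmen's method to elect at most $\ell-2$ of the $\ge\ell$ common candidates, meeting the group's entitlement instead through $\ge2$ voter-specific candidates spread so that no single ballot of $\cA$ contains $\ell$ elected names. Such examples exist \cite{BrillEtAl}; the mechanism is that, as the subgroup-specific candidates (co-supported by voters outside $\cA$) are elected one by one, the place numbers of the ballots in $\cA$ rise and depress the \jfr{s} \eqref{wi} of the heavily-approved common candidates below the winning threshold---the same vote-spreading phenomenon exhibited for Thiele's method in \refE{Ega3-5}. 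Verification is then a finite computation: run formulation~2 of \refS{SPhru2} round by round, read off $\cE$, and check that the \qEJR{} hypothesis holds while its conclusion fails.

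The main obstacle is the construction for \qEJR. The structural constraint above shows the example cannot be small or naive: one must make common candidates that are approved by a large cohesive group actually lose to voter-specific candidates, which requires a delicate balance of outside support (too much outside support elects those candidates outright and restores \qEJR, too little lets the common candidates win). The \qPJR{} half, by contrast, is routine once the two load bounds above are in place; the only point needing care there is the free-power bound, which relies on the existence of an unelected common candidate and on the fact that in the final state every unelected candidate has available voting power at most $1$.
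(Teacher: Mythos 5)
The paper never proves this theorem at all: it is one of the results quoted from \cite{BrillEtAl} explicitly ``without proofs'', so your proposal has to be judged on its own terms rather than against an internal argument.

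Your \qPJR{} half is correct and self-contained. It is essentially the paper's own proof of \refT{TPhPC} transplanted to the \qPJR{} setting: you bound the total final voting power $|\cA|t$ of the group from above by (assigned load) $+$ (free power) $\le k+1\le\ell$, using that load from ballots in $\cA$ can only sit on the $k=|\cE\cap\cU|$ elected candidates approved within $\cA$ (each elected candidate carrying total load exactly $1$) and that an unelected common candidate, present on every ballot of $\cA$, caps the group's free power at $1$; and you bound it from below through the $\MM-k\ge\MM-\ell+1$ elected candidates outside $\cU$, whose unit loads must come entirely from the other $\VV-|\cA|\le\frac{\MM-\ell}{\MM}\VV$ ballots. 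The two adaptations this requires --- bounding the assigned load via $\cE\cap\cU$ rather than $\cE\cap\cC$ because the ballots in $\cA$ are no longer identical, and replacing the Droop-type threshold $\frac{\ell}{\MM+1}V$ of \refT{TPhPC} by the Hare-type threshold $\frac{\ell}{\MM}\VV$, which is why the contradiction now needs the strict inequality $\MM\cdot\frac{\MM-\ell+1}{\MM-\ell}>\MM$ and a separate (indeed trivial) case $\ell=\MM$ --- are both handled correctly; I checked the inequalities and they hold, ties included.

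The \qEJR{} half, however, is a genuine gap. Disproving \qEJR{} means exhibiting an election in which \phragmen's unordered method demonstrably violates it, and you never do this. Your structural observation is correct (given \qPJR, any counterexample group must have $|\cE\cap\cC|\le\ell-2$: if $\ell-1$ common candidates were elected, \qPJR{} supplies one more elected candidate from $\cU$, and the voter of $\cA$ approving it would have $\ell$ elected approved names), but it only narrows where a counterexample can live; it does not show one exists. At the decisive moment you write ``such examples exist \cite{BrillEtAl}'', which is precisely the assertion to be proved, so the argument is circular, and the remaining text (``verification is then a finite computation'') describes a computation you have not performed on an instance you have not specified. In particular, nothing in your proposal rules out the possibility that the ``delicate balance of outside support'' you invoke is unachievable. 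To complete the proof you must write down an explicit profile, run the algorithm of \refS{SPhru2} on it round by round, and verify both that the \qEJR{} hypothesis holds for the chosen group and that no single ballot of that group ends with $\ell$ elected approved candidates.
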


\section{\phragmen's ordered method and STV}\label{SPhSTV}
\phragmen's and Thiele's ordered methods use ordered ballots just as the
more well-known election method STV (\refApp{ASTV}).
Both methods are clearly different from STV, which is seen already in the
case $\MM=1$ of electing a single person, when they reduce to simple plurality
(\refT{T1}), while STV reduces to Alternative Vote.
Nevertheless, there are strong connections between \phragmen's method and
STV; both methods can be regarded as giving each ballot a certain voting
power that can be used to elect the candidates on the ballot (one at a time,
in order), and
when the voting power of a candidate is more than enough to elect the
candidate, the surplus is transfered to the next candidate.
 Thiele's method seems to be founded on different principles and will not be
 considered further in this section.

To see the connection in more detail, 
consider an election of $\MM$ seats by \phragmen's ordered method, using the
formulation in \refSS{SPhro1} (and the notation in \refSS{SPhru1}).
We thus give each ballot voting power $t\xx \MM$ (found at the end of the
calculations).
We obtain an equivalent formulation of the method where each ballot has
voting power 1 if we define
$Q:=1/t\xx \MM$ and multiply every voting power by $Q$.
This means that every ballot has voting power 1, and that a candidate needs
voting power $Q$ to be elected, just as in STV if the quota is $Q$.
Note that by \eqref{Wntn}, $Q=W\xx \MM$ is the \jfr{} for the last elected
candidate if we use the formulation in Sections \ref{SPhro2}--\ref{SPhro3}.
 
Let us use the algorithm in \refS{SPhro2}, and let $i_j$ be the candidate
elected in round $j$, $j=1,\dots,\MM$.
Consider a ballot $\gb$ in some round $k\ge1$, and
let $\ell<k$ be the latest round before $k$ when the ballot 
participated in the election of a candidate
(meaning that the elected candidate $i_\ell$ was the current top name on the
ballot  that round); if the ballot has not yet participated in the election
of any candidate let $\ell=0$.
Then the place number of the ballot in round $k$,
which is the voting power assigned to the already elected candidates, is
$t\xx\ell$, with $t\xx0=0$
(see Remarks \ref{Rplace}--\ref{Rsheppard}).
In the new scale, the ballot has used
$Qt\xx\ell$ of its value to the already elected, and its remaining value
is $1-Qt\xx\ell$. (Note that $Qt\xx\ell=t\xx\ell/t\xx \MM\le1$.)

In round $k$, $i_k$ is elected. Suppose that then there are $n_k$ ballots
currently valid for $i_k$ (\ie, with $i_k$ as the current top name), and that
of these $n_{k\ell}$ have participated in the election of $i_\ell$, but not in
the election in any later round, $\ell=1,\dots,k-1$, while $n_{k0}$ of them
have not participated in the election of any candidate before $i_k$.
Thus, the number of ballots valid for $i_k$ is
\begin{equation}\label{nk}
  n_k=\sum_{\ell=0}^{k-1} n_{k\ell},
\end{equation}
and their total place number is
\begin{equation}\label{pk}
  P_k:=\sum_{\ell=1}^{k-1} n_{k\ell}t\xx \ell
=\sum_{\ell=0}^{k-1} n_{k\ell}t\xx \ell.
\end{equation}
By the formulation of \phragmen's method in \refS{SPhro1},
if each ballot has voting power $t\xx k$, then the elected candidate $i_k$
will have voting power 1 from the $n_k$ ballots valid for $i_k$, \ie,
subtracting the voting power assigned to previously elected candidates,
\begin{equation}\label{p12}
  1 = n_kt\xx k - P_k
.
\end{equation}

On the other hand, if we consider the values of the ballots
in the new scale defined above, 
then the total remaining value of the $n_k$ ballots valid for
$i_k$ is, using \eqref{nk} and \eqref{pk},
\begin{equation}\label{p12b}
 \sum_{\ell=0}^{k-1} n_{k\ell}(1-Qt\xx\ell)
=
 \sum_{\ell=0}^{k-1} n_{k\ell} - Q  \sum_{\ell=0}^{k-1} n_{k\ell}t\xx\ell
=n_k-QP_k.
\end{equation}
Let us, as in STV, subtract $Q$ (which is used to elect $i_k$);
then there remains a surplus,
using \eqref{p12},
\begin{equation}
  \label{p13}
n_k-QP_k-Q=n_k-Q(P_k+1)
=n_k-Qn_kt\xx k
=n_k(1-Qt\xx k).
\end{equation}
(Since $Qt\xx k=t\xx k/t\xx \MM\le1$, this surplus is $\ge0$, which shows that
the total value \eqref{p12b} is at least $Q$.)   After the election
of $i_k$, these ballots have, as we have seen, the value $1-Qt\xx k$ each,
which equals the surplus \eqref{p13} divided equally between the $n_k$
ballots that participated in the election of $i_k$.  This is exactly as in
the inclusive Gregory method for STV (\refApp{ASTV}\ref{STV-inclusive}).

We summarize in a theorem.
\begin{theorem}\label{TPhSTV}
Consider an election with ordered ballots.
and  let $Q:=W\xx \MM$, the \jfr{} for the last elected using
  \phragmen's method.
Then \phragmen's method yields the same result as STV with the inclusive Gregory
method and this quota $Q$, provided that the surpluses are transfered in the
order the 
candidates are elected by \phragmen's method.
\end{theorem}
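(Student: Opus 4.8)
The plan is to notice that the computations \eqref{nk}--\eqref{p13} already carry all the analytic content, so that the proof reduces to organizing them into an induction on the seat index $k$. Throughout I would work in the rescaled picture set up above, in which every ballot carries value $1$ and the quota is $Q=1/t\xx\MM=W\xx\MM$ (the equality by \eqref{Wntn}), and I would write $i_1,\dots,i_\MM$ for the candidates in the order \phragmen's method elects them. The goal is to show that running STV with quota $Q$, transferring surpluses by the inclusive Gregory method, and processing the elected candidates in exactly this order, elects precisely the set $\set{i_1,\dots,i_\MM}$.

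First I would fix the induction hypothesis: after $i_1,\dots,i_{k-1}$ have been elected and their surpluses transferred (in this order), every ballot $\gb$ carries current value $1-Qt\xx{\ell_\gb}$, where $\ell_\gb\in\set{0,\dots,k-1}$ is the last round in which $\gb$ counted for an elected candidate (with $t\xx0=0$), and the first not-yet-elected name on $\gb$ agrees with its current top candidate in \phragmen's process. The base case $k=1$ is immediate: nothing is elected, so $\ell_\gb=0$ and every value is $1-Q\cdot0=1$, while the first preferences obviously coincide in the two processes.

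For the inductive step I would read off from \eqref{p12b} that the current STV tally of $i_k$ is $\sum_\gb(1-Qt\xx{\ell_\gb})=n_k-QP_k$, the sum running over the $n_k$ ballots valid for $i_k$; by \eqref{p12} and \eqref{p13} this equals $Q+n_k(1-Qt\xx k)$, which is at least $Q$ because $Qt\xx k=t\xx k/t\xx\MM\le1$ for $k\le\MM$. Hence $i_k$ is over quota and is a legitimate candidate to elect, and since the stipulated transfer order is \phragmen's order, electing $i_k$ next is exactly the prescribed STV step; in particular no elimination is ever forced. Its surplus is $n_k(1-Qt\xx k)$ by \eqref{p13}, and the inclusive Gregory rule gives every one of the $n_k$ ballots counting for $i_k$---including those carried in by earlier transfers---the common transfer value obtained by dividing this surplus by $n_k$, namely $1-Qt\xx k$. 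Since $\ell_\gb$ is thereby updated to $k$ for precisely these ballots and all others are untouched, the induction hypothesis is restored for $k+1$; moreover these ballots now advance to their next not-yet-elected name, matching the reassignment of freed voting power in \phragmen's method. After $\MM$ steps the elected set is $\set{i_1,\dots,i_\MM}$, which is \phragmen's outcome, proving the theorem.

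The delicate point---and the reason the ordering proviso appears in the statement---is that $i_k$ need not be the only candidate over quota when its turn comes: an arbitrary not-yet-elected candidate $j$ has tally $Q+n_j\bigpar{1-Qt_j}$, with $t_j=(1+P_j)/n_j$ its \phragmen{} election time, which is also $\ge Q$ whenever $t_j\le t\xx\MM$, so several candidates may exceed the quota simultaneously. Ordinary STV leaves the processing order of such candidates unspecified, and the outcome can depend on it; the proviso removes this ambiguity by pinning the order to \phragmen's, and the induction above shows that this particular inclusive-Gregory run is self-consistent, each $i_k$ being over quota exactly when it is processed. I would close, as elsewhere in the paper, by noting that ties are assumed to be resolved the same way in the two methods.
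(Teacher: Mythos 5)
Your proof is correct and follows essentially the same route as the paper: the paper's own proof is precisely the computation \eqref{nk}--\eqref{p13} in the rescaled picture (ballot value $1$, quota $Q=1/t\xx\MM$), which it then ``summarizes'' as the theorem, and your induction on the seat index simply packages that computation step by step, with the quota check $Q+n_k(1-Qt\xx k)\ge Q$ and the transfer value $1-Qt\xx k$ matching the inclusive Gregory rule exactly as in the paper. Your explicit remark on why the ordering proviso is needed (several candidates can be over quota simultaneously) is a nice touch that the paper only makes informally after the theorem and via Example~\ref{EPhragmen-stv}, but it does not change the substance of the argument.
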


We see also that every elected will reach the quota $Q$, so that there will
be no eliminations; in fact, the final surplus $n_\MM(1-Qt\xx \MM)=0$, and thus  
$Q$ is the largest quota that allows $\MM$ candidates to be elected without
eliminations (still
provided the surpluses are transfered in the prescribed order).

Consequently,
\phragmen's ordered method can be seen as a variant of STV without
eliminations, and with a quota $Q$ that is calculated dynamically instead of
as a fixed proportion of the number of votes. 
(The latter difference is the same as the
difference between divisor and quota methods for list election methods,
see  \refFn{fDivisor} in \refApp{Alist}. Thus \phragmen's method can
be seen as a variant of STV  related to divisor methods, which is not
surprising since 
\phragmen's method was conceived as a generalization of D'Hondt's method.)

However, this correspondence with STV is not perfect, since STV is sensitive
to the order the surpluses are transfered, and standard versions of STV may
do this in a different order than \phragmen's method yields.

\begin{example}[\phragmen's method and STV]\label{EPhragmen-stv}
\quad

Ordered ballots.
  4 seats. 
  \begin{val}
  \item [22]ABCD
  \item [11]ABE
  \item [11]CE	
  \end{val}
Using \phragmen's method, candidates are elected in the following order:
\begin{enumerate}
\item 
A is elected with  \jfr{} $W_\rmA=33$.
\item 
B is elected with \jfr{}  $W_\rmB=\frac{33}2$. 
\item 
The three groups of ballots now have the place numbers
$\frac43$, $\frac23$, $0$ (with sum 2), which give C and E the \jfr{s}
\begin{align*}
  W_\rmC=\frac{22+11}{1+4/3}=\frac{99}7
\qquad\text{and}\qquad
  W_\rmE=\frac{11}{1+2/3}=\frac{33}5.
\end{align*}
Thus C is elected.
\item 
The three groups of ballots now have the place numbers
\begin{align*}
  \frac{22}{W_\rmC}
=   \frac{22}{99/7}
=\frac{14}9,
\qquad
\frac{11}{W_\rmB}=
\frac{11}{33/2}=
\frac23=\frac{6}9,
\qquad
\frac{11}{W_\rmC}
= \frac{11}{99/7}
=\frac{7}9
\end{align*}
(with the sum 3), which gives D and E the \jfr{s}
\begin{align*}
  W_\rmD=\frac{22}{1+14/9}=\frac{22\cdot 9}{23}
\qquad\text{and}\qquad
  W_\rmE=\frac{22}{1+13/9}=\frac{22\cdot 9}{22}=9.
\end{align*}
Thus E is elected.
\end{enumerate}
Elected: ABCE.

Consider now instead STV with the inclusive Gregory method and
quota  $Q=W_\rmE=9$ as in \refT{TPhSTV}. 
($Q$ happens to be the Droop quota rounded upwards in this
example. We could also obtain the Droop quota without rounding by adding a
single 
vote on F. In general, there is no direct relation between $Q$ in
\refT{TPhSTV} and the Droop quota.) 
By \refT{TPhSTV}, we will obtain the same elected ABCE if the surpluses are
transfered in this order.
This is also easy to see directly:
A has surplus 24, which is transfered to B. B has surplus 15 which is
transfered with 10 to C and 5 to E. C has surplus 12 of which is 8 is transfered
to D and 4 to E. E now has exactly $Q=9$ votes and gets the fourth seat,
while D only has 8 votes.
\xfootnote{
In this example, and assuming that the surpluses are transfered in this
order,
the result will be the same for every quota $Q\le99/7=14\frac{1}7$,
although if  $Q>9$ then $E$ will not reach the quota and if $Q\le
198/23\doteq8.61$ 
then both  $\rmD$ and $\rmE$ will reach the quota, although E wins.}

However, in standard versions of STV, such as the inclusive Gregory method 
used for the Australian Senate, 
the surpluses are transfered in a different order:
$\rmA$ and $\rmC$ reach the quota  on their first-preference
votes and are elected.
Their surpluses  24 and 2 are transfered to B and E, respectively.
This makes B reach the quota, so B is elected to the third seat,
with a surplus 15 that is transfered with 10 to D and 5 to E.
Consequently, D gets the final seat with 10 votes against 7 for E.
Elected: ABCD.
\end{example}

\begin{remark}
\refT{TPhSTV} yields also the following 
(rather complicated) description of \phragmen's method purely in
  terms of STV:
With $\MM$ seats, do $\MM$ distributions of seats by STV with the inclusive
Gregory method, the first time for 1 seat, the second for 2 seats, and so
on.
In the $k$-th distribution, the surpluses of the $k-1$ already elected shall
be transfered in the order that these have been elected, and the quota $Q_k$
shall be chosen such that these $k-1$ will reach the quota, and that (after
all transfers), there will be another candidate that reaches exactly $Q_k$,
so this candidate is elected to seat $k$.
\end{remark}

\section{Some variants of \phragmen's and Thiele's methods}\label{Svar}
\subsection{Unordered ballots with two groups}\label{SSline}
 As said in \refS{Sdecap}, a problem with unordered ballots is the risk of
 decapitation, which led to the introduction of the ordered versions of
 \phragmen's and Thiele's methods. 

Another modification to avoid decapitation 
is to let the names on each ballot be separated into two groups 
(instead of a complete ranking), with preference to the first group but no
ordering inside each groups.
(This modification too 
applies to both \phragmen's and Thiele's methods, as well as to some
other similar methods with unordered ballots.)
\begin{metod}{Unordered ballots with two groups}
  Each ballot has two groups of names. 
The orders inside the two groups do not matter. 
A ballot is regarded as a vote for only the names in the first group
as long as at least one of these names remains unelected; when all names
in the first group are elected, the ballot is regarded as a vote for all
remaining names.
(The second group might be empty; equivalently, ballots with a single group are
also accepted.)
\end{metod}
The names in the second group can thus be seen as reserves, that are not used
until required.

This modification  was described 
for \phragmen's method
(and attributed to \phragmen)
by \citet{Cassel} in 1903, 
and
 was proposed for Thiele's method in parliament in 1906 
(by Petersson in P\aa boda) 
\cite[p.~20]{bet1913}.

In practice, the groups might be shown on the ballot by, for example
drawing a line between the two groups of names \cite{Cassel}, 
or by underlining the
names in the first group \cite{bet1913}.

\subsection{Weak ordering}\label{SSweak}

To have two groups on a ballot is an intermediary between unordered and
ordered voting. More generally, a ballot could be allowed to have a
set of candidates with an arbitrary \emph{weak ordering} 
(= \emph{total preorder}), i.e., a
list of groups of candidates, with the different groups in order of preference
but ignoring the order inside each group.
\xfootnote{This version was, as far as I know, not proposed by \phragmen,
  Thiele or any of their contemporaries.
}

\begin{metod}{Weakly ordered ballots}
  Each ballot contains a weakly ordered set of names.
In practice, this could be a list of names 
separated into groups by one or
  several lines (or a single group without a line); 
the orders inside the  groups are ignored.
A ballot is regarded as a vote for only the names  
in the first group where there is someone that is not yet elected.
\end{metod}

Note that this version includes (as extreme cases)
both the unordered version (every ballot
contains only a single group) and the ordered version (every ballot contains
a total ordering), as well as the version with two groups in \refSS{SSline}.
The commission report \cite[p.~20]{bet1913} briefly mentions (and
dismisses as impractical) also the possibility
of having some names on a ballot ranked, and the remaining names coming
after these but unordered among themselves.

Ballots with weakly ordered sets of candidates have been discussed in the
context of STV (\refApp{ASTV}), see e.g.\ 
\citet{Meek2} and \citet{Hill:equality}, and
such ballots have been used in STV elections in some organizations
 \cite{Hill:equality}.
However, the method discussed in
\cite{Meek2} and \cite{Hill:equality}
to handle weak orderings differs from \phragmen's:
they regard a weak ordering as a vote split equally between all total
orderings compatible with the weak ordering.
On the other hand, \phragmen's version in \refSS{SSline}, extended to weak
orderings as above, means that
each ballot is counted as a full vote for each candidate in its first group,
as long as none of them is elected, cf.\ the
  principle in \ref{Pufull}.
For example, a ballot beginning with the group (AB) is, as long as neither A
nor B is elected, regarded as
$\frac12$ vote for A and $\frac12$ for B
by \cite{Meek2} and \cite{Hill:equality}, but as 1 vote for A and 1 vote for
B by \phragmen.
(If A or B is elected, then the value of the ballot is reduced for the
remaining candidate in both systems, by different mechanisms.)

It would be interesting to compare the two ways to handle weakly ordered
list.
It seems that \phragmen's principle can be applied to STV elections too, and
it might have some advantages over splitting the vote between total
orderings
as described above.
(Cf.~\refSS{SSPh1} for the  case of unordered ballots.)

\subsection{\phragmen's method recursively for alliances and factions}
\label{SSrecur}

In Swedish elections 1924--1950, 
ballots could (but did not have to) contain not only a party name but also
an alliance name and a faction name (in addition to the names of the
candidates),
see \refApp{Ahistory}; a ballot could thus
be labelled in up to three levels.
(Only the party name was compulsory.) 
When ballots with alliance and faction names were used, \phragmen's method
was used recursively in up to three steps:
(D'Hondt's method was used to distribute seats between alliances and parties
outside alliances.)

\begin{metod}{\phragmen's method with alliances and factions}
Ballots are ordered, and contain a party name and possibly an optional
alliance name and an optional faction name.

  First, \phragmen's method is applied to each faction separately.
Thus, for each faction name, the method is used to determine an ordering of
the candidates on the ballots with that faction name; call this ordering the
faction list.
In the sequel, all ballots with this faction name are regarded as containing
the faction list instead of their original lists of names.

Secondly, \phragmen's method is applied to each party.
For each party name, the method is used to determine an ordering of
the candidates on the ballots with that party name;
call this ordering the party list.
In the sequel, all ballots with this party name are regarded as containing
this party list instead of their original lists of names (or the faction list).

Thirdly, \phragmen's method is applied to each alliance.
For each alliance name, the method is used to determine an ordering of
the candidates on the ballots with that alliance name:
call this ordering the alliance list.

Seats given to an alliance or a separate party are assigned to candidates
according to the alliance list or party list.
\end{metod}

This means that if no candidate appears on the lists for more than one party
or faction, then (by \refT{TDHondt})
the seats given to an alliance are distributed between the
participating parties according to D'Hondt's method, 
and similarly the seats given to a party are distributed between the
factions according to D'Hondt's method; finally \phragmen's method is
applied for each faction (if there are different ballots within the faction).
(Since D'Hondt's method also was used for the distribution of seats between
alliances and separate parties, this gives a nice consistency.)
However, the method above in an elegant way handles also case where the same
name appears on ballots from different factions, or even different parties.
Moreover, the method above handles cases where some but not all ballots have
a faction name.

\subsection{Party versions}\label{SSMora-party}
\citet{MoraO} discuss a variant of \phragmen's unordered method where each
ballot contains a set of \emph{parties} instead of candidates.
The seats are distributed to the parties one by one as in \phragmen's
method, with the difference that a party can receive more than one seat, and
thus parties that have received a seat 
are not ignored in the sequel.

\begin{metod}{\phragmen's method for parties}
  Each ballot contains a set of parties.
The seats are distributed as in Section \ref{SPhru1} or \ref{SPhru2}, but
parties that have received seats 
continue to participate.
\end{metod}

It is easily seen that this extension of \phragmen's method also can be seen
as a special case of it: if 
each party has a set of (at least) $\MM$ candidates,
with these sets  disjoint,
and we on each ballot
replace each
party by its set of candidates, then an election by
\phragmen's (unordered) method will give the same number of seats to each
party as the party 
version above for the original ballots with parties. (All candidate within
the same party would obviously tie, so the choice of elected within each
party would be uniformly random.) 
Hence the party version is equivalent to assuming that each party has a list
of candidates (with at least $\MM$ names), and that each voter votes for some
union of party lists, \ie, if the voter votes for one candidate from some
party, he or she also votes for all other from the same party.
(We ignore here that in the case of a tie, the probabilities for the
different possible outcomes may be different.)

A party version of Thiele's method can be defined in the same way.

The party version of \phragmen's method
 has interesting and surprising mathematical properties, see \citet{MoraO};
 these will be 
 further studied elsewhere.
(The corresponding party version of Thiele's method
is much better behaved.)

\subsection{\phragmen's first method (Eneström's method) -- 
STV with unordered ballots}
\label{SSPh1}

\citet[pp.~47--50]{Cassel} describes what he calls ``\phragmen's first
method'', which is a version of STV (\refApp{ASTV}), but much resembles
\phragmen's later method described above (which is called ``\phragmen's
second method'' in \cite{Cassel}).
The same method was earlier described by \citet{Enestrom} in 1896.
\xfootnote{\label{fEnestrom}%
The history is murky.
\phragmen{} seems to have invented this method c.~1893;
it is similar to his discussion and examples in the newspaper article
  \cite{Phragmen1893} from 1893, 
which clearly describes the weighted inclusive Gregory method,
but of his two examples in \cite{Phragmen1893}
one uses unordered
  ballots and the other ordered ballots (without comment), and at least one
  uses the Droop quota, so the method was presumably not completely
  developed yet. 
As far as I know, \phragmen{} never published anything about his ``first
method''.
On the other hand, the method is described in detail 
(including examples)
by \citet{Enestrom}
in 1896, together with
\phragmen's and Thiele's methods;
Eneström calls it ``my method'', and does not mention \phragmen{} in
connection with this method.
He had also a few months earlier
briefly described the method in a letter to a newspaper
\cite{Enestrom-AB1896}, as a simpler alternative to \phragmen's method.
\citet[pp.~29--31]{Flodstr} calls it  ``Eneström's method''.
Nevertheless, in 1903, \citet{Cassel} attributes the method to \phragmen,
without mentioning or giving a reference to Eneström.
It seems improbable that this is a mistake by Cassel, since \cite{Cassel} is
an appendix to the commission report \cite{bet1903}, and
\phragmen{} was 
one of the members of the commission.
Moreover, it seems obvious that \phragmen{} and Cassel must have seen
\cite{Enestrom}, which was published in the proceedings of
the Royal Academy of Science in the same volume as \cite{Phragmen1896}.
It is perhaps possible that Cassel regarded \phragmen{} as having invented
the method first and
deliberately ignored \citet{Enestrom}.
Note also that \citet{Enestrom} and \citet{Cassel} illustrate the
method by the same example (\refE{EPhr1} below), earlier
used by \citet{Phragmen1894,Phragmen1895,Phragmen1899} for his method;
\citet{Enestrom} and \citet{Cassel}  
even round the numbers in the calculations in the same way, but they use
different labels for the candidates.

Maybe the method ought to be called \emph{Eneström's method}?
At least \citet{Enestrom} seems to be the first publication of it.

Gustaf Eneström (1852--1923) was a Swedish mathematician.
He did (as both \phragmen{} and Thiele, see \refApp{Abio}) 
work in Actuarial Science, 
but his main contributions are to the history of mathematics,
where he published 
\emph{Bibliotheca mathematica},
initially an appendix to Mittag-Leffler's \emph{Acta Mathematica}, but
1888--1913 his own  independent journal.
(Eneström was assistant  to Mittag-Leffler, helping with
\emph{Acta Mathematica}, but there occured a break between them in 1888, and
Eneström was replaced  by \phragmen{}, who became a coeditor.)
Eneström 
 did not have a university position and worked as a librarian.
I do not know anything about his personal relations with \phragmen,
but they were possibly strained.
Ene\-ström was an outsider in Swedish mathematics and
was disappointed that Mittag-Leffler and other established
mathematicians looked down upon his work on the history of mathematics.
\cite{Domar}, \cite[Gustaf Hjalmar Eneström]{SBL}.
}
Note that the method, unlike all other versions of STV that I know of, older
and newer, uses unordered ballots; it follows the principles
\ref{Pu}--\ref{Pufull} in \refS{Sunordered}.

\begin{metod}{\phragmen's first method (Eneström's method)}
Unordered ballots.
  Each ballot has initially voting power $1$.
Each ballot is counted fully, with its present voting power, for each
unelected candidate on the ballot.
Let $Q$ be the Hare quota (see \refApp{Alist}).
For each seat, the candidate is elected that has the largest sum of voting
powers (from all ballots that contain the candidate's name).
If this total voting power is $v$, and $v>Q$, then each of these ballots has
its voting power multiplied by $(v-Q)/v$. If $v\le Q$, then these ballots
all get voting power $0$ (and are thus ignored in the sequel).
This is repeated until the desired number of candidates are elected.
\end{metod}

Note that the total voting power of all ballots is decreased by $(Q/v)\cdot
v=Q$ each time, as long as someone reaches the quota.

The method is presented in \cite{Cassel} as \phragmen's improvement of
\emph{\andrae's method} (\refApp{ASTV}), resolving two major
problems with the latter:

First, since \andrae's method uses ordered ballots, and only counts the
first unelected candidate on each (except possibly at the end), 
as in \ref{Po}--\ref{Potop} above,
a party that
gets many votes (maybe several times the quota)
but with its candidates in different orders may not get any seat at all.
\phragmen{} thus solves this by using unordered ballots.
(In modern versions of STV, with ordered ballots, this problem is solved by
eliminations, see 
\refApp{ASTV}.)
\xfootnote{There is no point in adding eliminations to \phragmen's version
  with unordered ballots, since every candidate on a ballot gets its full
  remaining   voting power, regardless of whether there are other remaining
  candidates on the ballot or not; hence eliminations would not change the
  sum of voting powers for the remaining candidates.}

Secondly, \andrae's method uses effectively a random selection of the
ballots that are transferred from a successful candidate,
so the outcome may be random. (This is still true for some versions of STV,
see \refApp{ASTV}\ref{STV-Cincinnati}--\ref{STV-Ireland}.)
\phragmen{} resolves this by transferring all ballots but reducing their
voting power proportionally, by what is now known as the \emph{weighted
  inclusive Gregory method}, see \refApp{ASTV}\ref{STV-weighted}, 
which thus was invented by
\phragmen{} (and then forgotten for until reinvented almost a century later).

\phragmen{} seems to have invented this method c.~1893, see \refFn{fEnestrom},
but then  he instead
developed the ideas further 
to the method described in \refS{SPhru}
by eliminating the fixed quota $Q$ and
instead using a variable quota (or, equivalently, as in the description in
\refS{SPhru}, a variable voting power); note also that there are other
modifications, and that the weighted inclusive Gregory method is gone, and
replaced by something similar to the unweighted inclusive Gregory
method in  \refApp{ASTV}\ref{STV-inclusive}, although the mechanism is
different, see \refS{SPhSTV}.

\phragmen{} seems to have returned to new versions of the quota-based method
(with a simplified 
method for reducing the votes) in 1906, according to  some unpublished
notes and drafts,
see \cite[Appendix B.1]{MoraO}.

\begin{example}\label{EPhr1}
\phragmen's first method (Eneström's method)
was illustrated by both \citet{Enestrom} and 
\citet[p.~49]{Cassel} by the
example used by 
\citet{Phragmen1894,Phragmen1895,Phragmen1896} for his method
described in
\refS{SPhru}, and used above in Examples \ref{EPhr1894}, \ref{EPhr1894-Th}, 
\ref{EPhr1894-o}, \ref{EPhr1894-Tho}.
We follow \cite{Enestrom} and \cite{Cassel}, 
and present the calculations using decimal approximations.

Unordered ballots. 3 seats. \phragmen's first method (Eneström's method).
\begin{val}
\item [1034]ABC
\item [519]PQR
\item [90]ABQ
\item [47]APQ
\end{val}
1690 votes. The Hare quota $Q=1690/3\doteq563.3$.

The total numbers of votes for each candidate are
\begin{val}
\item [A]1171
\item [B]1124
\item [C]1034
\item [P]566
\item [Q]656
\item [R]519.
\end{val}

The first seat goes to A, who has the largest number of votes.
Since A has 1171 votes, which exceed the quota $Q$, each ballot containing A
has its voting power (value) reduced from 1 to 
$1-\frac{Q}{1171}=1-\frac{563.3}{1171}\doteq0.519$.

The total voting power of each group of ballots is now 
\begin{val}
\item [ABC] $1034\cdot0.519\doteq536.6$
\item [PQR] 519
\item [ABQ] $90\cdot0.519\doteq46.7$
\item [APQ] $47\cdot0.519\doteq24.4$
\end{val}
(The sum is $\VV-Q=2Q\doteq1126.7$.)

By summing these values for the ballots containing a given candidate,
the voting power that each candidate can collect is:
B 583.3; C 536.6; P 543.4; Q 590.1; R 519.
Since Q has the highest voting power, Q is elected to the second seat.

The voting power 590.1 of Q exceeds the quota Q=563.3, and thus the ballots PQR,
ABQ and APQ get their voting power multiplied by 
$1-\frac{563.3}{590.1}\doteq0.045$.
The total voting power of each group of ballots is now 
\begin{val}
\item [ABC] $536.6$
\item [PQR] $519\cdot0.045\doteq23.5$
\item [ABQ] $46.7\cdot0.045\doteq2.1$
\item [APQ] $24.4\cdot0.045\doteq1.1$
\end{val}
(The sum is $\VV-2Q=Q\doteq563.3$.)

The voting powers available to the remaining candidates are thus:
B 538.7; C 536.6; P 24.6; R 23.5. Thus B is elected to the third seat.

Elected: ABQ.

This is the same result as produced by both \phragmen's and Thiele's
methods, see Examples \ref{EPhr1894} and \ref{EPhr1894-Th}.
\end{example}

See further examples in \cite{Enestrom}.

\subsection{Versions of \phragmen's method based on optimization criteria}
\label{SSopt}
\citet{Phragmen1896} discusses election methods (for unordered ballots) from
the general point of view that each voter should, as far as possible, obtain
the same representation as everyone else.
\xfootnote{It seems likely that this was a kind of response
to \citet{Thiele}, which also tries to derive an election method from
general principles by some \opt{} criterion.
\phragmen{} notes that his classification according to ``inequality''
is different from
from Thiele's based on ``satisfaction'', 
but does not compare the two approaches.
}
More precisely, each elected candidate is counted as one unit, which is
divided between the voters that have voted for that candidate.
This gives each voter a measure of the voter's representation,
and the goal is to keep these as equal as possible.

This ``amount of representation'' is the same as \emph{load}
in \refR{Rload} 
and for convenience we use the latter term here.
(Recall that load also equals  \emph{voting power} in \refS{SPhru}.)
The methods obtained by this approach can thus be described as:
\emph{For each set of $\MM$ candidates, compute the loads for each voter.
Elect the set of candidates that minimizes the ``inequality'' of the load
distribution.} 

In the paper, \phragmen{} discusses several alternatives, as follows.
We use the notation that there are $\VV$ voters, and that the load of voter
$k$ is $\xi_k$; 
thus $\sum_{k=1}^\VV\xi_k=\MM$, the number of elected candidates, and we define
$\bxi:=\sum_{k=1}^\VV\xi_k/\VV = \MM/\VV$, 
the average load per voter; note that $\bxi$ is the same for all sets of
elected (with $\MM$ fixed)
and for all distributions of their loads.
\begin{alphenumerate}[-10pt]
\item 
The total load 1 of each candidate can be:
\begin{enumerate}
\item \label{a1}
divided equally between all voters voting for that candidate,
\item \label{a2}
divided arbitrarily between the voters voting for that candidate
(in a way that minimizes the final inequality).
\end{enumerate}
\item 
The ``measure of inequality'' (that is to be minimized)
for a set $\set{\xi_k}$ of loads can be taken as:
\begin{enumerate}
\item \label{b1}
The sum of squares $\sum_{k,l}(\xi_k-\xi_l)^2$.
(This choice is thus an instance of the general 
method of \emph{least squares}.)
Equivalently, as is well-known (and noted by \citet{Phragmen1896}),
we can minimize the variance
$\VV\qw\sum_{k}(\xi_k-\bxi)^2$
(or just $\sum_{k}(\xi_k-\bxi)^2$),
or simply (because $\bxi$ is fixed) 
the sum of squares $\sum_k \xi_k^2$.

\item \label{b2}
The maximum difference $\max_k(\xi_k-\bxi)$.
This is equivalent to minimizing 
the maximum load  $\max_k\xi_k$.
\end{enumerate}
\item 
Furthermore, \phragmen{} discussed two versions for the optimization 
(the same as two of the three versions discussed by \citet{Thiele}, see
\refS{Sthiu}): 
\begin{enumerate}
\item \label{c1}
The set of $\MM$ candidates that minimizes the inequality is elected.
We call these methods \emph{\opt{}} methods.
\item \label{c2}
The candidates are elected sequentially.
In each round, the existing loads for the previously elected are kept fixed
and for each remaining candidate, the loads for that candidate, if elected,
are added to the previous loads. The candidate that minimizes the
inequality of the resulting set of loads is elected.
(This is thus  a greedy version of the \opt{} method.)
We call these methods \emph{sequential}.
\end{enumerate}
\end{alphenumerate}

This gives $2\cdot2\cdot2=8$ possible combinations
(we shall see below that they all yield different methods), but
\citeauthor{Phragmen1896} does not discuss all of them.

\phragmen{} first says that it is natural to divide the load of a candidate
equally 
between the voters (\ie, \eqref{a1} above), 
but that this cannot be regarded as the definitive answer; he gives the
following example.

\begin{example}
\label{EABAC}
 From \citet{Phragmen1896}.

Unordered ballots. 2 seats.
  \begin{val}
\item [100]AB
\item [100]AC
\end{val}
An \opt{} method \eqref{c1} is assumed.
BC will give a uniform distribution of the
load, while with \eqref{a1}, this is impossible if A is elected. Thus the
outcome is BC (for any inequality measure), in spite of the fact that 
everyone has voted for A.

We may also note that \eqref{a2} would enable all three choices AB, AC and BC to
have uniform load distributions, so although this would enable A to be
elected, the result would be a tie which hardly is satisfactory in this
case,
see also \refE{EABAC+} below.
Finally, note that any sequential method \eqref{c2} would elect A first.
\end{example}

\phragmen{} nevertheless continues to study \eqref{a1}, and says that it is
natural to measure the inequality of the loads by the sum of the squared
differences, \ie{} \eqref{b1} above. He shows that this is equivalent to
minimizing the sum $\sum_k\xi_k^2$, and that if there are $v_i$ votes for
candidate $i$, of which $v_{ij}$ also are for $j$, then, 
if we elect a set $\cS$,
\begin{equation}\label{a1b1}
  \sum_k\xi_k^2 =\sum_{i\in \cS}\frac{1}{v_i}
+\sum_{i,j\in \cS,\; i< j}\frac{2v_{ij}}{v_iv_j}.
\end{equation}

\phragmen{} then continues to study the sequential version
\eqref{a1}\eqref{b1}\eqref{c2}, which avoids the problem in \refE{EABAC},
but shows by another example 
that this too suffers from undesirable non-monotonicity, see
\refE{EPhr1896b} below.

We may note (although \phragmen{} did not do so) that the election method 
\eqref{a1}\eqref{b1}\eqref{c2} can be given an algorithmic description
similar to 
the one in \refSS{SPhru2}.
The proof follows from \eqref{a1b1} by noting that the increase of the sum
in \eqref{a1b1} if candidate $i$ is added the elected set $\cS$ equals $1/W_i$
in \eqref{jfra1b1}.

\begin{metod}{Sequential version with equipartitioned loads and least
	squares criterion}
Seats are given to candidates sequentially.
Let $v_i$ be the number of votes for candidate $i$.
If a set $\cS$ of candidates already has been elected, 
then each ballot
with a set $\gs$ of candidates is given a place number
$q_\gs':=\sum_{i\in \gs\cap \cS} 1/v_i$, \ie{} each elected candidate $i$
contributes 
$1/v_i$ to the place number on each ballot containing $i$.
Let further $v_\gs$ be the number of ballots with the set $\gs$, and let
$q_\gs:=v_\gs q_\gs'$, their total place number.
The \jfr{} for candidate $i$ is then defined as
\begin{equation}\label{jfra1b1}
  W_i:=\frac{v_i}{1+2\sum_{\gs\ni i} q_\gs}
=\frac{\sum_{\gs\ni i} v_\gs}{1+2\sum_{\gs\ni i} q_\gs}.
\end{equation}
The candidate with the largest \jfr{} is elected.  
\end{metod}

As said above, 
\eqref{a1}\eqref{b1}\eqref{c2} suffers from non-monotonicity, 
as shown by the following example.
(See \refE{E-monoTh} for a similar example for Thiele's method.)

\begin{example}
\label{EPhr1896b}
This example is in principle from \citet{Phragmen1896},
but \phragmen's numerical example is incorrect; the following corrected
version is due to Xavier Mora (personal communication).

Unordered ballots. 2 seats.
\begin{val}
\item [1145] A
\item [885] B
\item [900]C
\item [55]AB
\item [100]AC
\end{val}
Using \eqref{a1}\eqref{b1}\eqref{c2}, 
the first elected is A (who has most votes), and
a calculation, \eg{}  using \eqref{jfra1b1}, 
shows that the second place is tied between B and C.
However, if some of the voters for A change their mind and add B,  
so that the numbers of votes instead are
\begin{val}[20pt]
\item [$1145-x$] A
\item [885] B
\item [900]C
\item [$55+x$]AB
\item [100]AC
\end{val}
for some $x>0$ (with $x<360$ to ensure that A still is elected first), then
C will be elected; conversely, with a change in the opposite direction
($x<0$), B is elected. The election of B can thus be prevented by more
voters voting  for B!
\end{example}

\phragmen{} draws the conclusion that \eqref{a1} has to be abandoned, and
replaced by \eqref{a2}, which does not have this kind of non-monotonicity
as shown by the following result.
(See also \refS{Smono2} and in particular the overlapping
\refT{Tmono-u}.)

\begin{theorem}[\citet{Phragmen1896}]\label{TPh1896}
Every  election method based on \eqref{a2},
for any measure of inequality, satisfies the following:
Consider an election and let A be one of the candidates.
Suppose that the votes are changed such that A gets more votes, either
from new voters that vote only for A, or from 
voters that add A to their ballots (thus changing the vote from some set
$\gs$ to $\gs\cup\set A$), but no other changes are made
(thus all other candidates receive exactly the same votes as before).
Then this cannot hurt A; if A would have been elected before the change,
then A will be elected also after the change.
\end{theorem}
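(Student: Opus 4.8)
The plan is to use the \opt{} structure of the \eqref{a2}-methods directly. For each set $\cS$ of $\MM$ candidates let $I(\cS)$ denote the smallest value of the chosen inequality measure over all admissible load distributions $\set{\xi_k}$, where ``admissible'' means that each elected candidate's unit of load is split, in some nonnegative way, among the voters who voted for that candidate (this is exactly the freedom granted by \eqref{a2}). The method elects a set minimizing $I$, and A is elected precisely when some minimizer contains A; write $I_{\mathrm{old}}$ and $I_{\mathrm{new}}$ for the minimal inequality before and after the change.

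Two monotonicity facts drive the argument, and I would first establish them in the case where A is merely added to existing ballots, so that the voter set is unchanged. If $\rmA\in\cS$, the change can only enlarge the admissible region for $\cS$, since a voter who has just gained A on her ballot becomes eligible to carry part of A's unit load while every old distribution is retained; hence $I_{\mathrm{new}}(\cS)\le I_{\mathrm{old}}(\cS)$. If $\rmA\notin\cT$, then A is not elected and its appearance on a ballot is irrelevant to the loads, so the admissible set for $\cT$ is literally unchanged and $I_{\mathrm{new}}(\cT)=I_{\mathrm{old}}(\cT)$.

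Granting these, the theorem follows by a short chain. Assuming A was elected, fix an old minimizer $\cS_0$ with $\rmA\in\cS_0$; then for every $\cT$ with $\rmA\notin\cT$,
\begin{equation*}
  I_{\mathrm{new}}(\cS_0)\le I_{\mathrm{old}}(\cS_0)\le I_{\mathrm{old}}(\cT)\le I_{\mathrm{new}}(\cT),
\end{equation*}
so $\min_{\cS\ni\rmA}I_{\mathrm{new}}(\cS)\le\min_{\cT\not\ni\rmA}I_{\mathrm{new}}(\cT)$ and some minimizer of $I_{\mathrm{new}}$ still contains A. A strict version of the same chain yields the positive-responsiveness strengthening noted in the footnote.

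The genuine obstacle is the case of a brand-new voter who votes only for A, for then the voter total $\VV$ grows and the two facts concern inequality measures on lists of different lengths. I would dispose of this with the natural requirement that appending a voter of load $0$ leaves the inequality unchanged. Under it, for $\rmA\in\cS$ the old optimal distribution with the new voter assigned load $0$ is admissible and has the same value, so $I_{\mathrm{new}}(\cS)\le I_{\mathrm{old}}(\cS)$ again; and for $\rmA\notin\cT$ the new voter is forced to load $0$, whence $I_{\mathrm{new}}(\cT)=I_{\mathrm{old}}(\cT)$. This append-a-null-voter invariance is immediate for the sum of squares $\sum_k\xi_k^2$ of \eqref{b1} and for the maximal load $\max_k\xi_k$ of \eqref{b2}, and (together with anonymity, under which only the multiset of loads matters) it is precisely the sense in which the statement holds ``for any measure of inequality.''
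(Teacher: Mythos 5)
Your argument is correct as far as it goes, but it only proves the theorem for the \opt{} methods \eqref{c1}, and so covers only half of the methods the statement quantifies over. ``Every election method based on \eqref{a2}'' includes, in the paper's taxonomy, the \emph{sequential} methods \eqref{c2} as well -- and these are the essential case, since Phragmén's own method is the combination \eqref{a2}\eqref{b2}\eqref{c2} and the theorem is precisely what justifies it. Your framing ``the method elects a set minimizing $I$, and A is elected precisely when some minimizer contains A'' is false for sequential methods: they build the elected set greedily, and the resulting set need not minimize $I$ at all (see \refE{EABAC}, where every \opt{} method elects BC while every sequential method elects A first). Consequently your chain of inequalities, while valid, says nothing about what a sequential method does.

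The missing piece is a round-by-round induction, which is how the paper completes the proof. Your two monotonicity facts remain the engine: under \eqref{a2}, adding $\rmA$ to ballots (or adding new $\rmA$-only voters) does not change which voters may carry load for any candidate $i\neq\rmA$, so in any given round, with the same history of elected candidates and fixed loads, every rival $i\neq\rmA$ yields exactly the same minimal inequality as before the change, while $\rmA$'s can only improve. Hence, by induction on rounds up to the round in which $\rmA$ was originally elected: after the change, either $\rmA$ has already been elected in an earlier round (done), or each earlier round elects the same candidate, with the same optimal load distribution, as before; in the latter case, at $\rmA$'s original round one has $I_{\mathrm{new}}(\rmA)\le I_{\mathrm{old}}(\rmA)\le I_{\mathrm{old}}(i)=I_{\mathrm{new}}(i)$ for every remaining rival $i$, so $\rmA$ wins that round. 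Your handling of the new-voter case via null-voter invariance of the inequality measure is a genuine refinement -- the paper passes over this point silently, and the observation carries over verbatim to the sequential induction -- but without that induction the proof is incomplete.
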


\begin{proof}
  Consider first an \opt{} method.
Each set of candidates not containing $\rmA$ has the same possible
distributions of loads, and thus the same minimum inequality as before
the change.
On the other hand, a set containing $\rmA$ still has all possible
distributions of loads that existed before the change, and possibly some
new ones; hence 
the minimum inequality for such a set is at most the same as before.
If $\rmA$ would be elected before the change, then a set of the latter type had
smaller (or possibly equal) minimum inequality than every set of the former
type, and then the same holds after the change, so $\rmA$ will still be
elected.

Consider now a sequential method.
\xfootnote{\phragmen{} did not mention this case explicitly.}
Consider the round when $\rmA$ would have been elected, if there was no change.
After the change, either $\rmA$ already has been elected before this round, and
we are done,  
or the preceding rounds have elected the same candidates as before the
change,
and then $\rmA$ will be elected in this round, by the same argument as in the
\opt{} case.
\end{proof}

Having thus argued for \eqref{a2}, \phragmen{} says that the election method
is determined by the measure of inequality.
\xfootnote{
 \phragmen{} did not explicitly discuss the difference between
\opt{} and sequential methods, or the computational problems with the former.
}
He notes that this measure can be chosen in different ways,
and continues that in order to obtain as simple calculations as possible,
it is recommended to use 
the difference between maximum and average load \eqref{b2} and 
the sequential version \eqref{c2}.

\phragmen{} thus has finally come to the election method
\eqref{a2}\eqref{b2}\eqref{c2}, and notes that this is the method that he
earlier has proposed in \cite{Phragmen1894,Phragmen1895}, which is
described in \refS{SPhru}; it is easily seen that the optimal distribution
of loads is the same as the voting powers assigned in the formulation in
\refSS{SPhru1}.
\xfootnote{
This was obviously the intention of the paper \cite{Phragmen1896}.
It seems that one of the main objections to \phragmen's method
(from its proposal in 1894 to the adoption of the ordered version 27 years
later) 
was that, regardless of whether it had mathematical advantages or not,
it was too complicated to be understood and to be used in practice.
\phragmen{} tried varying formulations and motivations in  different
papers, and  in \cite{Phragmen1896} he thus tries to present the method as 
simple by giving even more complicated alternatives.
(He does not explicitly say that the method is the simplest possible 
among the acceptable alternatives, but he possibly wanted to give that
impression.)
However, in retrospect, 
this attempt to present the method as simple  does not
seem succesful, since
\phragmen{} later did not use this argument again.
}

Although \phragmen{} thus favoured the method
\eqref{a2}\eqref{b2}\eqref{c2}, one might consider also other combinations
than the ones he studied. 
In particular, the other three versions with \eqref{a2}
also seem interesting, at least from a
mathematical point of view;
they are 
	\eqref{a2}\eqref{b2}\eqref{c1},
	\eqref{a2}\eqref{b1}\eqref{c1} 
and 	\eqref{a2}\eqref{b1}\eqref{c2} above:

\begin{metod}{\Opt{} version of \phragmen's method}
  Elect the set of $\MM$ candidates such that, with loads distributed
  optimally,
the maximum load is minimal.
\end{metod}

\begin{metod}{\Opt{} least squares version of \phragmen's method}
  Elect the set of $\MM$ candidates such that, with loads distributed
  optimally,
the sum of the squares of the loads is minimal.
\end{metod}

\begin{metod}{Sequential least squares version of \phragmen's method}
Seats are given to candidates sequentially.
When a candidate is elected, each ballot with this candidate is given a load
for that candidate, in addition to any load that might exist from previously
elected candidates; the additional loads are chosen such that their sum is\/ $1$
and the sum of the squares of the total loads of the ballots is minimal. 
In each round, the candidate is elected such that the resulting sum of
squares of loads after electing the candidate is smallest.  
\end{metod}

The two \opt{} methods are studied 
by \citet[Section 8]{MoraO} (together with their party versions as in
\refSS{SSMora-party}) 
and
in \cite{BrillEtAl}
(under the names \emph{max-\phragmen} and \emph{var-\phragmen}).
The sequential least squares version is studied by Mora \cite{Mora-var}.

The \opt{} methods, although optimal in the mathematical sense of
optimizing some function, do not always
yield results that seem optimal from other points of view. 
This was seen in \refE{EABAC}, and we
can modify the example to make it more striking.

\begin{example}\label{EABAC+}
\quad

Unordered ballots. 2 seats.
  \begin{val}
\item [100]AB
\item [100]AC
\item [1]B
\item [1]C
\end{val}
For any \opt{} method \eqref{c1},
using either \eqref{a1} or \eqref{a2}, and either \eqref{b1} or \eqref{b2}
or any other method of inequality, BC will be elected, since this gives a
perfectly uniform load distribution, while any other combination leaves  
one voter with load 0.

Since every voter but two votes for A, this result seems questionable.
Indeed, any sequential method (of the types considered here) begins by
electing the candidate with most votes, so A is elected (and the second seat
is a tie between B and C).

This example also shows that the \opt{} methods \eqref{c1} considered in this
subsection differ from the sequential ones \eqref{c2}. 

Cf.\ the similar \refE{ETh12}, showing the difference between the \opt{}
and the sequential versions of Thiele's method. That example would work here
too, but not conversely. (In this type of example, with the four ballot types
above and symmetric in B and C, Thiele's \opt{} and addition
 methods yield the same
result unless at least a third (and at most half)
of the votes are on B or C only, while for the methods
studied here, the fraction of such votes can be arbitrarily small.)
\end{example}

\citet{Mora-var} found a  technical problem with 
the sequential least squares method \eqref{a2}\eqref{b1}\eqref{c2},  
shown in the following example.

\begin{example}[\citet{Mora-var}]
The sequential least squares method \eqref{a2}\eqref{b1}\eqref{c2}.  

Unordered ballots. 3 seats.
\begin{val}
\item [9]AB
\item [1]ABC
\item [3]CD
\end{val}
The first seat goes to A, and the loads on the three types of ballots are
$(0.1,0.1,0)$.
The next seat goes to C, and then the loads are
$(0.1, 0.275, 0.275)$.
The third seat goes to B. However, if we would distribute the total loads of
the ballots containing B (including one unit for the election of B)
uniformly, we would get the load distribution
$(0.2175, 0.2175, 0.2750)$, where the load  on the ballot has decreased.
This is not allowed by the formulation above, and instead we have to
distribute
the load of B on the 9 ballots AB only, giving the loads
$(0.2111,0.275,0.275)$.
So either we have to accept this behaviour, which complicates the
implementation and reduces the usual advantage of least squares methods,
or the method has to be modified by allowing the load of a ballot to
decrease.
\xfootnote{
An analogous phenomenon occurs in the version of STV that uses the inclusive
Gregory method
(\refApp{ASTV}\ref{STV-inclusive}), where the voting value of
a ballot can increase when the surplus is transferred,
see \cite{Farrell:1983}.
}
\end{example}

Consider now the party list case (see \refS{Sparty}).
In this case, there is no
difference between \eqref{a1} and \eqref{a2}, since an equidistribution of
loads is optimal; if a party has $v$ votes and  $m$ candidates
elected, then each of its $v$ ballots thus has load $m/v$.
The problem of optimizing these quotients for party lists
by the method of least squares was
considered in 1910 (thus  after \phragmen's paper, but presumably
independent of it)
by \citet{StL}, who showed that this leads to the method now named after
him,
see \refApp{AStL}. In particular, since \StL's method is sequential, 
the optimization problem can in this case be solved sequentially (greedily).

\citet{StL} also more briefly,  again for party lists, 
considered minimizing the maximum load, \ie, \eqref{b2} above, and showed
that this leads to D'Hondt's method (\refApp{ADHondt});
this had also earlier been shown by
\citet{Rouyer} and \citet{Equer}, see \citet{Mora:Jefferson}.
\xfootnote{As said in \refS{Sparty}, \phragmen{} 
showed this for his sequential method \eqref{c2}; it is easy to see that in
the party list case, there is no difference between the \opt{} and
sequential versions, so both yield D'Hondt's method, but I do not know
whether \phragmen{} did observe this; 
he did not consider the \opt{} version at all except
  implicitly   in \cite{Phragmen1896} as discussed above.
}

We thus can conclude the following,
\cf{} \refT{TDHondt}.
\begin{theorem}\label{Tchu}
  In the party list case, the following holds:
  \begin{romenumerate}
  \item 
The least squares methods using \eqref{b1}, with either \eqref{a1} or
\eqref{a2} and either \optal{} \eqref{c1} or sequential \eqref{c2},
all yield the same result as Sainte-Laguë's method.
  \item 
The maximum load methods using \eqref{b2}, with either \eqref{a1} or
\eqref{a2} and either \optal{} \eqref{c1} or sequential \eqref{c2},
all yield the same result as D'Hondt's method.
  \end{romenumerate}
\end{theorem}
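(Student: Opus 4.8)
The plan is to reduce both parts to a single finite optimization over seat-distributions and then recognise the two divisor methods. Index the parties by $P$, with $v_P$ votes and $m_P\ge0$ seats, so that $\sum_P m_P=\MM$. As noted just before the theorem, in the party list case an equidistribution of each candidate's load is optimal, so \eqref{a1} and \eqref{a2} produce the same loads; each of the $v_P$ ballots for party $P$ then carries load $m_P/v_P$. Hence the two inequality measures become, respectively,
\[
\sum_k\xi_k^2=\sum_P m_P^2/v_P
\qquad\text{and}\qquad
\max_k\xi_k=\max_P m_P/v_P ,
\]
the first also being a special case of \eqref{a1b1}. It then remains only to compare the minimisers of these two functions, taken over all integer vectors $(m_P)$ with $\sum_P m_P=\MM$, with the divisor methods of \refApp{Alist}.

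First I would treat the sequential versions \eqref{c2}, which are the most direct. For the least squares measure, assigning the $(m_P{+}1)$-st seat to $P$ increases $\sum_Q m_Q^2/v_Q$ by $((m_P+1)^2-m_P^2)/v_P=(2m_P+1)/v_P$; minimising this increase means maximising $v_P/(2m_P+1)$, which is exactly the choice made by Sainte-Laguë's method (divisors $1,3,5,\dots$), \refApp{AStL}. For the maximum load measure, with the earlier loads kept fixed, assigning a seat to $P$ makes the new maximum $\max\{c,(m_P+1)/v_P\}$, where $c$ is the current maximum; since $x\mapsto\max\{c,x\}$ is nondecreasing, this is minimised by the party maximising $v_P/(m_P+1)$, exactly D'Hondt's rule (divisors $1,2,3,\dots$), \refApp{ADHondt}. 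Thus the sequential methods are Sainte-Laguë's and D'Hondt's, respectively.

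It then remains to show that the optimal versions \eqref{c1} coincide with the sequential ones. For the least squares case this is the standard fact that a separable sum $\sum_P g_P(m_P)$ with each $g_P$ discretely convex (here $g_P(m)=m^2/v_P$, whose successive differences $(2m+1)/v_P$ increase in $m$) is minimised, subject to $\sum_P m_P=\MM$, by greedily assigning each unit to the smallest available marginal difference; this greedy assignment is precisely the sequential method, so optimal $=$ sequential $=$ Sainte-Laguë. For the maximum load case I would argue directly by an exchange argument that the D'Hondt allocation $(m_P^*)$ minimises $\max_P m_P/v_P$: if $(m_P)$ is any other admissible vector and $C$ attains the D'Hondt maximum $m_C^*/v_C$ (so $m_C^*\ge1$), then either $m_C\ge m_C^*$, giving $\max_P m_P/v_P\ge m_C^*/v_C$ at once, or some party $A$ has $m_A\ge m_A^*+1$, and the defining property of D'Hondt---that the losing quotient $v_A/(m_A^*+1)$ is at most the winning quotient $v_C/m_C^*$---yields $m_A/v_A\ge(m_A^*+1)/v_A\ge m_C^*/v_C$. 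Either way the maximum is no smaller than for D'Hondt, so D'Hondt is optimal. Combined with the sequential computation this gives part \ref{Tchu}\emph{(ii)}, and the least squares argument gives part \emph{(i)}.

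The main obstacle is the optimal maximum load case: minimising a maximum is not separable, so the ``greedy $=$ optimal'' principle used for least squares does not apply verbatim, and one must instead run the exchange argument above, comparing a single over-allocated party $A$ with the distinguished maximal party $C$ and invoking the ``last winning quotient $\ge$ first losing quotient'' characterisation of D'Hondt. (Throughout, uniqueness of the minimiser---and hence that all four combinations give literally the same allocation rather than merely the same optimal value---requires the generic absence of ties, i.e.\ that the relevant quotients are distinct; ties are resolved consistently, as elsewhere in the paper.)
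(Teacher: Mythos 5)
Your argument is correct in substance, and it is genuinely more self-contained than the paper's. The paper gives no computation at all: Theorem~\ref{Tchu} is stated as the conclusion of the preceding paragraph, which makes the same reduction you do (in the party list case \eqref{a1} and \eqref{a2} coincide by equidistribution, each ballot of party $P$ carrying load $m_P/v_P$, so the two measures become $\sum_P m_P^2/v_P$ and $\max_P m_P/v_P$) and then simply cites \citet{StL} for the least-squares result (together with the remark that, his method being sequential, the optimization can be solved greedily), and \citet{StL}, \citet{Rouyer} and \citet{Equer} for the maximum-load result, with a footnote asserting that it is ``easy to see'' that the \opt{} and sequential versions agree in this case. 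What you supply is precisely the content of those citations: the marginal-increase computations identifying the sequential rules with Sainte-Lagu\"e's and D'Hondt's divisor rules, the greedy-equals-optimal principle for separable discretely convex objectives, and---most usefully---the exchange argument for the non-separable min-max problem, which is exactly the step the paper's footnote waves off.

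One step needs tightening. In the sequential maximum-load case you argue that, the new maximum being $\max\{c,(m_P+1)/v_P\}$, the method elects the party with the largest $v_P/(m_P+1)$. That party indeed attains the minimum, but if some quotient $(m_P+1)/v_P$ were \emph{below} the current maximum $c$, then several parties would tie at the value $c$ even when all quotients are distinct, so your genericity parenthetical alone does not force the D'Hondt choice. The repair is an induction for which you already have the tools: the allocation built so far is a D'Hondt allocation, and for a D'Hondt allocation every unawarded quotient $v_P/(m_P+1)$ is (strictly, under genericity) smaller than every awarded quotient $v_Q/m_Q$; in particular $(m_P+1)/v_P>c$ where $c=\max_Q m_Q/v_Q$. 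Hence the measure comparison reduces to the quotient comparison, the minimizer is unique, and the next allocation is again D'Hondt. This is the same D'Hondt inequality you invoke in the exchange argument, so the gap closes in two lines.
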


\begin{example}\label{Echu}
\quad

Unordered ballots. 2 seats.
  \begin{val}
  \item [5]AB
  \item [2]CD
  \end{val}
This is a party list case, and by \refT{Tchu}, every least squares method
\eqref{b1}
(of the types considered above) will elect (e.g.)\ AC,
while every maximum load method \eqref{b2} will elect AB.

This example shows that the least squares methods differ from the maximum
load methods.
\end{example}

\begin{remark}
  \citet{StL} also considered (for party lists)
maximizing the minimum load
(we might call this (b3)); this
  yields \emph{Adams' method} (see \cite{BY}), which is equivalent to giving
  every party 1 seat and distributing the rest by D'Hondt's method.
In the context of general unordered ballots (instead of party lists), 
in typical cases there will always be some ballots that do not vote for any
of the elected candidates, so their load is 0 and the minimum load is 0,
which seems to make (b3) less interesting.

\citet{StL} also considered optimizing the distribution of votes per elected
candidate, again using the least squares method, showing that (at least
assuming that each party gets at least
one seat), this yields a method that is now known as 
\emph{Huntington's method} (proposed by Huntington in 1921),
which since 1941 is used for the allocation of seats in the US House of
Representatives among the states, see \cite{BY}.
This version of optimization could perhaps be extended to general
unordered ballots, using a dual of \eqref{a2} where
each voter has $1$ vote which is divided between the names on the ballots in
an arbitrary way, and the goal is to make the resulting total votes on the
elected candidates high and close to equal. However, it is not clear how to
treat candidates that are not elected, and how to define the quantity that
should be optimized.
\end{remark}

Examples \ref{EABAC+} and \ref{Echu} thus show that of the 8 election
methods obtained by combining the alternatives above, the only ones that
possibly could be equal (in the sense of always giving the same outcome)
would be two that differ only in using \eqref{a1} or \eqref{a2}. 
For the combination \eqref{b1}\eqref{c2} (sequential least squares),
\refE{EPhr1896b} and \refT{TPh1896} show that the two methods differ.
For completeness, we give another example to show that also in the other
cases, \eqref{a1} and \eqref{a2} give different methods, and thus all 8
methods are different.

\begin{example}
\quad

Unordered ballots. 2 seats.
  \begin{val}
  \item [1]AB
  \item [1]A
  \item [1]B
  \item [$c$]C
  \end{val}
Here $c$ is a  positive rational number with $c<2$; 
we can obtain  integer numbers of votes by multiplying all numbers by
the denominator of $c$.

By symmetry, there will be ties; we may suppose that these are resolved
lexicographically, with A before B.
In the sequential versions \eqref{c2}, then A will be elected first (because
$c<2$); thus for the second seat we consider AB and AC.
In the \opt{} versions \eqref{c1}, by symmetry we also consider AB and AC.

For AC, the loads on the four types of ballots will in all cases (\eqref{a1}
or \eqref{a2}, \eqref{b1} or \eqref{b2}, \eqref{c1} or \eqref{c2}) be 
$\bigpar{\frac12,\frac12,0,\frac1c}$, with maximum $\frac{1}c$ and sum of
squares 
(recalling that we have $c$ ballots of the last type) $\frac12+\frac1c$.

For AB, we obtain 
using \eqref{a1}, in both the \opt{} and sequential versions,
the loads $\bigpar{1,\frac12,\frac12,0}$ with maximum 1 and sum of squares
$\frac{3}2$; thus, for all four methods using \eqref{a1}, C will get a seat if
$c>1$.

Using \eqref{a2}, we instead find that in the \opt{} version \eqref{c1},
the loads for AB will be $\bigpar{\frac23,\frac23,\frac23,0}$, with
maximum $\frac23$ and sum of squares $\frac43$.
Hence, 
C wins a seat if $c>\frac32$ 
for method \eqref{a2}\eqref{b2}\eqref{c1}, 
and if $c>\frac65$ 
for method \eqref{a2}\eqref{b1}\eqref{c1}.

Using \eqref{a2} and the sequential version \eqref{c2},
the loads for AB will be $\bigpar{\frac34,\frac12,\frac34,0}$, with
maximum $\frac34$ and sum of squares $\frac{11}8$.
Hence, 
C wins a seat if $c>\frac43$ 
for method \eqref{a2}\eqref{b2}\eqref{c2}, 
and if $c>\frac87$ 
for method \eqref{a2}\eqref{b1}\eqref{c2}.

Together with the examples above, this shows that all 8 combinations 
discussed above yield different methods.
\end{example}

\subsection{Recent STV-like versions of \phragmen's method}
Olli Salmi \cite{Salmi,Salmi2} has proposed a modification of \phragmen's
method to something similar to STV by introducing the Droop quota as a
criterium for election and eliminations when no-one reaches the quota.
This has been further developed by  \citet{Woodall:QPQ} and, in several
versions,  \citet{Hyman}.

However, it seems doubtful whether it is possible to modify \phragmen's
method in some way without losing 
some of its advantages.

  \section{Some conclusions}\label{Sconclusions}
As said in the introduction, our purpose of is not to advocate any
particular method. 
Nevertheless, we draw some conclusions for practical applications.

It seems that 
Thiele's unordered (addition) and ordered methods
both have serious problems in some situations,
shown by several of the examples in \refS{Sex} and by further results in
\refS{Sprop}, and that these methods therefore are not satisfactory in
general.
(This was also the conclusion of  \citet{Cassel},
\citet{Tenow1912} and the commission reports
\cite{bet1913} and
\cite[pp.~213--220]{bet1921}.)
On the other hand, these methods are simple and may be useful in some
situations, and Thiele's ordered method has been used for a long time inside
local councils in Sweden, see \refApp{Awithin}, as far as I know without any
problems. 

Thiele's \opt{} method is computationally difficult, but might be used in
some (small) situations. It is perhaps not sufficiently investigated, but
note the property in \refT{Tth-EJR}.
On the other hand, this method is likely to have at least some of the same
problems with tactical voting as Thiele's other methods.

\phragmen's unordered and ordered methods seem quite robust in many
situations, and they have good proportionality properties (see
\refS{Sprop}),
but they have the disadvantage of 
leading to rather complicated calculations that only in simple cases can be
made by hand. 

\phragmen's unordered method does not ignore full ballots, but that is
perhaps more a curiosity than a real problem in practice.

Note also the general problems with unordered methods discussed in
\refS{Sdecap}. Nevertheless, unordered ballots seem to work well in
practice in many situations without organized parties, for example in
elections in non-political associations and organizations.

\begin{ack}
I thank Markus Brill, Rupert Freeman, Martin Lackner and Xavier Mora
for helpful comments,
Xavier Mora also for help with references, 
and Steffen Lauritzen for help with Thiele's biography.

This work was partly carried out in spare time during a visit to the 
Isaac Newton Institute for Mathematical Sciences
(EPSCR Grant Number EP/K032208/1), 
partially supported by a grant from the Simons foundation, and
a grant from
the Knut and Alice Wallenberg Foundation.
  \end{ack}

\appendix

\section{Biographies}\label{Abio}
\subsection{Edvard Phragmén}\label{Abio-Phr}
Lars Edvard Phragmén (1863--1937)
was a Swe\-dish mathematician, actuary and insurance executive.
He was born in Örebro 2 October 1863,
and died in Djursholm (outside Stockholm) 13 March 1937.

Edvard Phragmén began his university studies in Uppsala in 1882, but
transferred in 1883 to Stockholm, where Gösta Mittag-Leffler had become
professor in 1881.
\phragmen{} obtained a  Licentiate degree in 1889, and was awarded a Ph.D.\
h.c.\ by 
Uppsala University in 1907.

In 1888, Edvard Phragmén was appointed coeditor 
of
Mittag-Leffler's journal \emph{Acta Mathematica}, where he immediately made
an important contribution by finding an error in a paper by Henri Poincaré on the
three-body problem. The paper had been awarded a prize in a competition that
Mittag-Leffler had persuaded King Oscar II to arrange, but Phragmén found a
serious mistake when the journal already had been printed; the copies that had
been released were recalled and a new corrected version was printed.
\phragmen{} continued to be an editor of Acta Mathematica until his death in
1937. 

In 1892, Edvard Phragmén became professor of Mathematics at Stockholm
University
(at that time Stockholm University College).
In 1897, he became also actuary in 
the private insurance company Allmänna {Lif}\-försäk\-rings\-bolaget.
His  interest in Actuarial Science and insurance companies seems to have grown,
and in 1904 he left his professorship to become the first 
head of the Swedish  Insurance Supervisory Authority. 
He left that post in 1908 to become 
director of 
Allmänna Lif{}försäkringsbolaget, a
post that he held until 1933.

Edvard Phragmén
became a member of the
Royal Swedish Academy of Sciences in 1901.
He was President of the Swedish Society of Actuaries 1909--1934.

His best known mathematical work is the Phragmén-Lindelöf principle in complex
analysis, a joint work with the Finnish mathematician Ernst Lindelöf which
was published in 1908.

His interest in election methods is witnessed by his publications
\cite{Phragmen1893,Phragmen1894, Phragmen1895, Phragmen1896, Phragmen1899}.
Moreover,
he was a member of the Royal Commission on a Proportional Election Method
1902--1903 \cite{bet1903}, 
and of a new Royal Commission on the Proportional Election Method
1912--1913 \cite{bet1913}.
He was also a local politician and
chairman of the city council of Djursholm 1907--1918.

See further \cite[L  Edward Phragmén]{SBL}, \cite{Domar} and 
\cite[Lars Edvard Phragmén]{MT}.





\subsection{Thorvald Thiele}
Thorvald Nicolai Thiele (1838--1910) was a Danish astronomer and
mathematician.
He was born in Copenhagen 24 December 1838, and died there 26 September 1910.

Thorvald Thiele 
studied Astronomy at the University of Copenhagen, where he obtained his
Master's 
degree in 1860 and his Ph.D. 
in 1866, with a thesis on a double star. He was
Professor of Astronomy at the University of Copenhagen
from  1875 until his retirement in 1907.
He became a
member of the Royal Danish Academy of Sciences and Letters in 1879.

Thiele had an increasingly poor eye-sight (severe astigmatism) and turned to
theoretical
work and mathematics instead of observational astronomy;
he published papers in both Astronomy and Mathematics.
Among his mathematical 
contributions are ``Thiele's interpolation formula'' for finding a
rational function taking given values at given points, published in 1909.
He made also contributions to Statistics, 
where he introduced the half-invariants
(later known as cumulants) as well as many other fundamental concepts in his
book from 
1889.
Moreover, he was interested in Actuarial Science, where he both did
theoretical work and was a founder
of the insurance company Hafnia where he was Mathematical Director 
from 1872 until 1901; from 1903 he was the chairman of the board of the
company.
Thiele was an original thinker, and his ideas were often ahead of his time.

Thorvald Thiele 
was one of the founders of the Danish Mathematical Society in 1873,
and
one of the founders of 
the Danish Society of Actuaries in 1901; he was the president of the latter
until his death.
In 1901, he also became a foreign member of the Institute of Actuaries
in London.

See further 
\cite{Lauritzen},
\cite[Thiele, Thorvald Nicolai]{DBL} and
\cite[Thorvald Nicolai Thiele]{MT}.




\section{\phragmen's original formulation}\label{APhragmen1894}
\phragmen{} first presented his method in a short note
\cite{Phragmen1894}, dated 14 March 1894, 
in the Proceedings of the Royal Swedish Academy of Sciences.
The note is written in French, and the method is defined as follows, using
the term \emph{force électrice}:

\smallskip

{\it
Désignons par $k$ une quantité variable, et
\begin{enumerate}
 \renewcommand{\labelenumi}{\textup{\arabic{enumi}:o)}}%
 \renewcommand{\theenumi}{\labelenumi}%

\item 
commen\c cons par donner à la force électrice de tous les bulletins cette
valeur $k$.
\item 
Avant de proclamer l'élection d'un premier représentant, nous étab\-lirons
entre les candidats un certain ordre de préférence, en calculant pour chacun
d'eux la valeur de $k$ qui donne la valeur un à la somme de force électrice
de tous les bulletins qui contiennent son nom, et en donnant toujours la
préférence au candidat pour qui cette valeur est moins grande.
\item 
Nous proclamerons élu pour représentant le candidat qui se trouve en tête de
cette liste, et nous réduirons en même temps la force électrice des
bulletins portant son nom, en y soustrayant la valeur de cette force
électrice qui correspond à l'élection.
\item 
Nous répèterons les opérations 2:o et 3:o alternativement jusqu'à ce que le
nombre prescrit de représentants soient proclamés élus.
\item 
S'il arrive, en faisant l'opération 2:o, qu'il y a deux ou plusieurs
candidats pour lesquels la valeur de $k$ devient égale, on déterminera leur
ordre relatif d'après la liste analogue obtenue à l'opération précédente.
Si, même à la première opération, ils ont la même valeur de $k$, leur place
relative est déterminée par la sort (ou par tout autre moyen qu'on y
préfèrerait). 
\end{enumerate}
}

\phragmen{} gave his method an expanded treatment in the book
\cite{Phragmen1895} the following year;
he also gave a different motivation for the method in 1896
\cite{Phragmen1896} (see \refSS{SSopt})
and made further comments,
including comparisons with Thiele's methods in 1899
\cite{Phragmen1899}. Rule 5:o) above, on ties, seems to have been dropped;
otherwise there are only minor variations (without mathematical significance)
in the formulations; for example, in \cite{Phragmen1899} he tries to make
the method more easily understood by talking about the ``load'' 
a ballot receives by the election of a candidate, instead of ``voting
power'' (see \refR{Rload}).

\section{\phragmen's and Thiele's  methods
as formulated
in current Swedish law}

Both \phragmen's ordered method and Thiele's ordered method are used
officially in Sweden for some purposes, see \refApp{Ahistory}.
We give here official formulations (in Swedish) from  current laws.

The methods are not called ``\phragmen's'' and ``Thiele's'' anywhere 
in the laws;  \phragmen's method has no name in the \EA, but 
in the Parliament Act (Riksdagsordningen) \cite[12 kap 8 \S, 12.8.5]{RO}
it is called
``heltals\-metoden'' (the whole 
number method), which otherwise is the Swedish name for D'Hondt's method.
This is really a misnomer; although the method clearly is related to
D'Hondt's method, and \phragmen{} argued that his method was a
generalization of D'Hondt's, it is in several important
ways different from it.
Moreover, from a mathematical point of view, an important feature is the use
of non-integer place numbers (generalizing the integer ones in D'Hondt's
method), which makes the name ``heltalsmetoden'' a bit bizarre.

\subsection{\phragmen's ordered method in the Swedish \EA}
\label{APh-vallag}

\phragmen's ordered method has been used in the Swedish \EA{}
since 1921 for distribution of seats within each party, although since 1998
only as a secondary method that rarely is used,
see \refApp{Ahistory}.

The official formulation 
in the current (2016) \EA{}
\cite[14 kap.\ 10 §]{vallag}
is as follows.
(The formulations have been essentially identical since 1921.)
\begin{xquote}{18pt}{0mm}\em
%
Vid första uträkningen gäller en valsedel för den kandidat som står först på
sedeln varvid bortses från kandidater som redan tagit plats. Valsedlar med
samma första kandidat bildar en grupp. Varje grupps röstetal räknas
fram. Röstetalet är lika med det antal valsedlar som ingår i gruppen. Samma
tal är också jämförelsetal för den kandidat som står först på gruppens
valsedlar. Den kandidat vars jämförelsetal är störst får den första platsen
i ordningen. 

Vid varje följande uträkning gäller en valsedel för den kandidat som står
först på sedeln, men man bortser från kandidater som redan fått plats i
ordningen. Den eller de grupper, vilkas valsedlar vid närmast föregående
uträkning gällde för den kandidat som fick plats i ordningen, upplöses och
ordnas i nya grupper, så att valsedlar som vid den pågående uträkningen
gäller för en och samma kandidat bildar en grupp. Övriga befintliga grupper
behålls däremot oförändrade. För varje nybildad grupp räknas röstetalet
fram. Röstetalet är lika med det antal valsedlar som ingår i gruppen. För
samtliga kandidater som deltar i uträkningen beräknas röstetal och
jämförelsetal. 

Röstetalet för en kandidat är lika med röstetalet för den grupp eller det
sammanlagda röstetalet för de grupper vilkas valsedlar gäller för
kandidaten. Jämförelsetalet för en kandidat är lika med kandidatens
röstetal, om inte den grupp av valsedlar som gäller för kandidaten deltagit
i besättandet av en förut utdelad plats. Om detta är fallet, får man
kandidatens jämförelsetal genom att kandidatens röstetal delas med det tal
som motsvarar den del gruppen tagit i besättandet av plats eller platser som
utdelats (gruppens platstal), ökat med 1, eller, om flera grupper av
valsedlar som gäller för kandidaten deltagit i besättandet av förut utdelad
plats, med dessa gruppers sammanlagda platstal, ökat med 1. Platstalet för
en grupp beräknas genom att gruppens röstetal delas med det största
jämförelsetalet vid uträkningen närmast före gruppens bildande. För kandidat
som redan stod först på någon valsedel beräknas nytt platstal endast för
nytillkomna valsedlar. Bråktal som uppkommer vid delning beräknas med 2
decimaler. Den sista decimalsiffran får inte höjas. 

Den kandidat vars jämförelsetal är störst får nästa plats i ordningen.
\end{xquote}

The \EA{} specifies that calculations should be done to two decimal
places, 
rounded downwards. 
\xfootnote{This was obviously of practical importance in 1921.
Today, with computers, it would seem better to use exact calculations with
rational numbers.

Already the commission \cite{bet1913} that suggested the method in 1913
proposed that the calculations should be done with decimal numbers, and that
two decimal places would be enough for practical purposes; they also
for simplicity recommended consistently either rounding up or down, but
favoured rounding up for reasons not further explained. The law introduced
in 1921 chose rounding down, but otherwise followed these recommendations.
}
Otherwise, the formulation is equivalent to the one in \refS{SPhro3}.

\subsection{Thiele's ordered method in current Swedish law}
\label{Athi-lag}

Thiele's ordered method is used in Sweden for the distribution of seats within
parties at
elections in city and county councils, for example in the election of the
city executive board and other boards, see \refApp{Awithin}.
The method is formulated as follows in
the Act on Proportional Elections (Lagen om proportionella val)
\cite[15--18 §]{lagprop}:
\begin{xquote}{12pt}{0mm}\em
\begin{itemize}
\item [15 §] 
Ordningen mellan namnen inom varje valsedelsgrupp skall bestämmas genom
särskilda sammanräkningar, i den utsträckning sådana behövs. 

\item[16 §] 
Efter varje sammanräkning skall det namn som enligt 18 § har fått det
högsta röstetalet föras upp på en lista för valsedelsgruppen, det ena under
det andra. Namnen gäller i den ordningsföljd som de har blivit uppförda på
listan. 

\item [17 §] 
Vid varje sammanräkning gäller en valsedel bara för ett namn.

Valsedeln gäller för det namn som står först på valsedeln, så länge detta
namn inte har förts upp på listan. Därefter gäller valsedeln för det namn
som står näst efter det namn som redan har förts upp på listan. 

\item[18 §] 
En valsedel som gäller för sitt första namn räknas som en röst.

När den gäller för sitt andra namn räknas den som en halv röst. En valsedel
som gäller för sitt tredje namn räknas som en tredjedels röst, och så vidare
efter samma grund.
\end{itemize}
\end{xquote}

\section{History and use of \phragmen's and Thiele's methods in 
Sweden\protect
\footnote{%
As far as I know, no version of \phragmen's or Thiele's method has ever been
used outside Sweden.
(Thiele was Danish. I do not know whether his, or \phragmen's, method was
discussed in Denmark.)
}}

\label{Ahistory}

As said above, \citet{Phragmen1894} proposed his method in 1894 
(see \refApp{APhragmen1894}), and \citet{Thiele} as a response proposed his
method 
in 1895.

This was a period when electoral reform was much discussed in Sweden, both
the question of universal suffrage and the election method,
see \eg{} \cite{Carstairs}.
The two questions were linked; the conservatives expected to become a
minority when universal suffrage was introduced, and therefore many of them
wanted a proportional election system; conversely, many liberals
and socialdemocrats wanted a
plurality system with single-member constituencies.

Sweden had 
1867--1970 a parliament with two chambers, where the Second Chamber was
elected in general elections, while the First Chamber was elected
indirectly, by the county councils.
(Until 1909, the county councils were  also elected indirectly, by electors
chosen by city councils and
rural municipality councils;
these councils were elected in local elections,
\xfootnote{Until 1918, in local elections,
each voter had a number of votes proportional to his
  (or in exceptional cases her) tax. There were some limitations, in particular
were the number of votes  limited to at most 100 per voter
in cities and, from 1900, at most 5000 in
  rural municipalities; in 1909--1918 the number of votes per voter were 1--40.
}
and 
thus the First Chamber was elected indirectly in three steps.)
Until 1909, the Second Chamber was elected by the \emph{Block Vote}
(\refApp{ABV}), mainly in single-member constituencies but in larger
cities in constituencies with several members;
only men with a certain income or real estate of a certain value were
allowed to vote.
There were no formal parties, and no registration of candidates before the
election;  every eligible man was a possible candidate, and the voters could
write any names they wanted on the ballots, although parties and other
political organizations recommended certain lists and printed ballots that
were used by most voters. (Even these organized lists often overlapped, see
Examples \ref{E1893a} and \ref{E1893b}.)

In 1896, the government proposed a minor extension of the suffrage (lower
limits for income and real estate), together with introduction of a
proportional method 
in the  constituencies that elected several members of parliament
(\emph{\andrae's method}, a form of STV, see \refApp{ASTV}); however,
the parliament voted against.

Nevertheless, the questions continued to be discussed.
Several bills on universal suffrage for men 
were introduced in parliament and defeated, mainly because of disagreement
on whether the election system should be proportional or not.
A Royal Commission on a Proportional Election Method was appointed in 1902,
with \phragmen{} as one of its members. 
(Nevertheless, the commission 
proposed the cumulative method, see \refApp{ACV}, and not \phragmen's own.)

In 1909, finally, universal suffrage for men was accepted together with 
a proportional election system, based on \emph{D'Hondt's method}
(\refApp{ADHondt}). 
Thus parties were formally introduced; there was still no registration
before the election, but the writer was supposed to write a party name
on the ballot, followed by a list of candidates as before.
\xfootnote{It was until 1921
also possible to omit the party name and just list
  candidates, as previously. These ballots (``the free group'') were treated
  as a separate party, with some special rules. Such ballots were few, and 
they had only rarely (in some local elections) any effect on the outcome.
In the elections to parliament 1911, only 210 votes of 607\,487 were without
a party name. In the county council elections 1910, the free group dominated
in a few constituencies, and in total 7 county councillors (of 1217 in the
whole country) were elected from the free group.
}\,
\xfootnote{
In more recent years, party names and candidates are usually registered
before the election, and ballots are printed by the Election Authority.
However, registration is compulsory only from the general election in 2018;
until now, it has been possible for voters to use blank ballots and fill in
any party name and any candidates.
}
Each party was then, by D'Hondt's method, 
given a number of seats according to the party names on
the ballots.

It seems that parties had now become better organized and dominated the
political scene, and that it was natural to base the election system on
them;
to use a method like \phragmen's or Thiele's to distribute the seats without 
party labels was no longer realistic (as it was when the methods
were proposed some 15 years earlier).
However, the parties were still rather loosely organized and consisted of
various more or less clearly defined factions, and since Sweden kept the old
system of open lists, where the voter could list any candidates on the ballot,
a system was needed for the distribution of seats within the parties.
The method adopted 1909 was \emph{Thiele's unordered method}, but combined with
a special rule, called \emph{rangordningsregeln (The Ranking Rule)}, 
to prevent decapitation (see \refS{Sdecap}):
\xfootnote{
It is easily verified that both the later introduced ordered version of
\phragmen's method and the ordered version of Thiele's method 
obey the Ranking Rule, in the sense that any seat allocated by the Ranking
Rule will be allocated to the same candidate by these nethods too.
}

\begin{metod}{The Ranking Rule}
The ballots are ordered.
If more than half of the ballots for a given party have the same first name,
then this candidate gets the party's first seat.
If further more than $2/3$ of the ballots have the same first and second
names, then the second name gets the second seat, and so on.
I.e., if more than $k/(k+1)$ of the ballots have the same $k$ names first,
in the same order, then these get the first $k$ seats (for any $k\ge1$).  
\end{metod}

The combination chosen in 1909 for distribution of seats within each party
was thus: 

\begin{metodx}
Ballots are ordered. The Ranking Rule is used for as many seats as
possible. 
The remaining seats, if any, are distributed by Thiele's unordered method,
thus ignoring the order of the names on the ballot.
\end{metodx}

In practice, it turned out that most seats were distributed by the Ranking
Rule. For example, in the elections to the parliament in 1911 were only 21
of the 150 members of the First Chamber, and 18 of the 230 in the Second
Chamber, elected by Thiele's method.
\xfootnote{Note that the same election method always was used for the
  general elections to the Second Chamber and for the elections to the First
  Chamber made by the  county councils.}

However, 
the combination was not successful, and there were several examples where
the outcome did not seem fair; all of them coming from the application of
Thiele's method. One important reason seems to have been that the
combination of two rules, where one depended on the order of the names on
the ballot but the other did not, was confusing, and that votes
could have an effect not intended by the voter.
And even when the voter fully understood the method, it was often difficult
to predict how many from the party that
would be elected by the Ranking Rule, and therefore 
whether the addition or
deletion of a name would help or hinder another (more favoured) candidate.
Moreover, the problem was compounded by the fact that
two parties often agreed to an electoral alliance in a
constituency, where they 
would appear under a common party name in the election, but otherwise
promote their own candidates and their own policies; this was possible by
the open nature of the election system, and it was encouraged by D'Hondt's
method, which works in a superadditive way, so that small parties are
somewhat disadvantaged
(see \eg{} \cite{SJ262} for detailed calculations)
and two parties that form an electoral alliance can
together never lose a seat by that.
\xfootnote{
It also happened that a party participated in the election using another
party's name, without any agreement or consent. 
(For example because the first party
did not expect to get enough votes to get a seat on their own.)
}
Since the different names and lists within a ``party'' thus  in
reality often
represented different parties, it was imperative that the
distribution inside each party was fair.
One typical problem is described in \refE{Erank}.

As a result of dissatisfaction with the results of the new method,
a  Royal Commission on the Proportional Election Method
was appointed in 1912 to suggest improvements;
one of the four memebers was Edvard \phragmen.
\xfootnote{The three other members were Sixten von Friesen, a leading liberal
  politician;
Ivar Bendixon, who was \phragmen's successor as professor of
  Mathematics at Stockholm University, and also Vice-Chancellor of the
  University 
  and a local politician in the city council
  of Stockholm; 
Gustaf Appelberg, a civil servant from the department of Justice.
Two of the four members were thus mathematicians.
}
The commission report in 1913 \cite{bet1913}
(with many constructed examples) discusses
several types of errors that could
appear, both because of the combination of the Ranking Rule and Thiele's
method, and because of deficiencies in the latter itself.
They discuss several possible alternatives
\xfootnote{Including two proposed in the Swedish
parliament, a scoring rule (\refApp{Aborda}), and several methods that seem
to have been invented and discussed within the commission.}, 
and come to the
conclusion that the only satisfactory solution is to use ordered ballots.
Moreover, they discuss first the ordered version of Thiele's method (which
had been proposed in parliament in 1912 by Nilson in \"Orebro) but find
serious problems with this method too.
Finally, the commission presents and proposes the ordered version of
\emph{\phragmen's method}.

Nothing came out of this proposal immediately, 
but a new commission repeated the proposal in 1921,
in connection with the reform introducing universal suffrage for both men
and women. This time the method was adopted, 
and  \emph{\phragmen's method} has been used for the distribution of seats
within 
parties since 1921, at least to some extent (see below).
\xfootnote{
The seats were still distributed between parties by D'Hondt's method.
In 1952, this was replaced by the \emph{modified Sainte-Lagu\"e's method} 
(\refApp{AStLmod}). In 1971, the two chambers were replaced by a parliament
with a single chamber; furthermore, adjustment seats were introduced for the
distribution between parties. None of this affected the use of \phragmen's
method within parties.
}

In 1924, the well-established practice of forming electoral alliances
was formalized and alleviated by allowing ballots to contain, besides the
party name also a  alliance name and a faction name; the ballot could thus
be labelled in three levels. (Only the party name was compulsory.)
These possibilities disappeared in 1952, when D'Hondt's method was replaced
by the modified Sainte-Lagu\"e's method, and thus the incentive to form
alliances disappeared.
When ballots with alliance and faction names were used, \phragmen's method
was used recursively in up to three steps, as described in \refS{SSrecur}.

\phragmen's method is still used in the distribution of seats within parties
(see \refApp{APh-vallag}), but it was demoted to a secondary role in 1998,
when a system of preference votes was introduced.
Each party has one or several
ballots, with lists of candidates decided by the party, and the voter 
in addition to choosing a ballot, also  can (but
does not have to) vote for one of the  candidates on the ballot.
The seats given to the party are primarily given to its candidates in order
of these preference votes (\ie, by the \emph{SNTV method}, \refApp{ASNTV}), 
but only to candidates that obtain at least 5\% of the total number of votes
for the party. Only when there are not enough candidates that have reached
this threshold, the remaining seats are distributed according to \phragmen's
method.
\xfootnote{Ignoring the candidates that have got seats because of their
  preference votes. This seems to be a mistake in the combination of the two
  methods. Suppose that a party has two lists ABCD and EFGH in a
  constituency, and get 3 seats. Suppose further that of the ballots for the
  party, 55\% are ABCD and 45\% EFGH, and also that 
A and D, but no-one else, each gets preference votes on more than 5\% of the
ballots. Then A and D are elected by their preference votes, and the
remaining seat goes to the largest list, \ie{} to B. Hence all three elected
come from the same list.
It seems that it would be better to calculate place numbers taking into
account also those elected by preference votes.
}

In most cases, the party chooses to have just one list in a constituency, and
then \phragmen's method only means that any seats not assigned by preference
votes are assigned in the order the party has put them on the ballot.
And  if there are several lists but they have disjoint sets of names, 
\phragmen's method is just the same as D'Hondts.
Thus, \phragmen's method really plays a role  
only in the few cases where the party chooses to have more than one list,
with some of the names on more than one list,
and moreover the voters choose to give not too many preference votes.

Local elections have followed essentially the same rules as
parliamentary elections; in particular, they too have for the distribution
of seats inside parties used
 Thiele's unordered method together with the
Ranking Rule in 1909--1921, and after 1921 \phragmen's ordered method, but since
1998 only for seats not assigned by preference votes.

\subsection{Elections within political assemblies}\label{Awithin}

\emph{\phragmen's method} is  in Sweden,
besides its use in general elections described above,
 also used for elections of committees in the
parliament
 \cite[12 kap.\ 8 §]{RO}.
Also in this case, the method has in practice a very small role;
the members of parliament can be expected to be loyal to their parties and
vote on a party list, and thus the seats a party gets are
assigned according to this list.
Moreover, parties usually form electoral alliances for these elections;
\xfootnote{These elections are performed using D'Hondt's method
  (\refApp{ADHondt}), which has the 
important property that a party or electoral alliance that has a majority in
the parliament will get a majority in each committee. This method is
superadditive and encourages electoral alliances, as said above.
}
each party in the electoral alliance can be expected to vote on its own
party list,  and then the result of \phragmen's method will be that the
seats are distributed within the electoral alliance by D'Hondt's method.
\xfootnote{
In practice the elections are usually done by 
consent  by voice vote, often unanimous, to a proposal from the
nominating committee, so \phragmen's method is not even formally used. 
However, the nominating committee has of course
calculated what the result would be of an election, so the election method 
still plays the same role, although hidden.
}

 In local councils (cities and other municipalities, and counties),
elections to various committees are instead done by 
\emph{Thiele's ordered method} \cite{lagprop}.
\xfootnote{
I do not know the reason why different methods are used in parliament and in
local councils. One possibility could be that it has been thought that the
parliament is more important and that it is justified to use the more
complicated but usually better \phragmen's method there, while local
councils can do with the simpler Thiele's method. However, this is only a
guess.
In any case, the choice of method very rarely matters, since as said above,
the votes can be expected to be on party lists, and then the result
 for both methods will be the same as by D'Hondt's method.

Note that a political assembly, where everyone can predict how everyone else
will vote, is an ideal setting for tactical voting as in \refE{Etactic} when
Thiele's method is used. However, this only works inside an electoral
alliance, and I do not know that anyone has ever tried it -- it would hurt
your partners and not your political enemies, and would presumably lead to
bad feelings and repercussions. 
}
Again, the method has in practice a very minor role, for the same reasons as
in the parliament.

\section{Some other election methods}\label{Aother}
We give here brief 
(and incomplete)
descriptions of some other election methods, as a
background to
the methods by \phragmen{} and Thiele
and for comparisons with them.
The selection of methods is, by necessity, far from complete; many other
election methods have been proposed and many different ones have actually
been used in different countries and contexts. 
Note also that the same method often is known under  different names.
The methods described below are mainly those of relevance to a discussion 
of \phragmen's and Thiele's methods.
For further methods and further
discussions, from both mathematical and political aspects, see
\eg{}
\cite{Farrell,Politics,Pukelsheim}.
We give some examples (far from exhaustive) of current or earlier use in
elections. 
For the election methods actually used at present
in parliamentary elections around the
world, see
\cite{IPU}.

Note the important difference between elections methods that are 
\emph{proportional}, \ie, methods where different parties get numbers of
seats that are (more or less) proportional to their numbers of votes,
and other methods (that usually favour the largest party).
\xfootnote{
Whether an election method is proportional or not is not
  precisely defined, even when there are formal parties.
Obviously, there are necessarily deviations since the number of seats is
an integer for each party.
Moreover, in practice,
whether the outcome of an election is (approximatively)
proportional depends not only on the method, but also on the number of seats
in the contituency, and on other factors such as the number of parties and
their sizes.
}
Proportional election methods are used for political elections
in many countries; in most (but not all) cases using a list method with
party lists, see Secction \ref{Alist} below.
Recall that the idea of both \phragmen's and Thiele's methods is to have a
proportional election method without formal parties.

As above, $\MM\ge1$ is the number to be elected (in a constituency).
We are (as \phragmen{} and Thiele) mainly interested in the case $\MM\ge2$
(multi-member constituencies), but the case $\MM=1$ is often included for
comparison. 

\subsection{Election methods with unordered ballots}\label{Aotheru}

In these method, each ballot contains a list of names, but their order is
ignored. 
The number of candidates on each ballot is, depending on the method,
either arbitrary or limited to at most (or exactly) a given number, for
example $\MM$, the number to be elected.

\subsubsection{Block Vote}\label{ABV}
\emph{Each voter votes for at most $\MM$ candidates. The $\MM$ candidates with the
largest numbers of votes are elected.}
\xfootnote{One version  requires each voter to vote for exactly $\MM$
  candidates. 
This led to a scandalous result in Sweden in the (autumn) election in 1887: 
Stockholm was a single
constituency with 22 seats (the by far largest constituency). After the
election, it was found that one of the 22 that had been elected was not
eligible, because he had unpaid back taxes. As a consequence, all 6206
ballots with his name were deemed to contain only 21 valid names, and were
therefore invalid. As a consequence, all 22 elected (that had received
4898--6749 votes) were replaced by others (that had received 2585--2988
votes)
\cite[pp.~45, 51]{SCB1885-1887}.
(This changed the majority in the parliament to the
protectionists, and as a result the government resigned.)
}

The Block Vote is the case $\MM\ge2$, the multi-member constituency version
of the  case $\MM=1$, which is the widely used \emph{Single-Member Plurality}
system, also called \emph{First-Past-The-Post}, where each voter votes for
one candidate, and the candidate with the largest number of votes wins.
Single-Member Plurality is the perhaps simplest possible
election method and has a long history, and it is still used in many
different  contexts.

Also the Block Vote (with $\MM\ge2$)
is simple and intuitive and has been used for a very long
time. 
(For example, it has been used since the Middle Ages
in English parliamentary elections, where
until 1885 most constituencies had 2 seats.
\cite[p.~158]{Politics})
Until the late 19th century, it seems to have been the dominant
election method for elections in multi-member constituencies.
It was the method used in Sweden when \phragmen{} and Thiele proposed their
methods.

The Block Vote is still used in general elections in some countries.
It is also widely used in non-political elections, for example in various
organizations and associations. (For example, in elections to department
boards at my university).

The Block Vote is well-known for not being proportional; when there are
organized parties, the largest party will get all seats.
Hence, many other election methods have been invented in order to get a
proportional result, among them the methods by \phragmen{} and Thiele
discussed in the present paper.

\subsubsection{Approval Voting}\label{Aapproval}
\emph{Each voter votes for an arbitrary number of candidates. The $\MM$
  candidates with the largest numbers of votes are elected.}

Here $\MM\ge1$.
In a system with well-organized parties, Approval Voting 
ought to give the same result
as the Block Vote (or Single-Member Plurality when $\MM=1$), see
\refApp{ABV}, with each party fielding $\MM$ candidates 
and their voters voting for these. 
In particular, the method is not proportional.
(Without parties, the result may be quite different from the Block Vote.)

This is also an old method, but seems to have been used much less
frequently than the Block Vote.
For example,
it was used (with $\MM>1$)
for parliamentary elections in Greece 1864--1926, and it is
used for distribution of seats within parties in some local elections in
Norway. 
\xfootnote{Versions requiring a qualified majority were used alread to elect
  the Pope 1294--1621, and the Doge (Duke) of Venice 1268--1797, in both
  cases (obviously) with  $\MM=1$.
}
As noted in \refSS{SSthiuopt}, the method is one of the methods
proposed by \citet{Thiele} in 1895,
viz.~ his ``strong method''.
The method was reinvented (and given the name Approval Voting) by Weber 
c.~1976
and was made widely known by for example
\citet{BramsFishburn-book};
see further \citet{Weber} and
\citet{BramsFishburn-mixed}.
Approval Voting
seems to be  used mainly in non-political elections, for example it is used
in several professional
organizations. 

\subsubsection{Limited Vote}\label{ALV}
\emph{Each voter votes for at most $\ell$ candidates, where $\ell$ is some
  given   number with $1<\ell<\MM$. 
The $\MM$ candidates with the
largest numbers of votes are elected.}
\xfootnote{As for the Block Vote, one version  requires each voter to
  vote for exactly $\ell$   candidates.} 

The idea of this modification of the Block Vote
is that a small party can concentrate its votes on (for example)
$\ell$ candidates and obtain at least some seats in the competition with a
larger party that spreads its votes over more candidates. This introduces a
degree of proportionality, but it is far from perfect. 
If $\ell> \MM/2$ (which in practice usually is the case), and the parties
are similar in size, the expected result of optimal voting strategies is
that the largest party gets $\ell$ seats, and the second largest the
remaining $\MM-\ell$ seats, while the other parties (if any) get none.

Moreover, the outcome
of an election depends heavily on how the votes of a party are distributed
inside the party. Hence the method makes it possible, and more or less
necessary, for a party that wants to win many seats,
to use schemes of tactical voting.
See \eg{} \cite[Section A.9]{SJV6} for the optimal strategy, and its result,
in an 
ideal situation when all voters follow the instructions from their parties,
and, moreover, the parties accurately know  in advance  the number of votes
for each party.

\citet{Phragmen1893} mentions Limited Vote as an example of an
unsatisfactory election method because of the possibilities of tactical
voting, saying that by dividing the votes between several lists,
a majority could get all seats, also against a rather strong
minority,
and that
this actually has happened in Spain (where the method was used since
1878).
Another well-known example of this was England,
where 
Limited Vote was used in some constituencies in
1867--1885, and the Liberal Party in Birmingham was very successful
in organising the voters so that they got all seats in each election;
this also gave the method a bad reputation
\cite[p.~193]{Carstairs}, \cite[p.~27]{Farrell}. 

Limited Vote 
was proposed by 
\citet[Titre III, 
Section première, 
Article 4]{Condorcet:constitution}
already 1793 (for some elections in
the French republic; the proposal was not adopted), 
and it is still used in a few places.

Note that 
Limited Vote properly means the case when $\ell<\MM$, but if we ignore this
and allow an arbitrary $\ell$, then
Block Vote (\refApp{ABV}) is the special case $\ell=\MM$ and Approval Voting
(\refApp{Aapproval})
is the case $\ell=\infty$.

\subsubsection{Single Non-Transferable Vote (SNTV)}\label{ASNTV}
\emph{Each voter votes for one candidate.
The $\MM$ candidates with the
largest numbers of votes are elected.}
This can be seen as the special case $\ell=1$ of Limited Vote
(\refApp{ALV}),
and it has the same weaknesses as Limited Vote.
In particular, the outcome depends heavily on how the votes are distributed
inside the parties; note that a party can get hurt by too much concentration
of votes as well as by too little concentration.
In an ideal situation where all voters belong to parties 
and vote as instructed by their party, 
and, moreover,
the parties accurately know  in advance  the number of votes
for each party,
it can
be shown that optimal strategies yield the same result as D'Hondt's method,
see \cite[Appendix A.8]{SJV6}.
In a theoretical, and restricted, sense, SNTV can thus be regarded as a
proportional election method. However, just as Limited Vote (\refApp{ALV}),
it requires elaborate voting tactics, and bad tactics, \eg{} based on
incorrect guesses of the party's strength, can lead to
disastrous results.
\citet[p.~67]{Phragmen1895} discusses SNTV and regards it as unsatisfactory.

SNTV is an ancient method, and was mentioned already by Plato
\cite[book VI]{Plato}. 
It was analysed mathematically by Charles Dodgson
\xfootnote{Better known by his pseudonym Lewis Carroll used when writing his
  famous children's classics.}
\cite{Dodgson} in 1884.
It was used already in 1835 in North Carolina, 
and it is still used in a few
countries. However, it seems to have been much
less common than the Block Vote.

\subsubsection{Cumulative Voting}\label{ACV}
The idea is that a voter may divide his or her vote between several
candidates.
The perhaps the most common version is:

 \emph{Each voter has $\ell>1$ votes, that can be given to one or several
   candidates.}
(This seems to be
the oldest version of Cumulative Voting, used \eg{}
in the Cape Colony 1853--1909.)
See \eg{} \citet{Droop}.

Another version of Cumulative Voting, 
sometimes called \emph{Equal and Even Cumulative Voting}
\xfootnote{This version is
called \emph{Satisfaction Approval Voting} by 
\citet{BramsKilgour}.
}
uses unordered ballots:

\emph{Each voter votes for an arbitrary number of candidates.
A ballot with $m$ names is counted as $1/m$ votes for each candidate.}
The number of names on a ballot may be restricted, for example to at most
$\MM$, the number of seats.
This method is at present used in Peoria in Illinois, USA (with 5 seats and
at most 5 
names on each ballot).
In Sweden, it was proposed  by Rosengren in 1896
\cite[pp.~14--16]{Flodstr}, 
\cite[pp.~23--24]{Cassel}
and (for distribution of seats within parties)
by a Royal Commission
1903 \cite{bet1903} and then
in bills by the government in 1904 and 1905,
but it was never adopted by parliament.

\citet[p.~68]{Phragmen1895} discusses Cumulative Voting, saying that it
is similar to SNTV and has essentially the same deficiencies, and therefore
``should be seen as a theoretical experiment without practical usefulness''.

\subsection{Election methods with ordered ballots}\label{Aothero}

In these method, each ballot contains an ordered list of names.
The number of candidates on each ballot is
usually arbitrary, but may be limited to at most, or at least, or exactly
some given number, for 
example $\MM$, the number to be elected;
some versions require each voter to rank all candidates.
(In the latter case, 
each ballot can be seen as a permutation of the set of candidates.)
\xfootnote{In elections to the Australian Senate, all candidates have to be
  ranked. This is rather impractical, so
since 1983, each party makes a list ranking all candidates (its own and
all others), and the voter
has the option of voting for one of the party list.
This is used by the vast majority of voters, so the system in reality has
become a 
list system. \cite[pp.~140--141]{Farrell} 
}

\subsubsection{Single Transferable Vote (STV)}\label{ASTV}
{ \em First, a quota is calculated, nowadays almost always the Droop quota
   (rounded to an integer or not), see \refS{Alist} below.
Each ballot is counted for its first name only
(at later stages, ignoring candidates that have been elected or eliminated).
A candidate whose number of
votes is
at least the quota is elected; the surplus, \ie, the ballots
exceeding the quota, are transferred to
the next (remaining) name on the ballot.
This is repeated as long as some unelected candidate reaches the quota.

If there is no such candidate, and not enough candidates have been elected,
then the candidate with the least number of votes is eliminated, and the 
votes for that candidate are transferred to the next name.

At the end, if the number of remaining candidates equals the number of
remaining seats, then all these are elected (regardless of whether  they
have reached the quota or not).}

This description is far from a complete definition; many details are
omitted, and can be filled in in different ways, see \eg{} \cite{Tideman}. 
As a result, STV is a
family of election methods rather than a single method. (There seems to be
hardly no two implementations that agree in all details.)
The most important differences between different version are in the treatment
of the surplus, but also other details, such as the order of transfer when
more than one candidate has reached the quota, 
the treatment of ballots where all names have been elected or eliminated,
and different rounding rules,
can affect the final outcome.
(See  \refE{EPhragmen-stv} for one example.)

The special case $\MM=1$, when only a single candidate is elected, is usually
called \emph{Alternative Vote}. This case is much simpler than the general
case, since no surplus has to be transferred, so the method only involves
eliminations until someone has reached the quota, which in this case means a
majority of the votes.

In the party list case (see \refS{Sparty}), STV yields the same result as a
quota method with the same quota (see \refS{Alist}), i.e.~in almost all
cases Droop's method (\refS{ADroop}).
(This is easily seen using the formulation in \refFn{fQuota}.)
STV is therefore regarded as a proportional election method.

The eliminations are an important part of the method. 
As a result of them,
even if the voters of a party split their votes by voting on the party's
candidates in different orders, so that no-one of them 
gets many votes in the first round (when only the first name is counted), 
as the counting progresses and 
weaker candidates are eliminated the votes will concentrate on the remaining
ones. Hence vote splitting within a party will typically not affect the
outcome, unlike with \phragmen's and Thiele's ordered methods.
\xfootnote{However, eliminations introduce also problems with
  non-monotonicity, see Footnote~\ref{f-monoSTV}.
}

STV is used, in different versions, in some (but rather few) countries, 
for example in all elections in Ireland
and in some in Northern Ireland, Scotland and Australia; 
it is the most common proportional election method in English-speaking
countries. 

There are two main groups of different methods
of  handling the
surplus when a candidate is elected: 
either some ballots (in number equal to the quota) are set aside, and the
others are transferred, or all ballots are transferred but with reduced
values, so that the total voting value is reduced by the quota.
\xfootnote{Methods of the latter type are often called \emph{Gregory
	methods}, after J.B. Gregory who proposed one such method in 1880.}
In the first case it has to be decided how to chose the ballots to be
transferred, and in the second case it has to be decided how to reduce the
values, and there are several versions in both cases.

Some of the methods to do the transfers of surplus from an elected candidate
are as follows
(again omitting some details),
see further \eg{} \cite{Farrell:1983}, \cite{Tideman}. 
In the descriptions, the quota
is denoted by 
$Q$, and we assume that a candidate just has been elected with $v$ votes;
the surplus is thus $v-Q$.
(``The last group of ballots transferred to a candidate'' means all ballots
counted for the candidate if there has not yet been any transfers to the
candidate.) 

\begin{romenumerate}
\item\label{STV-Cincinnati}
\emph{Random selection (the Cincinnati method).
$v-Q$ ballots are selected at random from the ballots counted for the
  elected candidate.
}

\item\label{STV-Ireland} \emph{Random stratified selection (Ireland).
The ballots to be transferred are taken from the last group of ballots
transferred to the elected candidate. 
These ballots are sorted according to
their next name, and a proportional number is selected for each next name.
The transferred ballots are selected as the ones last added to their group,
i.e., in practice they are selected randomly in each group. (The ballots are
 put in random order before the counting starts.)%
}
The surplus is thus transferred proportionally to the next name, but if
also that candidate is elected, the next name to transfer to has been chosen
by random selection.

This method is used in Ireland for the lower house (D\'ail) of the parliament.

\item \emph{The Gregory method.
All ballots in the last group of ballots
transferred to the elected candidate are transferred.
If there are $N$ ballots in this group, each of them is now counted as 
$(v-Q)/N$ votes.%
} 
This version was introduced in Tasmania in 1907 and has been used there
  since then. The Irish Senate uses a minor variation where $N$ is the
  numebr of ballots in the group that are transferable, i.e., have some
  remaining candidate on them.

\item\label{STV-inclusive} 
\emph{The inclusive Gregory method.
All ballots counted for the elected  candidate are transferred.
If there are $N$ ballots in this group, each of them is now counted as 
$(v-Q)/N$ votes.}
This is used \eg{} for the Australian Senate.

\item\label{STV-weighted}
 \emph{The weighted inclusive Gregory method.
Each ballot has an initial value of $1$ vote.
All ballots counted for the elected  candidate are transferred.
Each of them has its current value multiplied by
$(v-Q)/N$.%
}

This method was invented by \phragmen{} in c.~1893
(see also \refFn{fEnestrom}),
although for a version
of STV with unordered ballots and no eliminations, see \refSS{SSPh1}.
It seems to have been reinvented in 1986 
by the Proportional Representation Society of Australia, see
\cite{Farrell:1983}.

The method is used in Western Australia and in local elections in Scotland.
\end{romenumerate}

The oldest version of STV, proposed by 
Carl \andrae{} in 1855 and used in some elections in Denmark 1856--1953, is
called \emph{\andrae's method}.
It can be seen as a primitive version; it differs from later versions in
that there are no eliminations.
\xfootnote{Eliminations were introduced in STV by Thomas Hare in 1865,
who had proposed earlier forms of STV in 1857 and 1859.
Thomas Hare was not the first to propose STV, since \andrae's method then
already was in use,
but he developed and promoted STV and is seen as the founder of
STV in its modern forms. For  the early history of STV, see further
\cite{Tideman}.} 

\begin{metod}{\andrae's method}
The Hare quota 
\xfootnote{In this case a misnomer, since \andrae{} invented the method, with
  the quota, before Hare's
  writings.}
(see \refApp{Alist})
is used.
The votes are counted in random order. Any candidate that reaches the quota
is elected, and ignored in the sequel. If not enough are elected when all
ballots are counted, then the
candidates that now have the largest number of votes are elected, provided
they have more than half the quota; if there still are remaining seats, the
votes are counted again, and if there are $m$ remaining seats, then each
ballot is now counted as a vote for the $m$ first candidates that not
already have been elected. The $m$ candidates with most votes are elected.
\end{metod}

This method was proposed for Sweden in 1896 by the government, but it was
not adopted.

\phragmen{} proposed a modification of \andrae's method,
which (unlike all other versions of STV) used \emph{unordered ballots},
see \refSS{SSPh1}.

\subsubsection{Bottoms-up}\label{Abottom}
\emph{
Only the first name on each ballot is counted.
The candidate with the least number of votes is eliminated.
This is repeted (with eliminated names ignored) until only the desired
number $\MM$ of candidates remain; these are elected.}

This can be seen as STV (\refS{ASTV}) with the quota $\infty$,
so that no-one reaches the quota. (Hence there  is never any surplus to
consider.) 
When only one is elected, this gives the same result as Alternative Vote
(see \refS{ASTV}).

This very minor method was used in local elections in some
municipalities in South Australia 1985--1999.

Bottoms-up is mentioned here because it might be viewed as an ordered version
of Thiele's elimination method (\refSS{SSthi-}).
However, when $\MM>1$, it will in the party list
case (see \refS{Sparty})
\emph{not} give the same result as D'Hondt's method. 
Actually, in the party list case where everone from the same party votes for
the same ordered list, Bottoms-up will behave
strangely, since if there are at least $\MM$ parties,
everyone except the first candidate from each party will be
eliminated, and thus the seats will go to the $\MM$ largest parties, with one
seat each. This is the same outcome as Thiele's elimination method with the
weak satisfaction function \eqref{fweak}.
On the other hand, if instead the voters for each party vote 
are uniformly split
between all possible orderings of the candidates of their party, then
Bottoms-up will give the same result as Thiele's elimination method (which
has unordered ballots and thus ignores the order).

\subsubsection{Scoring rules (Borda methods)}\label{Aborda}
\emph{Each ballot is counted as $p_1$ votes for the first name, $p_2$ for
  the second name, and so on, according to some given non-increasing
  sequence $(p_k)_{k=1}^\infty$ of non-negative numbers.}

Several different sequences $(p_k)_k$ are used.
The method was suggested 
(for the election of a single person)
by Jean-Charles de Borda in 1770 \cite{Borda} 
with 
$p_k=n-k+1$ where $n$ is the number of candidates, i.e., the sequence
$n,n-1,\dots,1$ (his proposal required each voter to rank all candidates);
the same method was proposed already in 1433 by Nicolas Cusanus for election
of the king (and thus emperor, after being crowned by the pope) of the Holy
Roman Empire \cite{McLean-Borda,Pukelsheim:Cusa}. 
This scoring rule
is called the \emph{Borda method}; more generally, any scoring rule is
sometimes called a Borda method.

Another common choice of scoring rule uses the harmonic series, $p_k=1/k$.
This was
proposed in 1857 by Thomas Hare (who later instead developed and proposed
STV, \refS{ASTV});
it was used for parliamentary elections in Finland 1906--1935 (within parties),
and is currently used 
in Nauru. 
It was one of several methods discussed (and rejected) by the Swedish  Royal
Commission that instead proposed \phragmen's ordered method \cite{bet1913},
see \refApp{Ahistory}.
This version gives in the
party-list case (see \refS{Sparty}) the same result as D'Hondt's method, so
it may be regarded as a proportional method. 
(See further \cite{RamirezPalomares}.)
However, tactical voting where
the voters of a party vote on the same name in different orders may give
quite different results.

An important difference from \phragmen's and Thiele's methods and STV is
that with a scoring rule, also the order of the candidates below the first
unelected may influence the outcome. 
In particular, with a scoring rule,
adding names after the favourite candidate may decrease the
chances of the favourite.

Moreover, scoring rules invite to tactical voting, especially since the
ordering of all of the candidates is important.
\xfootnote{
Borda is said to have
commented this with: ``My scheme is only intended for honest men''.
\cite[pp.~182, 238]{Black}.
}

\subsection{Election methods with party lists}\label{Alist}
As said above, among the proportional election methods, the ones that
are most often used are party list methods, where the voter votes for a
party and the seats are distributed among the parties according to their
numbers of votes.
\xfootnote{
The most important proportional method of another type is STV, Section
\ref{ASTV}.
} 
The seats a party obtains then are distributed to candidates, either
according to a list made in advance by the party (\emph{closed list}), 
or in some way that more
or less is decided by the voters (\emph{open list}), see \refApp{Ahistory}
for examples from Sweden.

Conversely, most (but not all) party list methods that are used in practice,
including the ones below, are proportional, so that each party gets a
proportion of the  seats that approximates its proportion of votes.

Many different list methods are described in detail in \eg{} \citet{Pukelsheim};
see also \citet{BY} for the 
mathematically (but not politically) equivalent problem of
allocating the seats in the US House of Representatives proportionally among
the states.
\xfootnote{There, until 1941, the method was decided after each census,
so the choice of method was heavily influenced by its result.
Moreover, the number of seats was not fixed in advance and thus
also open to negotiations.
}

There are two major types of party list methods: \emph{divisor methods} and
\emph{quota methods}.

In the traditional formulation of divisor methods, 
seats are allocated
sequentially. For each seat, the number of votes of each party is reduced by
division with a \emph{divisor} that depends on the number of seats already
given to the party; the party with the highest quotient gets the seat.
\xfootnote{The quotient may be called  an \jfr; 
the Swedish term is \emph{j\"amf\"orelsetal} (comparative figure).}\,
\xfootnote{\label{fDivisor}%
A different, but equivalent, formulation is that
the number of seats for each party is obtained by selecting a number $D$
and then giving a party with $v_i$ votes $v_i/D$ seats, rounded to an
integer by some general rounding rule (different for different methods),
where $D$ is chosen such that the total number of seats given to the parties
is $\MM$.
See \citet{Pukelsheim} for a details and examples.
Similar formulations have also been used in the United States for
allocating the seats in the House of Representatives to the states
\cite{BY}.
The number $D$ is also called \emph{divisor}; it can be interpreted as roughly
the number 
of votes required for each seat. (Just as the quota in quota methods; the
difference is that the number $D$ is not determined in advance by a simple
formula, but comes out as a result of the calculations.)
}

In a quota method, first a \emph{quota} $Q$ is calculated; this is roughly the
number of votes required for each seat. The two main quotas that are used
are  the \emph{Hare quota} $V/\MM$ and the \emph{Droop quota} $V/(\MM+1)$
\cite{Droop}, where $V$
is the total number of (valid) votes and $\MM$ is the number of seats
(\cf{} \refR{RDroop});
the quota is often rounded to an integer, either up, or  down, or to the nearest
integer (see \eg{} \cite{Pukelsheim} and \cite{SJV6} for examples with
different, or no,  rounding).
The method then gives a party with $v_i$ votes first $\floor{v_i/Q}$ seats
(i.e., the integer part of $v_i/Q$); the remaining seats, if any, are given
to the parties with largest remainder in these divisions.
\xfootnote{\label{fQuota}%
An equivalent definition is that seats are distributed
  sequentially, as just described for divisor methods, but now the number of
  vote for a
  party is reduced by subtracting $Q$ for each seat that the party already
  has got. (This shows the close connection with STV, Section \ref{ASTV}).
}

\subsubsection{D'Hondt's method}\label{ADHondt}
\emph{The divisor method with the sequence of divisors $1,2,3,\dots$.}
Thus, the \jfr{} of a party equals the number of votes
divided by $1+$ the number of seats already obtained.
(The method is sometimes called \emph{highest average}, 
since the \jfr{} is the average number of votes per seat
for the party, if the party receives the next seat. However, this name is
also sometimes used as a term for all divisor methods.)

D'Hondt's method is one of the oldest and most important proportional list
methods. 
It was
proposed by Victor D'Hondt in 1878 \cite{DHondt1878,DHondt1882}.
D'Hondt's method is equivalent to \emph{Jefferson's method},
proposed by Thomas Jefferson in 1792 for 
allocating the seats in the US House of Representatives proportionally among
the states, see \cite{BY}.
\xfootnote{\label{fUSA}%
Jefferson's method is formulated
as in Footnote \ref{fDivisor}, with rounding downwards.
However, for the allocation of seats in the House of Representatives, 
the total number was not fixed in
advance; a suitable divisor $D$ was chosen by congress,
which makes the method somewhat different.
}

It is easy to see that D'Hondt's method slightly favours larger parties
(see \cite{SJ262} for a detailed calculation of the average bias).
In particular, the method is \emph{superadditive} in the sense that if two
parties merge to one (bringing all their voters with them to the new party,
while all other parties keep the same number of votes),
then the combined party will get at least as many seats as the two parties
had together. 
\xfootnote{The method thus discourages party splits, which was seen as an
  important positive feature of it in the discussions in Sweden in the early
  20th century.}
Moreover (and as a corollary), a party that gets a majority of
the votes will get at least half the seats.

In Sweden, D'Hondt's method was used 1909--1951, see \refApp{Ahistory}.

\subsubsection{Sainte-Lagu\"e's method}\label{AStL}
\emph{The divisor method with the sequence of divisors $1, 3, 5,\dots$.}
This makes it easier that D'Hondt's method for small parties to get a seat,
and the method is perfectly proportional in an average sense. (See e.g.\
\cite{SJ262}.) 

The method was proposed in 1910 by \citet{StL}.
It is equivalent to \emph{Webster's method},
proposed by Daniel Webster in 1832 (and used at least in 1842 and 1911) for
allocating the seats in the US House of Representatives proportionally among
the states, see \cite{BY}.
\xfootnote{
Webster's method is formulated
as in Footnote \ref{fDivisor}, with rounding to the nearest integer.
However, Footnote \ref{fUSA} applies here too.}
Sainte-Lagu\"e's method is used in several countries, e.g.\ Germany.

\subsubsection{Modified Sainte-Lagu\"e's method}\label{AStLmod}
\emph{The divisor method with the sequence of divisors $x, 3, 5,\dots$,
  where $x>1$ is a constant, usually $1.4$.}
This differs from
Sainte-Lagu\"e's method only in that the first divisor is larger than 1.
The method was introduced in Sweden in 1952 with the first divisor 1.4, and
this has also been used in a few other countries.
(From the next general election in Sweden, in 2018, the first
  divisor will   be 1.2 instead \cite{vallag}.)

The modification 
makes it more difficult than the standard Sainte-Lagu\"e's method for small
parties to get the first seat,
but it does not affect the distribution of seats between parties that all
have seats.

In Sweden, the modified Sainte-Laguë method has been  used since 1952, see
\refApp{Ahistory}. 

There is no special mathematical reason for the divisor 1.4; it was chosen
in Sweden 1952 because it was expected to give a politically desired result,
see \cite{vSydow}.
In the present Swedish system with adjustment seats, the first divisor does
not affect the total distribution of seats, but it affects the distribution
between constituencies within the parties, and the divisor was changed to
1.2 because simulations shows that this would 
give the best result with the present constituencies and party structure.

\subsubsection{Method of greatest remainder  (Hare's method)}\label{Avalkvot}
\emph{The quota method using the Hare quota, see above}.

The method is
perfectly proportional in an average sense. (See e.g.\ \cite{SJ262}.)

In Sweden, the method has not been used in elections, but it is since 1894
used for the distribution of seats between constituencies (before the
election, based on the
population or on the number of eligible voters).

\subsubsection{Droop's method}\label{ADroop}
\emph{The quota method using the Droop quota, see above.}

The method slightly favours larger parties (but not as much as D'Hondt's
method).
(See e.g.\ \cite{SJ262}.)

\newcommand\vol{\textbf}
\newcommand\jour{\emph}
\newcommand\book{\emph}
\newcommand\inbook{\emph}
\def\no#1#2,{\unskip#2, no. #1,} 
\newcommand\toappear{\unskip, to appear}

\newcommand\arxiv[1]{\texttt{arXiv:#1}}
\newcommand\arXiv{\arxiv}

\def\nobibitem#1\par{}

\newcounter{bibrub}
\newcommand\bibrub[1]{\stepcounter{bibrub} \medskip 
  \centerline{\textsc{\Alph{bibrub}. #1}} 
  \medskip}
\newcounter{bibsubrub}[bibrub]
\newcommand\bibsubrub[1]{\stepcounter{bibsubrub} \medskip 
  \centerline{\textsc{\Alph{bibrub}\arabic{bibsubrub}. #1}} 
  \medskip}

\newcommand\urldag[1]{(#1)}
\newcommand\urlq[2]{\url{#1} \urldag{#2}}
\newcommand\xurl{\\\url}
\newcommand\xurlq{\\\urlq}


\end{document}